\documentclass[10pt,a4paper,american]{amsart}
\usepackage[left=3.5cm, right=3.5cm]{geometry}
\usepackage[american]{babel}
\usepackage{amsthm}
\usepackage{amsbsy}
\usepackage{amstext}
\usepackage{amsfonts,amsmath,amsthm,latexsym,amssymb,marvosym,esint,xfrac,bbm,amssymb}
\usepackage{graphics, epsfig}
\usepackage{color}
\usepackage{tikz}
\tikzstyle{mybox} = [draw=black, very thick, rectangle, rounded corners, inner ysep=5pt, inner xsep=5pt]
\usepackage{pgfplots}
\usepackage[shortlabels]{enumitem}
\usepackage{appendix}
\usepackage{amsfonts}
\usepackage{mathcomp}
\usepackage{dsfont}
\usepackage{latexsym}
\usepackage{amssymb}
\usepackage{esint}
\usepackage{stmaryrd}
\usepackage{comment}
\usepackage[colorlinks,pdfpagelabels,pdfstartview = FitH,bookmarksopen = true,bookmarksnumbered = true,linkcolor = blue,plainpages = false, hypertexnames = true,citecolor = red,pagebackref=true]{hyperref}
\usepackage{cite}
\usepackage{etoolbox}

\newtheorem{theo}{Theorem}[section]
\newtheorem{cor}[theo]{Corollary}

\newtheorem{lem}[theo]{Lemma}
\theoremstyle{definition}
\newtheorem{defin}[theo]{Definition}
\newtheorem*{lem*}{Lemma}
\newtheorem{rem}[theo]{Remark}

\newtheorem*{cor*}{Corollary}
\newtheorem*{theo*}{Theorem}

\newcommand{\norm}[1]{\lVert#1\rVert}

\DeclareMathOperator*{\esssup}{ess\,sup}
\DeclareMathOperator*{\essinf}{ess\,inf}

\DeclareMathOperator*{\supp}{supp}
\DeclareMathOperator*{\sgn}{sgn}

\def\XXint#1#2#3{{\setbox0=\hbox{$#1{#2#3}{\int}$ }
\vcenter{\hbox{$#2#3$ }}\kern-.6\wd0}}

\def\Xiint#1{\mathchoice
{\XXiint\displaystyle\textstyle{#1}}%
{\XXiint\textstyle\scriptstyle{#1}}%
{\XXiint\scriptstyle\scriptscriptstyle{#1}}%
{\XXiint\scriptscriptstyle\scriptscriptstyle{#1}}%
\!\iint}
\def\XXiint#1#2#3{{\setbox0=\hbox{$#1{#2#3}{\iint}$ }
\vcenter{\hbox{$#2#3$ }}\kern-.55\wd0}}

\def\dashiint{\Xiint {\rule{6mm}{0.5pt}}}
\renewcommand{\d}{\:\:\!\!\mathrm{d}}
\newcommand{\N}{\ensuremath{\mathbb{N}}}
\newcommand{\R}{\ensuremath{\mathbb{R}}}

\newcommand{\K}{\mathbb{K}}

\newcommand{\A}{\mathcal{A}}
\newcommand{\DD}{\mathcal{D}}

\renewcommand{\b}{\mathfrak{b}}

\numberwithin{equation}{section}
\allowdisplaybreaks

 \makeatletter
\patchcmd{\@setaddresses}{\indent}{\noindent}{}{}
\patchcmd{\@setaddresses}{\indent}{\noindent}{}{}
\patchcmd{\@setaddresses}{\indent}{\noindent}{}{}
\patchcmd{\@setaddresses}{\indent}{\noindent}{}{}
\makeatother

\begin{document}
\renewcommand{\refname}{References}
\renewcommand{\abstractname}{Abstract}

\title[Boundedness and Optimal Support]{Boundedness, ultracontractive bounds and optimal evolution of the support for doubly nonlinear anisotropic diffusion}
\date{\today}
\subjclass[2010]{35B65, 35D30, 35K10, 35B45, }
\keywords{Doubly nonlinear parabolic equations, Anisotropic equations, Finite Speed of Propagation, Ultracontractive Bounds, Semicontinuity, Local boundedness.}

\author[S. Ciani]{Simone Ciani}
\address{Simone Ciani,
Department of Mathematics, University of Bologna Alma Mater,
Piazza di Porta S. Donato, 5, 40126 Bologna, Italy}
\email{simone.ciani3@unibo.it}

\author[V. Vespri]{Vincenzo Vespri}
\address{Vincenzo Vespri,
Università di Firenze Dipartimento di Matematica ed Informatica "Ulisse Dini",
Viale Morgagni 67/a, 50134 Firenze, Italy,
Member of G.N.A.M.P.A. (I.N.d.A.M.) }
\email{vincenzo.vespri@unifi.it}

\author[M. Vestberg]{Matias Vestberg}
\address{Matias Vestberg,
Department of Mathematics, Uppsala University,
P.~O.~Box 480, 751 06, Uppsala, Sweden}
\email{matias.vestberg@math.uu.se}

\begin{abstract}
We investigate some regularity properties of a class of doubly nonlinear anisotropic evolution equations whose model case is 
\begin{align*}
  \partial_t \big(|u|^{\alpha -1}u \big) - \sum^N_{i=1} \partial_i \big( |\partial_i u|^{p_i - 2} \partial_i u \big) = 0, 
  \end{align*} 
where $\alpha \in (0,1)$ and $p_i \in (1, \infty)$. We obtain super and ultracontractive bounds, and global boundedness in space for solutions to the Cauchy problem with initial data in $L^{\alpha+1}(\mathbb{R}^N)$, and show that the mass is nonincreasing over time. As a consequence, compactly supported evolution is shown for optimal exponents. We introduce a seemingly new paradigm, by showing that Caccioppoli estimates, local boundedness and semicontinuity are consequences of the membership to a suitable energy class. This membership is proved by first establishing the continuity of the map $t \mapsto |u|^{\alpha-1}u(\cdot,t) \in L^{1+1/\alpha}_{\textrm{loc}}(\Omega)$ permitting us to use a suitable mollified weak formulation along with an appropriate test function.
\end{abstract}
\maketitle

\setcounter{tocdepth}{1}

\begin{center}
	\begin{minipage}{9.3cm}
		\small
		\tableofcontents
	\end{minipage}
\end{center}

\vskip0.5cm \noindent 
 
\section{Introduction}\label{sec: intro}
\noindent This work is concerned with local and global regularity properties of weak solutions to doubly nonlinear anisotropic evolution equations of the form
\begin{align}\label{eq:diffusion}
\partial_t \big( |u|^{\alpha -1} u\big)  - \nabla\cdot A(x,t,u,\nabla u) = 0 \quad \text{ in } \quad \Omega_T:=\Omega\times (0,T),
\end{align}
where $\Omega \subset \R^N$ is an open bounded set, and $A$ is a Caratheodory vector field satisfying the conditions
\begin{align}
\label{cond:structure1} A(x,t,s, \xi)\cdot \xi &\geq \Lambda^{-1} \sum^N_{i=1}|\xi_i|^{p_i},
 \\
\label{cond:structure2} |A_i(x,t,s, \xi)| &\leq \Lambda \big( \sum^N_{k=1} |\xi_k|^{p_k}\big)^\frac{p_i-1}{p_i},\hspace{5mm}i \in \{1,\dots, N\}.
\end{align}
Conditions of this type for parabolic equations were previously considered in \cite{YuLi}, and are somewhat more general than the following conditions which are also frequently used, see for example \cite{Antontsev-Shamrev}, \cite{DuMoVe}: 
 \begin{align*}
 A_i(x,t,s, \xi)\xi_i &\geq \Lambda^{-1} |\xi_i|^{p_i}, \hspace{5mm}i \in \{1,\dots, N\},
 \\
 |A_i(x,t,s, \xi)| &\leq \Lambda |\xi_i|^{p_i -1},\hspace{5mm}i \in \{1,\dots, N\}.
\end{align*}
The model case for such vector fields is 
\begin{align*}
 A_i(x,s,\xi) = |\xi_i|^{p_i-2}\xi_i,
\end{align*}
so that we get the prototype equation
\begin{align}\label{eq:prototype}
 \partial_t \big(|u|^{\alpha -1}u \big) - \sum^N_{i=1} \partial_i \big( |\partial_i u|^{p_i - 2} \partial_i u \big) = 0,
\end{align}

\noindent The equations considered in this paper combine in a unitary fashion two of the most studied nonlinear equations in the last two decades: the doubly nonlinear equations and the anisotropic ones.\vskip0.2cm 

\noindent Doubly nonlinear equations were introduced in the 60s by Lions  \cite{L} and Kalashnikov  \cite{K}. The term doubly nonlinear refers to the fact that there is a nonlinearity both in the elliptic part and the diffusion part of the equation. Equations of this kind have a broad spectrum of applications in many physical contexts, for instance as the flows of nonhomogeneous non-Newtonian fluids, and simultaneous motion in the surface channel in the underground water, just to name a few. We refer to Chapter 4 of the book \cite{AntShm-Energy} and the references therein for an account of the applications. Even if the theory is quite complete, especially in the degenerate and singular supercritical case (for the exact definition of these technical terms, see for example  \cite{FoHeVe}), some important questions still remain open and are the object of intense research. 
\vskip0.2cm 
\noindent The anisotropic equations were introduced in the 80s by Giaquinta \cite{Gia} and Marcellini \cite{Mar}. The term anisotropic comes from the fact that the diffusion is of the power type which can vary according to the directions. While in the case of equations with differentiable coefficients there are many regularity results (see for example\cite{BoBr} and \cite{EMM}), for what concerns rough coefficients the theory of regularity is still in its infancy. In fact, if the $L^\infty$-estimates and the Critical Mass Lemma, two technical tools necessary to demonstrate regularity (See \ref{est:local_bddness} and \ref{lem:DG-type} later on), can be adapted in this situation, the same does not apply for the so-called shrinking lemma (see \cite{DiBene}, Lemma 7.2 Chap III and Lemma 5.1 Chap IV). We recall that the shrinking lemma, introduced by De Giorgi \cite{DeGi} is the other fundamental technical tool needed to demonstrate regularity. Thanks to this lemma, it can be proved that, while taking a smaller domain, if we denote with $\mu$ the infimum of the solution, the measure of the set where the solution takes on values between $\mu + \varepsilon$  and $\mu$  tends to zero when $\varepsilon$ goes to zero (for more details we refer the reader to\cite{ DiBeUrbVes}. In a pioneering work Liskevich and Skrypinik \cite{LS} succeeded in proving regularity in a very special case by substituting the shrinking lemma with the positivity expansion approach introduced in \cite{DiBeGiVe0} in the parabolic context. Despite these recent results, as already mentioned, the theory of regularity is extremely fragmented. For example, Harnack's estimates for the elliptic operator are known, to our knowledge, only for operators with constant coefficients and with all exponents $p_i$ greater than 2 satisfying a parabolic condition (see \cite{CiaMosVes} for more details).

 \vskip0.2 cm \noindent Finally, the joint nonlinearity of doubly nonlinear anisotropic equations were first investigated in the works \cite{Degtyarev-Tedeev-Bilateral} and \cite{TedDeg}, which proved, inter alia, support growth estimates for the prototypical anisotropic operators and $L^\infty$-estimates. More precisely, in \cite{Degtyarev-Tedeev-Bilateral}, the authors study the compact support for solutions to the prototype equation to \eqref{eq:diffusion}, using a strong notion of solution and an interpolated anisotropic Gagliardo-Nirenberg inequality. Hence, with a proof given in the successive paper \cite{TedDeg}, they study the lifetime of the solution and give local estimates of its $L^{\infty}$-norm in terms of certain integral quantities, showing that the support growth estimates are  optimal.\newline \noindent 
 In this paper, not only we extend their results to operators with non-smooth coefficients (for which the theory of regularity is unknown) but also we prove, in this context, the aforementioned Critical Mass Lemma and other structural results that lay down the foundations of most subsequent work related to the regularity of solutions of this kind of equations.
 
 \subsubsection*{Applications}
 As an application one gets for free sharp estimates on the support of Barenblatt-type solutions (see for instance \cite{CiaMosVes}), which play the role of the fundamental solutions for these operators. Indeed, no explicit self-similar fundamental solutions is known (see \cite{FeVaVo} for a discussion on this topic) and the exponential shift commonly used in\cite{DiGiVe-mono} winds up the spatial anisotropy (see section Novelty and Significance in \cite{CiaSkrVes} for instance).\newline \noindent
 Further, the ultracontractive estimates  that we are about to describe play an important role in the existence of solutions for the Cauchy Problem with $L^1_{loc}(\Omega)$ initial data (see for instance \cite{DiBeHer}).
\vskip0.2cm \noindent 
\subsection*{Plan of the paper}
Concerning local weak solutions to \eqref{eq:diffusion}, we draw a detailed analysis of 
\begin{enumerate}
    \item[1] Mollified weak formulation of the notion of solution, that eventually leads us to the continuity in time of the map \[t \mapsto |u|^{\alpha-1}u \in L^{1+1/\alpha}_{\textnormal{loc}} (\Omega);\]\vskip0.1cm \noindent 
    \item[2] Energy bounds for solutions;
    \item[3] Local boundedness and lower semicontinuity (for the whole energy class).\vskip0.1cm \noindent 
\end{enumerate}
\noindent  On the other hand, regarding weak solutions of Cauchy Problem associated to \eqref{eq:diffusion} we study the following properties
\vskip0.2cm \noindent 
    \begin{enumerate}
        \item[4.1] The evolution of the $L^1(\R^N)$ and $L^{\alpha+1}(\R^N)$ norms;
        \vskip0.1cm \noindent 
        \item[4.2] Ultracontractivity properties;\vskip0.1cm \noindent 
        \item[4.3] Compactly supported evolution.
    \end{enumerate}
\vskip0.2cm \noindent Hereafter we introduce each one of these aspects. While in the literature (see for instance \cite{Degtyarev-Tedeev-Bilateral}) the continuity in time is often included in the definition, here we start by the full variational definition of local weak solutions. Suitably adapting an idea of \cite{St}, we prove the aforementioned continuity in time and give a mollified weak formulation that dispenses with the usual Steklov averaging technique. Finally, this smoothed formulation is used to show that local weak solutions to \eqref{eq:diffusion} are elements of special energy classes.

\subsection*{Energy Classes} In this work we pursue an approach that dates back to De Giorgi (see \cite{DeGi}): we show that certain regularity properties are embodied in general energy estimates rather than in the mere class of solutions to an equation. Here we define two classes of functions $\DD(p_i,\alpha, \Omega_T)$, $\A(p_i,\alpha, \Omega_T)$ (see the end of Section \ref{sec:energy}), that we believe may be of paramount importance in the study of the local behavior of solutions associated to general doubly nonlinear anisotropic operators. The set inclusion $\A(p_i,\alpha, \Omega_T) \subseteq \DD(p_i,\alpha, \Omega_T)$ means, roughly speaking, that classic Caccioppoli estimates are a consequence of testing with a specified Lipschitz function $f(u)$ (see Lemma \ref{lem:Energy_Est} for more details). This fact is clearly not new, but mainly employed in the regularity theory for systems, see for instance (see for instance \cite{DiBene}, chapter VIII). Anisotropic operators as \eqref{eq:diffusion} have many similarities with systems, since each energetic term is independent from the others.\newline \noindent The classes $\A(p_i,\alpha, \Omega_T)$, while being more restrictive, promise wider application for the regularity of anisotropic operators. Indeed, to make an example, let us mention that in the case of the parabolic $p$-Laplace equation, the precise property of expansion of positivity has been found by means of logarithmic estimates (see for instance \cite{DiBene}); these are estimates obtained, loosely speaking, by testing with functions of the form $\ln(H/[H-(1-u)])^+$, $H>0$, hence corresponding to the description of the class $\A(p,1, \Omega_T)$.

\subsection*{Local Boundedness and Semicontinuity}
\noindent To what pertains the boundedness of local weak solutions to \eqref{eq:diffusion}, we are interested in the parameter range
\begin{align}\label{parameter-range}
\alpha \in (0,1), \hspace{7mm} 1 < p_i < \bar p \Big(1+ \frac{\alpha +1}{N}\Big), \hspace{7mm} \bar p < N,
\end{align}
where
\begin{align*}
 \bar p := \Big( \frac1N \sum^N_{i=1} \frac{1}{p_i}\Big)^{-1}.
\end{align*}
The limit case $\alpha=1$ recovers the known facts in \cite{DuMoVe}. More generally, we prove that under certain conditions involving the exponents $p_i$s and $\alpha$ we have \[ \DD(p_i,\alpha, \Omega_T)\subseteq L^{\infty}_{\textnormal{loc}}(\Omega_T).\] \noindent Hence, by the membership of local weak solutions of \eqref{eq:diffusion} to the class $\DD(p_i,\alpha, \Omega)$, we obtain the local boundedness for weak solutions.\newline
We need to distinguish between the cases $\bar p > \frac{N(\alpha + 1)}{N + \alpha +1}$ and $\bar p \leq \frac{N(\alpha + 1)}{N + \alpha +1}$. In the latter case we need the extra integrability condition 
\begin{align}\label{extra_integrability}
 u \in L_\textnormal{loc}^m(\Omega_T), \textnormal{ for some } m > \frac{N}{\bar p}(\alpha +1 - \bar p),
\end{align} similarly to sub-critical $p$-Laplacian equations (see for instance \cite{DiBene}). We remark that in the case $\alpha=1$, which corresponds to the usual anisotropic equations, the two ranges for $\bar p$ are identical to those appearing in \cite{YuLi}, and also the extra integrability condition reduces to the integrability assumption used in \cite{YuLi}.
\vskip0.1cm \noindent Moreover, we show that elements of the class $\DD(p_i, \alpha, \Omega_T)$ are lower-semicontinuous. The strategy adopted in \cite{DuMoVe} adapted the idea of \cite{Kuusi} to the anisotropic metric induced by the equation. In the doubly nonlinear scenario complications arise since the difference between a solution and a constant is not necessarily another solution. This first difficulty was faced in \cite{AvLu}, leaving however open the question whether a similar result could be found for sign-changing solutions. Here we follow the very general method of \cite{Naian}, whose prerogative is that lower-semicontinuity is shown to be a consequence of some kind of measure-theoretical maximum principle, referred to in the literature as a Critical Mass Lemma, or a De Giorgi-type Lemma.

\subsection*{Ultracontractivity and the evolution of the $L^1(\R^N)$ and $L^{\alpha+1}(\R^N)$ norms} We consider the Cauchy Problem \begin{align}\label{CP}
 \left\{
\begin{array}{ll}
\partial_t \big( |u|^{\alpha -1} u\big)  - \nabla\cdot A(x,t,u,\nabla u) = 0, & \quad \text{in } S_T:=\R^N \times (0,T), 
\\[5pt]
 u(x,0) = u_0(x),  & \quad x \in \R^N,
\end{array}
\right.
\end{align} with $A$ being the Caratheodory field associated to \eqref{eq:diffusion}-\eqref{cond:structure1}-\eqref{cond:structure2}. For a local weak solution $u\in \cap_{i=1}^N {L^{p_i}(S_T)}$ of \eqref{CP}, we consider the representative for which $|u|^{\alpha-1}u$ is continuous with respect to time. Then, the mass is nonincreasing, meaning that 
\begin{align*}
 \norm{u(\cdot,t)}_{L^1(\R^N)}\leq \norm{u_0}_{L^1(\R^N)},
\end{align*} 
and a corresponding estimate holds also for the $L^{\alpha+1}$-norm. When $\lambda_1:= N(\bar p - (\alpha+1)) + \bar p > 0$ we have the following ultra-contractivity property, 
 \begin{align}\label{Intro-est:LinfL1}
 \norm{u(\cdot, \tau)}_{L^\infty(\R^N)} \leq c \tau^{-\frac{N}{\lambda_1}} \Big( \int_{\R^N} |u_0| \d x \Big)^\frac{\bar p}{\lambda_1}, \qquad \forall \tau \in (0,T).
 \end{align} 
\noindent The problem has been addressed in \cite{DuMoVe} for the case of anisotropic $p$-Laplacian operators and earlier in \cite{TedDeg} following the approach of \cite{DiBeHer} with a simplifying technique again by \cite{AndTed} and obtaining the contractivity properties above by means of the quantities \[|||u|||_r= \sup_{\rho \ge r} \rho^{-\mu}\int_{E_{\rho}} |u(x)|\d x,\]  
where $\mu>0$ is a constant and $E_\rho$ is a suitably scaled rectangle depending on the parameter $\rho$. Here we point out that in the framework of nonlinear semigroup theory these $L^1$-$L^{\infty}$ estimates are usually referred to as ultracontractive bounds. In the linear case, it is well-known that estimates of this type are equivalent to specific Sobolev inequalities for the Dirichlet form that is associated to the generator of the evolution under consideration. A different but particularly interesting approach to these estimates is given in \cite{BonGri}, where the authors prove the $L^1$-$L^{\infty}$ estimate above for the isotropic case, by exploiting a logarithmic Sobolev inequality.

\subsection*{Finite Speed Propagation}
In the context of non-Newtonian fluids, already in the 80s some techniques were known in order to estimate the support of solutions by comparison (see for instance \cite{DiHe}, \cite{Wata}) or by energy methods (see \cite{Antontsev}). A further step in doubly nonlinear equations was made in \cite{AntDia}, where the energy methods were fully exploited to avoid the comparison principle, which is not available in this case. For a more complete picture see Chapter 3 of \cite{AntShm-Energy} and its bibliographical remarks. In our case, we exploit a particular choice of test functions that, being compactly supported away from the initial datum, allow recursive iterative inequalities à la De Giorgi. Nevertheless, this approach was already introduced in \cite{DiHe} with the construction of suitable supersolutions; while the optimal behaviour of the support of solutions of doubly nonlinear degenerate equations has been investigated in \cite{Degtyarev-Tedeev-Bilateral} (see also \cite{TeVe} for the case of systems), adapting the technique of \cite{AndTed} originally conceived for high-order equations. Along this line, compactly supported anisotropic evolution was studied in \cite{DuMoVe} (see also \cite{AAloc}), exploiting the technique of bounding all the energy by a precise choice of test functions allowing the estimate for a single direction (see Section \ref{sec: supp} for more details). Here we take advantage of this technique to face the double nonlinearity.\vskip0.1cm \noindent 
Despite doubly nonlinear equations being the natural bridge between two classic nonlinear parabolic equations, namely the $p$-Laplacian and the porous medium equation (see on this topic, \cite{DiBeUrbVes},\cite{Vazquez}), the precise property of finite speed of propagation  is reminiscent of hyperbolic equations (see for instance \cite{Aronson} for the porous medium equation). In this scenario, a common point is again the energy method, which avoids the comparison principle (see for instance the book \cite{Antontsev-Shamrev}) and is useful to determine the qualitative property of finite speed of propagation, intended as the property of dead cores formation. On a different path, we start from a nontrivial compactly supported datum $u_0$ and we study the evolution in time of the support of the solution to the Cauchy problem associated with \eqref{eq:diffusion}.

\begin{theo} 
Suppose that the condition 
\begin{equation}\label{Intro-slow diffusion}
\alpha+1<p_i \leq p_N < \bar{p} ( 1+ \alpha/N)< N + \alpha
\end{equation} is satisfied for all $i = 1,\dots, N$. Let $u\in \bigcap_{i=1}^N L^{p_i}(S_T)$ be local weak solution to the Cauchy problem \eqref{CP} and suppose
\[ u_0 \in L^{1+\alpha}(\R^N)\cap L^1(\R^N), \quad \quad \emptyset \ne \text{supp}(u_0) \subset [-R_0,R_0]^N=: \K_{R_0}.\] Then the support of $u$ evolves with the law
\begin{equation} \label{intro-supporto}
\text{supp}(u(\cdot, t)) \subset \prod_{i=1}^N [-R_i(t), R_i(t)], \quad R_i(t)= 2R_0+ \gamma \|u_0\|_{L^1(\R^N)}^{\frac{\bar{p}(p_i-\alpha-1)}{\lambda_1 p_i}} t^{\frac{N(\bar{p}-p_i) + \bar{p}}{\lambda_1p_i}}.
\end{equation}

\end{theo}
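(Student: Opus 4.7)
The plan is to run a De Giorgi--Stampacchia iteration in each coordinate direction, combining the Caccioppoli-type energy bounds (guaranteed by the membership of $u$ in the class $\A(p_i,\alpha,\Omega_T)$, see Lemma \ref{lem:Energy_Est}) with the ultracontractive $L^1$-$L^\infty$ estimate \eqref{Intro-est:LinfL1}.

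Fix $t\in(0,T)$ and $i\in\{1,\dots,N\}$. For $\rho\ge 2R_0$ and $h>0$, let $\zeta(x)=\chi(x_i)$ with $\chi$ a smooth 1D cutoff, $\chi\equiv 0$ for $x_i\le\rho$, $\chi\equiv 1$ for $x_i\ge\rho+h$, and $|\chi'|\le 2/h$. Because $\text{supp}(u_0)\subset\K_{R_0}$ and $\rho\ge 2R_0>R_0$, the initial data contribute no boundary term. Testing the mollified weak formulation with $u\zeta^{m}$ for a sufficiently large exponent $m$ yields, after standard manipulations,
\[
\sup_{s\in(0,t)}\int_{\R^N}|u(x,s)|^{\alpha+1}\zeta^{m}\,dx + \sum_{j=1}^N\int_0^t\!\!\int_{\R^N}|\partial_j u|^{p_j}\zeta^{m}\,dx\,ds \le \frac{C}{h^{p_i}}\int_0^t\!\!\int_{S^i_{\rho,h}}|u|^{p_i}\,dx\,ds,
\]
with $S^i_{\rho,h}:=\{\rho<x_i<\rho+h\}$. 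Only the $i$-th direction appears on the right because $\zeta$ depends only on $x_i$; this is the key feature of the one-directional cutoff technique adapted from \cite{DuMoVe}.

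To close a self-improving recursion, split $|u|^{p_i}=|u|^{\alpha+1}|u|^{p_i-\alpha-1}$, which is legitimate since $p_i>\alpha+1$, and bound the second factor via \eqref{Intro-est:LinfL1} by $c\,s^{-N(p_i-\alpha-1)/\lambda_1}\,M^{\bar p(p_i-\alpha-1)/\lambda_1}$, where $M:=\norm{u_0}_{L^1(\R^N)}$. An anisotropic Gagliardo--Nirenberg inequality then interpolates $|u|^{\alpha+1}$ on the right between the $L^\infty_tL^{\alpha+1}_x$ and the mixed directional gradient bounds on the left, producing a nonlinear recursion of the form
\[
J(\rho+h)\le C\,h^{-\kappa}\,t^{\sigma}\,M^{\tau}\,J(\rho)^{1+\epsilon}, \qquad J(\rho):=\int_0^t\!\!\int_{\{x_i>\rho\}}|u|^{\alpha+1}\,dx\,ds,
\]
for explicit positive exponents $\kappa,\sigma,\tau,\epsilon$ depending on $\bar p,p_i,\alpha,N$; here the strict subcriticality $p_N<\bar p(1+\alpha/N)$ is exactly what guarantees $\epsilon>0$, so that the iteration self-improves.

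A fast-decay Stampacchia lemma applied along the geometric sequence $\rho_n=2R_0+R^\star(1-2^{-n})$ forces $J(2R_0+R^\star)=0$ once
\[
R^\star:=\gamma\,M^{\bar p(p_i-\alpha-1)/(\lambda_1 p_i)}\,t^{(N(\bar p-p_i)+\bar p)/(\lambda_1 p_i)}
\]
is chosen with $\gamma$ sufficiently large in terms of data. A mirror-image argument for $\{x_i<-\rho\}$ and intersection over $i=1,\dots,N$ then deliver the box confinement \eqref{intro-supporto}. The main technical obstacle will be the exponent bookkeeping: the powers of $M$, $t$ and $h^{-1}$ produced by the ultracontractive bound and the anisotropic Gagliardo--Nirenberg embedding must combine to hit precisely the target exponents $a_i,b_i$ of \eqref{intro-supporto} while keeping $\epsilon$ strictly positive, and it is this balancing that forces the slow-diffusion window $\alpha+1<p_i\le p_N<\bar p(1+\alpha/N)<N+\alpha$.
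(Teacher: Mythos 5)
Your overall plan — one–directional cutoffs in the spirit of \cite{DuMoVe}, an energy estimate, a superlinear De~Giorgi recursion in the slab, and the ultracontractive $L^1$--$L^\infty$ bound to close the argument — is the right skeleton, and it is the one the paper follows in Theorem~\ref{thm: support}. There are, however, three concrete gaps in the details.

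First, the iterated quantity is wrong. You set $J(\rho)=\iint_{\{x_i>\rho\}}|u|^{\alpha+1}$, but the anisotropic parabolic Sobolev inequality (Theorem~\ref{PAS}) gives superlinear control only of $\iint|v|^{q}$ with $q=\theta p^{*}_{\alpha}+(1-\theta)\sigma$ and $\theta>0$; taking $q=\sigma=\alpha+1$ forces $\theta=0$, so the recursion $J(\rho+h)\lesssim h^{-\kappa}t^{\sigma}M^{\tau}J(\rho)^{1+\epsilon}$ with $\epsilon>0$ simply does not come out. The quantity that naturally closes the loop with the energy estimate must be the one appearing on its right-hand side, i.e.\ $\iint|u|^{p_i+\mu-1}$ over nested annuli for a suitably chosen power $\mu$; this is what the paper iterates.

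Second, testing with $u\zeta^m$ (equivalently taking $\mu=1$) is not strong enough in the stated parameter window. If one iterates $\iint|u|^{p_i}$ and seeds the iteration via the ultracontractive bound with the split $|u|^{p_i}=|u|\,|u|^{p_i-1}$, the time integral $\int_0^T s^{-N(p_i-1)/\lambda_1}\,ds$ converges only when $p_i<\bar p(1+1/N)-\alpha$, which is \emph{not} implied by $p_i<\bar p(1+\alpha/N)$ (it fails unless $\bar p\ge N\alpha/(1-\alpha)$). If instead one uses your split $|u|^{p_i}=|u|^{\alpha+1}|u|^{p_i-\alpha-1}$ at the $Y_0$ stage, the time integral converges, but the remaining spatial integral is bounded by $\|u_0\|_{L^{\alpha+1}}^{\alpha+1}$ via Lemma~\ref{lem:Lalpha+1_Lalpha+1}, not by $\|u_0\|_{L^1}$, so you would not recover the $\|u_0\|_{L^1}$-only dependence of \eqref{intro-supporto}. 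The paper resolves this by testing with $f_\varepsilon(u)\zeta^m$ where $f_\varepsilon(s)=(s^2+\varepsilon^2)^{(\mu-1)/2}s$ and $\mu\in[1-\alpha,\,\bar p(1+1/N)-p_i+1-\alpha)$; this produces the correct power $\iint|u|^{p_i+\mu-1}$ and makes $\int_0^T s^{-N(p_i+\mu-2)/\lambda_1}\,ds$ finite after the ultracontractive bound, with only $\|u_0\|_{L^1}$ appearing. The nonempty range for $\mu$ is exactly what \eqref{slow diffusion} guarantees; your proposed $\mu=1$ lies outside it in general.

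Third, a structural prerequisite is missing. The anisotropic parabolic Sobolev inequality requires the function to be compactly supported in \emph{all} spatial directions, whereas your cutoff $\zeta(x)=\chi(x_i)$ restricts only $x_i$. This step is legitimate only once one already knows that $u(\cdot,t)$ has compact support. The paper therefore first proves a qualitative compact-support statement (Lemma~\ref{lem:supp-rough}), using nested boxes $\K_{r_n}\setminus\K_{s_n}$ and the parabolic Sobolev inequality in all $N$ directions at once, and only afterwards runs the sharp one-directional argument. Without this preliminary step your invocation of the anisotropic embedding is not justified.
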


\subsection*{Optimality of the estimates} Under our general assumptions on the structure of the operator $A$, we can prove that both the estimates on the $L^{\infty}$-norm of solutions and on the support that we provide here are optimal. More precisely, let $S_T$ be the strip $S_T= \R^N \times (0,T)$ and $K$ a cube in $\R^N$ of side length $R_0$. Now, solutions $u\in \cap L^{p_i}(S_T)$ to the Cauchy problem \eqref{CP} that have integrable initial datum $u_0\in L^{\alpha+1}(\R^N)$ which is compactly supported in $K$, satisfy under the condition $\lambda_1:= N(\bar p - (\alpha+1)) + \bar p > 0$ the estimate
 \[
 \norm{u(\cdot, \tau)}_{L^\infty(\R^N)} \leq c \tau^{-\frac{N}{\lambda_1}} \Big( \int_{\R^N} |u_0| \d x \Big)^\frac{\bar p}{\lambda_1},
 \]
for all $\tau \in (0,T)$. If \eqref{Intro-slow diffusion} holds true, the support of $u$ evolves with the law
\[\text{supp}(u(\cdot, t)) \subset \prod_{i=1}^N [-R_i(t), R_i(t)], \quad R_i(t)= 2R_0+ \gamma \|u_0\|_{L^1(\R^N)}^{\frac{\bar{p}(p_i-\alpha-1)}{\lambda_1 p_i}} t^{\frac{N(\bar{p}-p_i) + \bar{p}}{\lambda_1p_i}}.\]
\noindent These estimates are shown to be optimal by the following rectangular computation:
\[||u(\cdot, t)||_{L^1(\R^N)}\leq ||u(\cdot, t)||_{L^{\infty}(\R^N)} |\supp(u(\cdot, t))|\leq \gamma ||u_0||_{L^1(\R^N)},  \qquad \gamma>0,\, \,  t>>2R_0,\]
because 
\[|\supp(u(\cdot, t))| \leq  (2\gamma)^N ||u_0||_{L^1(\R^N)}^{\sum_{i=1}^N \frac{\bar{p}(p_i-\alpha-1)}{(\lambda_1 p_i)}} t^{\sum_{i=1}^N\frac{N(\bar{p}-p_i)+\bar{p}}{\lambda_1 p_i}} = \gamma ||u_0||_{L^{1}(\R^N)}^{\frac{(\lambda_1-\bar{p})}{\lambda_1}} t^{N/\lambda_1} .\]
 If one of the two estimates would have been non-optimal, then the fact that the mass is nonincreasing would be contradicted.

\subsection{Structure of the paper} In Section \ref{sec:setting} we define the notion of solution, the associated function spaces and we introduce the tools of the trade: exponential mollification, monotonicity inequalities and the main embeddings. In Section \ref{sec:time-cont} we show the continuity in time of $|u|^{\alpha-1}u$ as a map $[0,T] \rightarrow L^{(\alpha+1)/\alpha}_{\textnormal{loc}}(\Omega)$ and we give a more handy  definition of solution that involves time derivatives. In Section \ref{sec:energy} we derive the main energy estimates and accordingly we define the functional classes $\A(p_i, \alpha, \Omega_T)$ and $\DD(p_i, \alpha, \Omega_T)$. Then in Section \ref{sec: localboundedness} we study the local boundedness of functions belonging to the function class $\DD(p_i, \alpha, \Omega_T)$ and in Section \ref{sec: semicontinuity} we study their pointwise behaviour. Lastly, in Section \ref{sec: Cauchy} we give precise estimates of the evolution of the $L^{\infty}$ norm of the solutions to the Cauchy problem, we study the time evolution of their $L^1$ and $L^{\alpha+1}$ norms. Finally, in Section \ref{sec: supp} we estimate the evolution of their support.

\vspace{10mm}
\noindent
{\bf Acknowledgments.} This work was partially supported by the Wallenberg AI, Autonomous Systems and Software Program (WASP) funded by the Knut and Alice Wallenberg Foundation. Simone Ciani expresses gratitude to both Departments of Mathematics of the Technical University of Darmstadt and the University of Bologna, as former and current places of employment. Specifically, the current place of employment is supported itself by PNR 2021-2027 fundings of MIUR, that we acknowledge. Vincenzo Vespri wants to express his gratitude towards GNAMPA (INdAM).

\vspace{10mm}

\section{Notation} \label{sec: notation}

{\small \begin{itemize}
    \item Referring to the main structure in the Introduction, we will consider a vector of real numbers ${\bf{p}}=(p_1, \dots, p_N)$ and we assume without loss of generality that $p_1 \leq \dots p_N.$
    The harmonic mean $\bar{p}$ is defined as $\bar{p}= (\sum_{i=1}^N 1/p_i)^{-1}$ and for $\bar{p}<N$ its Sobolev conjugate $\bar{p}^*= N\bar{p}/(N-\bar{p})$. Finally for $\alpha\in (0,1)$ as in the Introduction,  we set $P = \max\{ (\alpha +1), \,p_N\}$.

\vskip0.2cm \noindent 

\item We define \[\bar{p}_{\sigma}=\bar{p} (1+\sigma/N).\]

\vskip0.2cm \noindent 
\item For a number $n\in \N$, a set $E\subset \R^n$ and a vector $v\in \R^n$, we denote as usual the set 
$ v + E = \{ v + x \,|\, x \in E\}$. For every $r>0$ we define the the following $N$-dimensional hyper-rectangles:
\begin{align*}
 K_r &:= (-r^\frac{1}{p_1},r^\frac{1}{p_1}) \times \dots \times (-r^\frac{1}{p_N}, r^\frac{1}{p_N}),
 \\
 \K_r &:= [-r,r]^N.
\end{align*}
 Moreover, for $(x_o,t_o) \in \R^N\times \R$ and $r>0$ we define the space-time cylinders
\begin{align*}
 Q_r(x_o,t_o) := (x_o,t_o) + K_r \times (-r, 0].
\end{align*} 
\vskip0.2cm \noindent 
\item For $\gamma > 0$ and $a\in \R$ we understand $|a|^{\gamma-1}a$ to be zero if $a=0$ even though in this case technically the first factor is ill-defined if $\gamma < 1$. We occasionally denote $a^\gamma = |a|^{\gamma -1} a$ to simplify the notation.
\vskip0.2cm \noindent 

\item Let $\alpha \in (0,1)$, $\beta := 1/\alpha > 1$ and define for $w,v \in \R$ the quantities
\vskip0.1cm \noindent 
\begin{align*}
\notag
\b[v,w] :=& \tfrac{1}{\beta+1} (|v|^{\beta+1}-|w|^{\beta+1}) - |w|^{\beta-1}w (v-w)
\\
\notag =& \tfrac{\beta}{\beta+1} (|w|^{\beta+1}-|v|^{\beta+1})- v (|w|^{\beta-1}w-|v|^{\beta-1}v), 
\\ \notag
\\ 
\qquad \b_\alpha[v,w] :=& \b[|v|^{\alpha-1}v,|w|^{\alpha-1}w] = \tfrac{\alpha}{\alpha+1} ( |v|^{\alpha+1} - |w|^{\alpha+1}) - w (|v|^{\alpha-1}v - |w|^{\alpha-1}w).
\end{align*}
The quantity $\b[v,w]$ for nonnegative $v,w$ was used in \cite{SiVe,SiVe2,VeVe}, with a notation consistent with the one presented above. We alert the reader that there is a slight notational discrepancy with the work \cite{BoeDuKoSc}, in which a signed vectorial version of the quantity is used.

\vskip0.2cm \noindent 
\item Constants along the estimates may vary from line to line, when no dependence on the solution or other important iterative quantities is embodied.

\end{itemize}
}

\newpage

\section{Setting and Preliminaries}\label{sec:setting}
Here we introduce some notation and present auxiliary tools that will be useful in the course of the paper. We start by the definition of weak solutions to \eqref{eq:diffusion}, and to this aim we briefly recall the definition of the anisotropic Sobolev spaces. Given a vector of numbers ${\bf p} = (p_1,\dots, p_N)$ as in Section \ref{sec: notation} with $p_i > 1$ we set 
\begin{align*}
 W^{1, {\bf p}}_\textrm{o}(\Omega) &:= \{ v \in W^{1,1}_\textrm{o}(\Omega)\,|\, \partial_i v \in L^{p_i}(\Omega)\}.
 \\
 W^{1, {\bf p}}_\textrm{loc}(\Omega) &:= \{ v \in W^{1,1}_\textrm{loc}(\Omega)\,|\, \partial_i v \in L^{p_i}_\textrm{loc}(\Omega)\},
\end{align*}
and
\begin{align*}
 L^{\bf p}(0,T; W^{1, {\bf p}}(\Omega)) &:= \{ v \in L^1(0,T;W^{1,1}(\Omega)) \,|\, \partial_i v \in L^{p_i}(\Omega_T)\},
 \\
 L^{\bf p}_\textrm{loc}(0,T; W^{1, {\bf p}}_\textrm{loc}(\Omega)) &:= \{ v \in L^1_\textrm{loc}(0,T;W^{1,1}_\textrm{loc}(\Omega))\,|\, \partial_i v \in L^{p_i}_\textrm{loc}(\Omega_T)\},
 \\
 L^{\bf p}(0,T; W^{1, {\bf p}}_\textrm{loc}(\Omega)) &:= \{ v \in L^1(0,T;W^{1,1}_\textrm{loc}(\Omega))\,|\, \partial_i v \in L^{p_i}(0,T; L^{p_i}_\textrm{loc}(\Omega))\}.
\end{align*}

\begin{defin}\label{def:weaksol}
A function $u \in L^{\bf p}(0,T; W^{1, {\bf p}}_\textrm{loc}(\Omega)) \cap L^{P}(0,T; L^P_\textrm{loc}(\Omega))$ where 

\noindent $P = \max\{ (\alpha +1), \,p_N\}$ is a solution to \eqref{eq:diffusion} if 
\begin{align}\label{eq:weak_form}
&\iint_{\Omega_T} A(x,t,u,\nabla u)\cdot \nabla \varphi- |u|^{\alpha-1} u\partial_t \varphi\d x\d t=0,
\end{align}
for all $\varphi \in C^\infty_o(\Omega_T)$.
\end{defin}

We remark that one could define solutions in an analogous way and obtain local regularity results also for weak solutions in the larger space $L^{\bf p}_\textrm{loc}(0,T; W^{1, {\bf p}}_\textrm{loc}(\Omega)) \cap L^P_\textrm{loc}(\Omega_T)$. However, in order to prove the desired time continuity on the whole interval $[0,T]$ i.e. including time zero, it seems necessary to have global integrability properties in time. The arguments are also clearer when this type of integrablility is assumed. The local regularity results obtained under our assumptions also hold for solutions in the larger space since a simple translation in time brings about the situation we consider here.

\subsection{Auxiliary tools}
We now recall some elementary lemmas that will be used later, and start by defining a mollification in time as in \cite{KiLi}, see also \cite{BoeDuMa}. For $T>0$, $t\in [0,T]$, $h\in (0,T)$ and $v\in L^1(\Omega_T)$ we set
\begin{align}
\label{def:moll}
v_h(x,t):=\frac{1}{h}\int^t_0 e^\frac{s-t}{h}v(x,s)\d s.
\end{align}
Moreover, we define the reversed analogue by
\begin{align*}
v_{\overline h}(x,t) :=\frac{1}{h}\int^T_t e^\frac{t-s}{h}v(x,s)\d s.
\end{align*}
For details regarding the properties of the exponential mollification we refer to \cite[Lemma 2.2]{KiLi}, \cite[Lemma 2.2]{BoeDuMa}, \cite[Lemma 2.9]{St}. The properties of the mollification that we will use have been collected for convenience into the following lemma:
\begin{lem}
\label{expmolproperties} Suppose that $v \in L^1(\Omega_T)$, and let $p\in[1,\infty)$. Then the mollification $v_h$ defined in \eqref{def:moll} has the following properties:
\begin{enumerate}
\item[(i)]
If $v\in L^p(\Omega_T)$ then $v_h\in L^p(\Omega_T)$,
$$
\norm{v_h}_{L^p(\Omega_T)}\leq \norm{v}_{L^p(\Omega_T)},
$$
and $v_h\to v$ in $L^p(\Omega_T)$. A similar estimate also holds with $v_{\bar h}$ on the left-hand side.
\item[(ii)]
In the above situation, $v_h$ has a weak time derivative $\partial_t v_h$ on $\Omega_T$ given by
\begin{align*}
\partial_t v_h=\tfrac{1}{h}(v-v_h),
\end{align*}
whereas for $v_{\overline h}$ we have
\begin{align*}
\partial_t v_{\overline h}=\tfrac{1}{h}(v_{\overline h}-v).
\end{align*}
\item[(iii)]
If $v$ has a weak partial derivative in space then so does $v_h$ and $v_{\bar h}$ and 
\begin{align*}
 \partial_j (v_h) = (\partial_j v)_h, \hspace{5mm} \partial_j (v_{\bar h}) = (\partial_j v)_{\bar h}.
\end{align*}
\item[(iv)] If $v\in L^p(0,T;L^{p}(\Omega))$ then $v_h, v_{\bar h} \in C([0,T];L^{p}(\Omega))$.
\end{enumerate}
\end{lem}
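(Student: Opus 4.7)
\textbf{Plan of proof for Lemma \ref{expmolproperties}.} The four assertions can all be extracted from the explicit formula \eqref{def:moll} by routine manipulations; since the same computations apply to $v_{\bar h}$ after the change of variable $t \mapsto T-t$, I will only treat $v_h$.

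For (i), the key observation is that the kernel $\tfrac{1}{h}e^{(s-t)/h}\mathbf{1}_{(0,t)}(s)$ has total mass $1-e^{-t/h}\le 1$ in the variable $s$. Hence Jensen's inequality applied to the convex function $r\mapsto |r|^p$, with this subprobability kernel, yields the pointwise bound $|v_h(x,t)|^p \le \tfrac{1}{h}\int_0^t e^{(s-t)/h}|v(x,s)|^p\,ds$. Integrating over $\Omega\times(0,T)$ and switching the order of integration via Fubini, the inner $t$-integral equals $\tfrac{1}{h}\int_s^T e^{(s-t)/h}\,dt = 1 - e^{(s-T)/h} \le 1$, which gives $\|v_h\|_{L^p(\Omega_T)}\le \|v\|_{L^p(\Omega_T)}$. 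For convergence, I would first verify $v_h\to v$ in $L^p$ when $v\in C_c(\Omega_T)$ by direct estimation using uniform continuity (together with the easy observation that $\tfrac1h\int_0^t e^{(s-t)/h}\,ds\to 1$ pointwise for $t>0$), and then extend to general $v\in L^p(\Omega_T)$ by the density of $C_c$ and the uniform contraction bound just proved.

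For (ii), rewrite $v_h(x,t) = e^{-t/h}\cdot \tfrac{1}{h}\int_0^t e^{s/h} v(x,s)\,ds$; the second factor has, for a.e.\ $x$, a classical derivative in $t$ equal to $\tfrac{1}{h}e^{t/h}v(x,t)$ by the Lebesgue differentiation theorem. A product-rule computation then gives $\partial_t v_h = -\tfrac{1}{h}v_h + \tfrac{1}{h}v$ a.e. To turn this into a weak derivative on $\Omega_T$, I would test against $\varphi\in C^\infty_c(\Omega_T)$, interchange the order of integration (Fubini is justified by $v\in L^1(\Omega_T)$ and the bounded kernel), integrate by parts in $t$ — the boundary terms vanish because $\varphi$ is compactly supported — and identify the resulting expression with $\iint_{\Omega_T}\tfrac{1}{h}(v-v_h)\varphi\,dx\,dt$.

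For (iii), since the mollification acts only in time, it commutes with spatial weak derivatives. Rigorously, for $\psi\in C^\infty_c(\Omega_T)$ one tests $v_h$ against $\partial_j\psi$, uses Fubini to move $\partial_j$ onto $v$ via its weak $x$-derivative (note $\psi_{\bar h}\in C^\infty_c$ in $x$ is not quite right; instead one writes $\iint v_h\partial_j\psi\,dx\,dt$, swaps integrals to obtain $\iint v(x,s)\tfrac1h\int_s^T e^{(s-t)/h}\partial_j\psi(x,t)\,dt\,ds\,dx$, and pulls $\partial_j$ outside to use the weak $x$-derivative of $v$); the resulting identity matches $\iint (\partial_j v)_h\,\psi\,dx\,dt$, which is exactly the claim.

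The main technical step is (iv). From (i) applied with the same exponent, $v_h\in L^p(0,T;L^p(\Omega))$, and from (ii) we have $\partial_t v_h = \tfrac{1}{h}(v-v_h)\in L^p(0,T;L^p(\Omega))$. Therefore $v_h$ is a function with $L^p(\Omega)$-valued distributional time derivative in $L^p$, and the standard Bochner fundamental theorem of calculus (see e.g.\ \cite{St}) gives $v_h\in C([0,T];L^p(\Omega))$, with the explicit representative $v_h(\cdot,t) = \int_0^t \partial_t v_h(\cdot,\sigma)\,d\sigma$ (the integrand being interpreted as an $L^p$-valued Bochner integral, started from $v_h(\cdot,0)=0$, which is consistent with the definition \eqref{def:moll}). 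This is the step where one must be careful that the Bochner integrability is genuinely available, but this is ensured by $v,v_h\in L^p(0,T;L^p(\Omega))$ together with the explicit form of $\partial_t v_h$.
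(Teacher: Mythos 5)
Your proof is correct. The paper does not actually prove this lemma — it is stated as a collection of known facts with references to \cite{KiLi}, \cite{BoeDuMa} and \cite{St} — and your argument (Jensen with the subprobability kernel plus Fubini for the contraction, the product-rule/ODE identity for the time derivative, Fubini and commutation for spatial derivatives, and $W^{1,p}(0,T;L^p(\Omega))\hookrightarrow C([0,T];L^p(\Omega))$ for the continuity) is precisely the standard route taken in those references.
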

 
\begin{rem}\label{rem:expmol_local_integ}
 The exponential time mollification \eqref{def:moll} is well defined also if the function $v$ is only in $L^1(0,T;L^1_\textrm{loc}(\Omega))$ and the properties $(i)$ and $(iv)$ in the previous lemma evidently have corresponding versions for functions that are only locally integrable in space.
\end{rem}

The next Lemma provides us with some useful estimates for the quantity $\b_\alpha[v,w]$ that was defined in Section \ref{sec: notation}. The estimates can be derived from the corresponding properties for the quantity $\b$ which were proved in \cite[Lemma 2.3]{BoeDuKoSc}. Note again the difference in notation between our work and \cite{BoeDuKoSc}.
\begin{lem}
\label{estimates:boundary_terms}
Let $v,w \in \R$ and $\alpha \in(0,1)$. Then there exists a constant $c$ depending only on $\alpha$ such that:
\begin{enumerate}
\item[(i)] $\tfrac 1 c\big| |w|^{\frac{\alpha-1}{2}}w - |v|^{\frac{\alpha-1}{2}}v \big|^2 \leq \b_\alpha[v,w] \leq c \big| |w|^{\frac{\alpha-1}{2}}w - |v|^{\frac{\alpha-1}{2}}v \big|^2$, \vspace{2mm}
\item[(ii)] $\tfrac 1 c (|w|+|v|)^{\alpha-1}|w - v|^2 \leq  \b_\alpha[v,w] \leq c (|w|+|v|)^{\alpha-1} |w - v|^2 $ , 
\item[(iii)]$\b_\alpha[v,w] \leq c |v - w|^{1+\alpha}$.
\end{enumerate}
\end{lem}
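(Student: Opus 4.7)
The plan is to reduce the three estimates to the corresponding known estimates for $\b[V,W]$ from \cite[Lemma 2.3]{BoeDuKoSc} via the substitution $V := |v|^{\alpha-1}v$, $W := |w|^{\alpha-1}w$, exploiting the very definition $\b_\alpha[v,w]=\b[V,W]$ with exponent $\beta = 1/\alpha > 1$. Since $|V| = |v|^\alpha$ and $|W|=|w|^\alpha$, the bookkeeping of exponents will make the right-hand sides in $(i)$--$(iii)$ appear naturally.

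For $(i)$, the quoted result gives $\b[V,W] \approx \bigl| |V|^{(\beta-1)/2}V - |W|^{(\beta-1)/2}W\bigr|^2$. A short algebraic computation, using $\alpha\beta = 1$, yields
\begin{equation*}
|V|^{(\beta-1)/2}V \;=\; |v|^{\alpha(\beta-1)/2+\alpha-1}v \;=\; |v|^{(\alpha-1)/2}v,
\end{equation*}
and analogously for $W$, so the upper and lower bounds in $(i)$ are immediate.

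For $(ii)$, the same reference gives $\b[V,W] \approx (|V|+|W|)^{\beta-1}|V-W|^2$. I will then use two standard scalar equivalences: first, the sub/super-additivity of $t \mapsto t^\alpha$ for $\alpha \in (0,1)$ which implies
\begin{equation*}
|V|+|W| \;=\; |v|^\alpha + |w|^\alpha \;\approx\; (|v|+|w|)^\alpha,
\end{equation*}
so that $(|V|+|W|)^{\beta-1} \approx (|v|+|w|)^{\alpha(\beta-1)} = (|v|+|w|)^{1-\alpha}$; and second, the classical pointwise inequality
\begin{equation*}
c^{-1}(|v|+|w|)^{\alpha-1}|v-w| \;\leq\; \bigl||v|^{\alpha-1}v - |w|^{\alpha-1}w\bigr| \;\leq\; c\,(|v|+|w|)^{\alpha-1}|v-w|,
\end{equation*}
valid on $\{|v|+|w|>0\}$, which gives $|V-W|^2 \approx (|v|+|w|)^{2(\alpha-1)}|v-w|^2$. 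Multiplying the two comparisons yields $(|V|+|W|)^{\beta-1}|V-W|^2 \approx (|v|+|w|)^{\alpha-1}|v-w|^2$, which proves $(ii)$.

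Finally, $(iii)$ follows from $(ii)$ by monotonicity in the exponent: since $\alpha-1<0$ and $|v-w|\leq |v|+|w|$, one has $(|v|+|w|)^{\alpha-1} \leq |v-w|^{\alpha-1}$ (on $\{v\neq w\}$; the case $v=w$ is trivial), hence
\begin{equation*}
\b_\alpha[v,w] \;\leq\; c\,(|v|+|w|)^{\alpha-1}|v-w|^2 \;\leq\; c\,|v-w|^{\alpha+1}.
\end{equation*}
The only mildly delicate point is the simultaneous use of the two scalar equivalences in step $(ii)$, and the bookkeeping needed to check that the exponents collapse to $\alpha-1$; everything else is a mechanical transfer of the $\b$-estimates through the power substitution. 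I expect no genuine obstacle beyond verifying the edge cases where one of $v,w$ vanishes, which are handled by a direct check in the definition of $\b_\alpha$.
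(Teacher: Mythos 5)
Your proof is correct and follows precisely the route the paper itself gestures at: the paper gives no written proof of this lemma but simply remarks that the estimates "can be derived from the corresponding properties for the quantity $\b$" proved in the cited reference, and your argument is a clean and complete elaboration of exactly that reduction. The exponent bookkeeping in (i) (using $\alpha\beta=1$ to show $|V|^{(\beta-1)/2}V = |v|^{(\alpha-1)/2}v$), the two scalar equivalences $|v|^\alpha+|w|^\alpha \approx (|v|+|w|)^\alpha$ and $\big||v|^{\alpha-1}v-|w|^{\alpha-1}w\big|\approx (|v|+|w|)^{\alpha-1}|v-w|$ in (ii), and the deduction of (iii) from (ii) via $|v-w|\leq |v|+|w|$ with a negative exponent are all sound, and the degenerate case $v=w=0$ is trivially handled as you note. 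No gaps.
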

The following observation regarding real numbers is used frequently in our calculations.
\begin{lem}\label{lem:elementary_real}
 Let $\gamma > 1$. For all $a, b \in \R$ we have 
 \begin{align}\label{est:exponent_inside}
  |a-b|^\gamma \leq c\big||a|^{\gamma-1} a - |b|^{\gamma-1}b\big|.
 \end{align}
for a constant $c=c(\gamma)$. 
\end{lem}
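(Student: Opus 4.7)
The plan is to recognize the inequality as equivalent to the global Hölder continuity (with exponent $1/\gamma$) of the inverse of the bijection $\phi(x) := |x|^{\gamma-1}x$. Since $\gamma > 1$, $\phi$ is a continuous strictly increasing bijection on $\R$ with inverse $\psi(y) = |y|^{1/\gamma-1}y$. Substituting $x = \phi(a)$ and $y = \phi(b)$ in the target inequality converts it into
\[
|\psi(x) - \psi(y)|^{\gamma} \leq c\,|x - y|,
\]
so the claim reduces to proving $|\psi(x) - \psi(y)| \leq c^{1/\gamma}\,|x-y|^{1/\gamma}$ for all $x,y \in \R$, which is the classical Hölder property of the odd power function with exponent in $(0,1)$.

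To establish this Hölder estimate I would argue by cases on the signs of $x$ and $y$. When $x \geq y \geq 0$, the subadditivity of the concave function $s \mapsto s^{1/\gamma}$ on $[0,\infty)$ (which follows from the fact that $s \mapsto s^{1/\gamma}/s$ is nonincreasing, or directly from concavity combined with $0^{1/\gamma}=0$) yields $x^{1/\gamma} \leq y^{1/\gamma} + (x-y)^{1/\gamma}$, hence $|\psi(x) - \psi(y)| \leq |x-y|^{1/\gamma}$. The case $x,y \leq 0$ is identical by the oddness of $\psi$. When $x$ and $y$ have opposite signs one instead writes
\[
|\psi(x) - \psi(y)| = |x|^{1/\gamma} + |y|^{1/\gamma} \leq 2\,(|x|+|y|)^{1/\gamma} = 2\,|x-y|^{1/\gamma},
\]
using again the subadditivity on $[0,\infty)$, together with the identity $|x-y|=|x|+|y|$ for opposite-sign arguments. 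Combining the cases and raising to the $\gamma$-th power recovers the desired inequality with $c = 2^\gamma$.

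The main obstacle is essentially absent: the estimate is elementary once recast as Hölder continuity of the inverse map, and the only mild technicality is the short sign-based case distinction. In particular, nothing from the doubly nonlinear PDE framework, from $\b_\alpha[v,w]$, or from the mollification machinery is invoked -- the lemma is a purely algebraic prerequisite to be quoted later in iteration arguments.
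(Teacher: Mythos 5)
Your proof is correct and follows essentially the same route as the paper: both arguments split into the same-sign and opposite-sign cases, and the key ingredient in the first case is the superadditivity of $s\mapsto s^\gamma$ on $[0,\infty)$ (equivalently, subadditivity of $s\mapsto s^{1/\gamma}$, which the paper packages as the $\ell^\gamma$ triangle inequality in $\R^2$). Your reformulation via the inverse map $\psi$ is a tidy way to present the same computation; the only minor difference is that in the opposite-sign case the paper's convexity bound $(a+|b|)^\gamma\leq 2^{\gamma-1}(a^\gamma+|b|^\gamma)$ yields the slightly sharper constant $c=2^{\gamma-1}$, whereas your crude bound $|x|^{1/\gamma}+|y|^{1/\gamma}\leq 2(|x|+|y|)^{1/\gamma}$ gives $c=2^\gamma$, which is still perfectly adequate for the lemma.
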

\begin{proof}[Proof]
 By symmetry we may assume that $a\geq b$. In the case $a,b\geq 0$ the triangular inequality of the $\ell^\gamma$-norm in $\R^2$ applied to $(a-b,0)$ and $(0,b)$ implies that 
 \begin{align*}
  ((a-b)^\gamma + b^\gamma)^\frac{1}{\gamma} \leq a,
 \end{align*}
from which \eqref{est:exponent_inside} follows with $c=1$. The case in which $a,b\leq 0$ follows from the previous case. It remains to consider the case $a\geq 0 > b$. Then we can write
\begin{align*}
 |a-b|^\gamma = |a+|b||^\gamma \leq 2^{\gamma-1}(a^\gamma + |b|^\gamma)= 2^{\gamma-1}|a^\gamma - |b|^{\gamma-1}b|,
\end{align*}
which is \eqref{est:exponent_inside} with $c=2^{\gamma-1}$.
\end{proof}
\noindent Next, we recall some useful anisotropic Sobolev embeddings, both for space and time. 
\begin{lem}
[Sobolev-Troisi embedding,\cite{DuMoVe}]\label{SOBE}
Let $\Omega\subseteq \R^N$ be a rectangular domain, $\bar p<N$ and $\alpha_{i}>0$, $i=1, \dots, N$. If we define 
\[
p_{\alpha}^{*}=\bar p^{*}\frac{\tilde{\alpha}}{N},\qquad \tilde{\alpha}=\sum_{i=1}^{N}\alpha_{i},
\] \noindent then there exists a constant $C=C(N, {\bf p}, {\bf \alpha})>0$ such that for all $u\in W^{1, {\bf p}}(\Omega)$ it holds
\begin{equation}
\label{ST}
\|u\|_{L^{p^{*}_{\alpha}}(\Omega)}\leq C\prod_{i=1}^{N}\|\partial_i|u|^{\alpha_{i}}\|_{L^{p_{i}}(\Omega)}^{\frac{1}{\tilde{\alpha}}}.
\end{equation}
\end{lem}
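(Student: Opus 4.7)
The strategy is to reduce the claim to an $L^1$-based Loomis--Whitney inequality applied to auxiliary powers $v_i := |u|^{\gamma_i}$, and then to recover the $L^{p_i}$ norms on the right-hand side via Hölder's inequality, with the exponents $\gamma_i$ chosen so that the resulting estimate closes on itself and the target Lebesgue exponent $p^*_\alpha$ appears naturally. By density it suffices to work with $u$ smooth and compactly supported (handling the rectangular domain $\Omega$ by zero extension or by a standard reflection/extension argument).

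For $\gamma_i \geq \alpha_i$ to be chosen, set $v_i := |u|^{\gamma_i}$. The one-dimensional fundamental theorem of calculus in the $i$-th variable gives $v_i(x)\leq F_i(x'_i)$, where $x'_i$ denotes the $(N-1)$ variables other than $x_i$ and $F_i(x'_i):=\int_\R |\partial_i v_i|\,dy_i$. Taking the geometric mean of these $N$ inequalities and applying the classical Loomis--Whitney inequality yields
\[
\int_{\R^N}|u|^{\tilde\gamma/(N-1)}\,dx \;\leq\; \prod_{i=1}^N\|\partial_i v_i\|_{L^1(\R^N)}^{1/(N-1)}, \qquad \tilde\gamma:=\sum_{i=1}^N\gamma_i.
\]
Since $\partial_i v_i = (\gamma_i/\alpha_i)\,|u|^{\gamma_i-\alpha_i}\,\partial_i|u|^{\alpha_i}$ by the chain rule, Hölder's inequality with conjugate exponents $p_i$ and $p_i'=p_i/(p_i-1)$ gives
\[
\|\partial_i v_i\|_{L^1(\R^N)} \;\leq\; c\,\|u\|_{L^{(\gamma_i-\alpha_i)p_i'}(\R^N)}^{\gamma_i-\alpha_i}\,\|\partial_i|u|^{\alpha_i}\|_{L^{p_i}(\R^N)}.
\]

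To make the two preceding displays match, I would impose $(\gamma_i-\alpha_i)p_i'=\tilde\gamma/(N-1)$ for every $i$, giving $\gamma_i=\alpha_i+\tfrac{\tilde\gamma(p_i-1)}{(N-1)p_i}$. Summing over $i$ and solving the resulting scalar equation for $\tilde\gamma$ yields $\tilde\gamma=\tfrac{\bar p(N-1)}{N-\bar p}\,\tilde\alpha$, hence $\tilde\gamma/(N-1)=\tfrac{\tilde\alpha}{N}\bar p^* = p^*_\alpha$, exactly the Lebesgue exponent on the left-hand side of \eqref{ST}. Substituting back, the Hölder terms contribute a factor $\|u\|_{L^{p^*_\alpha}}^{\tilde\gamma-\tilde\alpha}$ that absorbs into the power $\|u\|_{L^{p^*_\alpha}}^{\tilde\gamma}$ produced by the Loomis--Whitney step, and taking the $\tilde\alpha$-th root produces \eqref{ST}.

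The main obstacle is purely arithmetic rather than analytic: the assumption $\bar p<N$ is precisely what makes $\tilde\gamma>\tilde\alpha>0$, so the absorption is legitimate and the exponent $1/\tilde\alpha$ on the right is positive. A minor technical point is the chain rule for $|u|^{\alpha_i}$ when $\alpha_i<1$; this is harmless because it is already built into the standing assumption $\partial_i|u|^{\alpha_i}\in L^{p_i}$ of the statement, and an approximation by smooth functions transports the identity $\partial_i v_i=(\gamma_i/\alpha_i)|u|^{\gamma_i-\alpha_i}\partial_i|u|^{\alpha_i}$ into the Sobolev setting.
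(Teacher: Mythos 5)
Your proof is correct: the paper itself does not prove this lemma (it is cited to \cite{DuMoVe}), and the argument you give — iterated one-dimensional fundamental theorem of calculus to bound $|u|^{\gamma_i}$ by a slice integral, geometric averaging via Loomis--Whitney, H\"older to peel off the $L^{p_i}$-norm of $\partial_i|u|^{\alpha_i}$, and then solving the linear system for the $\gamma_i$ so that the estimate closes — is precisely the standard Troisi-type argument and matches the method used in that reference. I checked the arithmetic: summing $\gamma_i=\alpha_i+\tfrac{\tilde\gamma(p_i-1)}{(N-1)p_i}$ yields $\tilde\gamma\tfrac{N-\bar p}{\bar p(N-1)}=\tilde\alpha$, hence $\tilde\gamma/(N-1)=\tfrac{\tilde\alpha\bar p}{N-\bar p}=p^*_\alpha$; and $p^*_\alpha-(\tilde\gamma-\tilde\alpha)/(N-1)=\tilde\alpha/(N-1)$, so after absorption and raising to the power $(N-1)/\tilde\alpha$ one lands exactly on \eqref{ST}.

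Two small points worth sharpening. First, the statement of the lemma asserts a constant $C$ depending only on $N,{\bf p},{\bf\alpha}$ (not on $\Omega$); with that normalization the result can only hold for functions that extend by zero to $W^{1,{\bf p}}(\R^N)$ (take $u\equiv 1$ on a bounded rectangle to see the failure otherwise), so "zero extension" is the right preparatory step and the alternative "reflection/extension" you mention would introduce a domain-dependent lower-order term — it is not an equivalent route here. Second, the condition $\bar p<N$ is what makes $\tilde\gamma$ finite and positive (and $\bar p^*$ meaningful); the inequality $\tilde\gamma>\tilde\alpha$, which ensures the Hölder exponents $(\gamma_i-\alpha_i)p_i'$ are positive, actually comes from $\bar p>1$ (equivalently $p_i>1$ for all $i$). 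Finally, the truncation/approximation needed to avoid the circularity of assuming $u\in L^{p^*_\alpha}$ in the Hölder step deserves a line (work with $\min\{|u|,M\}$ and let $M\to\infty$ by Fatou), though you correctly flag that the density step is the place to address it.
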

\noindent As a corollary, one gets the usual Troisi inequality (see \cite{Tro}).

\begin{rem}\label{rem:M_Troisi}
Let $\Omega \, \subseteq  \R^N$ be a rectangular domain. Then there is a constant $C=C(N, {\bf p})>0$ such that 
\begin{align}\label{est:Troisi}
 \norm{u}_{L^{\bar p^*}(\Omega)} \leq C \prod^N_{i=1} \norm{\partial_i u}_{L^{p_i}(\Omega)}^\frac1N,
\end{align}
for all $u \in W^{1, {\bf p}}_\textnormal{o}(\Omega)$. Moreover, taking both sides of \eqref{est:Troisi} to the exponent $\bar p$ and using Young's inequality on the right-hand side with the exponents $\frac{N p_i}{\bar p}$, whose inverses add up to $1$ due to the definition of $\bar p$, we obtain the following useful inequality,
\begin{align}\label{est:Troisi-application}
 \Big(\int_\Omega |u|^{\bar p^*}\d x\Big)^\frac{\bar p}{\bar p^*} \leq C \sum^N_{i=1} \int_\Omega |\partial_i u|^{p_i} \d x, \quad u \in W^{1, {\bf p}}_\textnormal{o}(\Omega).
\end{align}

\end{rem}

\noindent The following is a parabolic anisotropic Sobolev embedding, which can be found in \cite{DuMoVe}.
\begin{theo}
\label{PAS}
Let $\Omega\, \subseteq \,  \R^N$ be a rectangular domain, $\bar p<N$, $\alpha_{i}>0$ for $i=1,\dots, N$ and $\sigma\in [1, p_{\alpha}^{*}]$. For any number $\theta\in [0,\, \bar p/ \bar p^{*}]$ define 
\[
q=q(\theta, {\bf p}, {\bf \alpha})=\theta \,  p^{*}_{\alpha}+\sigma\, (1-\theta).
\]
Then for any $u \in L^{1}(0, T; W^{1,1}_{0}(\Omega))$, there exists a constant $C=C(N, {\bf p}, {\bf \alpha}, \theta, \sigma)>0$ such that
\begin{equation}
\label{PS}
\iint_{\Omega_{T}}|u|^{q} \d x \d t\leq C\, T^{1-\theta\, \frac{\bar p^{*}}{\bar p}}\left(\sup_{t\in [0, T]}\int_{\Omega}|u|^{\sigma}(x, t) \d x\right)^{1-\theta}\prod_{i=1}^{N}\left(\iint_{\Omega_{T}}|\partial_i|u|^{\alpha_{i}}|^{p_{i}}  \d x \d t\right)^{\frac{\theta\,  \bar p^{*}}{N \, p_{i}}}.
\end{equation} 

\end{theo}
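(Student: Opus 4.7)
The plan is to derive \eqref{PS} by a spatial interpolation carried out slicewise in $t$, followed by a Hölder step in the time variable. The convex combination $q=\theta p_\alpha^{*}+\sigma(1-\theta)$ is chosen precisely so that $|u|^{q}=|u|^{\theta p_\alpha^{*}}\cdot|u|^{\sigma(1-\theta)}$ factorises along exponents that sum to $1$, which is the correct structure for Hölder's inequality. I would assume first that $\theta\in(0,1)$ (automatic since $\bar p<\bar p^{*}$) and that the right-hand side is finite; the boundary values $\theta=0$ and $\theta=\bar p/\bar p^{*}$ are handled separately at the end.

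For the spatial step, at a.e. fixed $t\in(0,T)$ I would apply Hölder's inequality with conjugate exponents $1/\theta$ and $1/(1-\theta)$ to obtain
\[
\int_{\Omega}|u(x,t)|^{q}\,\d x\leq\Big(\int_{\Omega}|u|^{p_\alpha^{*}}\,\d x\Big)^{\theta}\Big(\int_{\Omega}|u|^{\sigma}\,\d x\Big)^{1-\theta}.
\]
Raising the Sobolev--Troisi inequality \eqref{ST} of Lemma \ref{SOBE} to the power $\theta p_\alpha^{*}$ and using the simplification $p_\alpha^{*}/\tilde\alpha=\bar p^{*}/N$ yields
\[
\Big(\int_{\Omega}|u|^{p_\alpha^{*}}\,\d x\Big)^{\theta}\leq C\prod_{i=1}^{N}\big\|\partial_i|u|^{\alpha_i}\big\|_{L^{p_i}(\Omega)}^{\theta\bar p^{*}/N}.
\]

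For the temporal step, I would integrate the pointwise-in-$t$ estimate over $(0,T)$, pull out the factor $\bigl(\sup_{t}\int_{\Omega}|u|^{\sigma}\,\d x\bigr)^{1-\theta}$, and apply Hölder in time to the remaining product. The natural choice assigns to the $i$-th factor the conjugate exponent $Np_i/(\theta\bar p^{*})$, so that the exponentiated norm becomes $\|\partial_i|u|^{\alpha_i}\|_{L^{p_i}(\Omega)}^{p_i}$, whose time integral equals $\iint_{\Omega_T}|\partial_i|u|^{\alpha_i}|^{p_i}\,\d x\,\d t$. The sum of the reciprocals of these temporal exponents is
\[
\sum_{i=1}^{N}\frac{\theta\bar p^{*}}{Np_i}=\frac{\theta\bar p^{*}}{\bar p},
\]
which is at most $1$ thanks to the hypothesis $\theta\leq\bar p/\bar p^{*}$. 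The deficit from unity is absorbed by the prefactor $T^{1-\theta\bar p^{*}/\bar p}$, producing exactly the right-hand side of \eqref{PS}.

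The endpoint $\theta=0$ reduces the claim to $\iint|u|^{\sigma}\d x\d t\leq T\sup_{t}\int_{\Omega}|u|^{\sigma}\,\d x$, which is trivial, while $\theta=\bar p/\bar p^{*}$ is the case in which the temporal Hölder exponents sum to exactly one and no $T$-power remains. The only substantive obstacle is bookkeeping: one has to verify at each stage that every exponent invoked in Hölder's inequality lies in $[1,\infty]$, and that the exponent produced by composing the spatial Hölder step with Lemma \ref{SOBE} matches the exponent $\theta\bar p^{*}/(Np_i)$ appearing in the statement. Once these are checked, the proof is routine.
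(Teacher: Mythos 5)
The paper states this result without proof, citing it directly from \cite{DuMoVe}, so there is no in-house argument to compare against; your proposal must therefore be assessed on its own merits. It is correct and is the standard proof of such anisotropic parabolic embeddings: slicewise spatial H\"older with exponents $1/\theta$, $1/(1-\theta)$, followed by the Sobolev--Troisi inequality \eqref{ST} (using $p_\alpha^*/\tilde\alpha=\bar p^*/N$), then temporal H\"older with exponents $N p_i/(\theta\bar p^*)$ whose reciprocals sum to $\theta\bar p^*/\bar p\le 1$ (note $\bar p\le Np_i$ always, so each exponent is $\ge 1$), with the deficit absorbed by the factor $T^{1-\theta\bar p^*/\bar p}$; the endpoint $\theta=0$ is trivial and $\theta=\bar p/\bar p^*$ requires no separate treatment since the extra time factor is $T^0=1$.
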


\noindent Next we present some variants of the Lemma of Fast Convergence. We first state and prove a rather general version, which implies the two other variants that we will utilize in the De Giorgi type iterations. 

\begin{lem}\label{lem:fastconfg-new} Let $0<\mu \leq \nu$ and let $C>0$ and $b>1$. Suppose that $(Z_n)$ is a sequence of nonnegative numbers satisfying
\begin{align}\label{cond:convergence_of_Zn}
 Z_{n+1} \leq C b^n \max\{Z_n^{1+\mu}, Z_n^{1+\nu}\}, \textnormal{ and } Z_0 \leq \min\{ C^{-\frac{1}{\mu}}, C^{-\frac{1}{\nu}} \} b^{-\frac{1}{\mu^2}}.
\end{align}
Then 
\begin{align}\label{conclusion:convergence}
 Z_n \leq b^{-\frac{n}{\mu}}Z_0, \textnormal{ and thus } \lim_{n\to\infty} Z_n = 0.
\end{align}
\end{lem}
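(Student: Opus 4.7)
The plan is to prove by induction on $n$ that $Z_n \leq b^{-n/\mu} Z_0$, from which $Z_n \to 0$ follows at once because $b > 1$. The base case $n = 0$ is trivial. For the inductive step I would split into two subcases according to whether $Z_n \leq 1$ or $Z_n > 1$; this determines which of $Z_n^{1+\mu}$ and $Z_n^{1+\nu}$ realizes the maximum in the recursion \eqref{cond:convergence_of_Zn}.

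In the regime $Z_n \leq 1$, raising to a larger exponent gives a smaller value, so $\max\{Z_n^{1+\mu}, Z_n^{1+\nu}\} = Z_n^{1+\mu}$. Inserting the inductive hypothesis into the recursion yields $Z_{n+1} \leq C b^{-n/\mu} Z_0^{1+\mu}$, and requiring this to be at most $b^{-(n+1)/\mu} Z_0$ reduces after cancellation to $C Z_0^\mu \leq b^{-1/\mu}$, which is precisely the hypothesis $Z_0 \leq C^{-1/\mu} b^{-1/\mu^2}$. In the complementary regime $Z_n > 1$, the maximum equals $Z_n^{1+\nu}$ and the analogous substitution gives $Z_{n+1} \leq C b^{n(\mu-1-\nu)/\mu} Z_0^{1+\nu}$; the desired bound reduces to $C Z_0^\nu \leq b^{(n(\nu-\mu)-1)/\mu}$, whose most stringent instance is $n=0$, reading $Z_0 \leq C^{-1/\nu} b^{-1/(\mu\nu)}$. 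Since $\mu \leq \nu$ implies $1/\mu^2 \geq 1/(\mu\nu)$, this is in turn implied by $Z_0 \leq C^{-1/\nu} b^{-1/\mu^2}$.

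Both branches are therefore covered by the joint hypothesis $Z_0 \leq \min\{C^{-1/\mu}, C^{-1/\nu}\} b^{-1/\mu^2}$, the induction closes, and \eqref{conclusion:convergence} follows. I do not anticipate a genuine obstacle: the argument is essentially exponent bookkeeping. The only conceptually noteworthy point is the exponent $-1/\mu^2$ appearing in the smallness assumption on $Z_0$; the computation above explains it transparently, since the $Z_n \leq 1$ branch demands exactly this exponent while the $Z_n > 1$ branch would be satisfied with the weaker $-1/(\mu\nu)$, so taking the smaller of the two handles both cases uniformly.
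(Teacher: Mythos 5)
Your proof is correct and follows essentially the same route as the paper: induction on the claim $Z_n \le b^{-n/\mu}Z_0$, with the smallness of $Z_0$ used exactly once to close the step. The only cosmetic difference is that you branch on $Z_n \lessgtr 1$ to pick out which power realizes the maximum, whereas the paper substitutes the inductive hypothesis directly inside the max (valid since $x\mapsto\max\{x^{1+\mu},x^{1+\nu}\}$ is monotone) and then uses $\nu/\mu\ge1$ to absorb all $b$-powers into $b^{-n/\mu}$ uniformly, avoiding the case split.
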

\begin{proof}{}
 We prove the inequality in \eqref{conclusion:convergence} by induction. The case $n=0$ is obvious. Suppose that the inequality holds for some $n$. Then by the induction assumption and \eqref{cond:convergence_of_Zn} we have 
 \begin{align}\label{est:Z_nplusone-long-calc}
  \notag Z_{n+1} \leq C b^n \max\{ (b^{-\frac{n}{\mu}}Z_0)^{1+\mu}, (b^{-\frac{n}{\mu}}Z_0)^{1+\nu}\} &= C b^n \max\{ b^{-\frac{n}{\mu}-n}Z_0^{\mu}, b^{-\frac{n}{\mu}-n\frac{\nu}{\mu}}Z_0^\nu\} Z_0 
  \\
  &\leq C b^{-\frac{n}{\mu}} \max\{Z_0^{\mu},Z_0^{\nu} \}Z_0,
 \end{align}
where in the last step we also used that $\nu/\mu \geq 1$. Using the second estimate in \eqref{cond:convergence_of_Zn} we see that 
\begin{align*}
 \max\{Z_0^{\mu},Z_0^{\nu}\} \leq C^{-1} b^{-\frac{1}{\mu}}.
\end{align*}
Combining this estimate with \eqref{est:Z_nplusone-long-calc} completes the induction.
\end{proof}
\noindent The following consequence of the previous lemma will be useful.
\begin{lem}\label{lem:fastconvg-general}
Let $\chi_{i}>0$ for $i=1, \dots, N$, and let $C>0$ and $b>1$. Suppose that $(Z_n)$ is a sequence of nonnegative numbers satisfying
\begin{equation} \label{iterHP}
Z_{n+1}\leq C\, b^{n}\, \frac{1}{N}\sum_{i=1}^{N}Z_{n}^{1+\chi_{i}}.
\end{equation} 
Denote $\chi_{\min} =\min\{\chi_{1}, \dots, \chi_{N}\}$ and $\chi_{\max}  =\max\{\chi_{1}, \dots, \chi_{N}\}$. If
\begin{align*}
Z_0 \leq \min\{ C^{-\frac{1}{\chi_{\min}}}, C^{-\frac{1}{\chi_{\max}}} \} b^{-\frac{1}{\chi_{\min}^2}},
\end{align*}
then 
\[
 Z_n \leq b^{-\frac{n}{\chi_{\min}}}Z_0 \textnormal{ and thus } \lim_{n\to \infty }Z_{n}= 0.
\]
\end{lem}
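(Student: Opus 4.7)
The plan is to reduce Lemma \ref{lem:fastconvg-general} directly to the already established Lemma \ref{lem:fastconfg-new}, by bounding the arithmetic mean on the right-hand side of \eqref{iterHP} by a maximum of two power functions of $Z_n$. This is a natural strategy because the initial smallness condition and the conclusion of Lemma \ref{lem:fastconvg-general} already match those of Lemma \ref{lem:fastconfg-new} once one identifies $\mu=\chi_{\min}$ and $\nu=\chi_{\max}$.

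The key elementary observation I would use is that for every $z\geq 0$ and every $\chi\in[\chi_{\min},\chi_{\max}]$ one has
\begin{align*}
z^{1+\chi} \leq \max\bigl\{z^{1+\chi_{\min}},\,z^{1+\chi_{\max}}\bigr\}.
\end{align*}
Indeed, if $z\leq 1$ the map $\chi\mapsto z^{1+\chi}$ is non-increasing, while if $z\geq 1$ it is non-decreasing, so in either case the supremum over a closed interval of exponents is attained at one of the endpoints. Applying this termwise to each summand $Z_n^{1+\chi_i}$ in \eqref{iterHP}, and noting that the averaging factor $1/N$ only helps, I would conclude that
\begin{align*}
Z_{n+1}\leq C\,b^{n}\,\max\bigl\{Z_n^{1+\chi_{\min}},\,Z_n^{1+\chi_{\max}}\bigr\}.
\end{align*}

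This is exactly hypothesis \eqref{cond:convergence_of_Zn} of Lemma \ref{lem:fastconfg-new} with the choice $\mu=\chi_{\min}$ and $\nu=\chi_{\max}$, and the smallness assumption imposed on $Z_0$ in the statement coincides with the one required there. Invoking Lemma \ref{lem:fastconfg-new} therefore delivers $Z_n\leq b^{-n/\chi_{\min}}Z_0$ and consequently $Z_n\to 0$, completing the argument.

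I do not expect any genuine obstacle: the result is essentially a bookkeeping corollary of the previous lemma. The only small points deserving care are the monotonicity case-split $z\leq 1$ versus $z\geq 1$ when justifying the endpoint attainment of $\chi\mapsto z^{1+\chi}$, and checking that the prefactor $1/N$ does not need to be absorbed into the constant $C$ (it only strengthens the inequality, so it can simply be dropped).
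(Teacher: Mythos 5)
Your proposal is correct and takes essentially the same approach as the paper: both reduce \eqref{iterHP} to the hypothesis of Lemma \ref{lem:fastconfg-new} by observing that each $Z_n^{1+\chi_i}$ is bounded by $\max\{Z_n^{1+\chi_{\min}}, Z_n^{1+\chi_{\max}}\}$, with $\mu=\chi_{\min}$, $\nu=\chi_{\max}$. You merely spell out the elementary monotonicity justification that the paper leaves implicit.
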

\begin{proof}{}
 The result follows directly from the previous lemma since \eqref{iterHP} implies
 \begin{align*}
  Z_{n+1} \leq C b^n \max\{Z_n^{1+\chi_{\min}}, Z_n^{1+\chi_{\max}}\}
 \end{align*}
\end{proof}

\noindent In the case that $\mu=\nu$, Lemma \ref{lem:fastconfg-new} directly implies the following result which will be used frequently. A more direct proof is available in
\cite[Lemma 7.1]{Gi}.
\begin{lem}\label{lem:fastconvg}
Let $(Y_j)^\infty_{j=0}$ be a sequence of nonnegative numbers such that
\begin{equation*}
Y_{j+1}\leq C b^j Y^{1+\delta}_j,
\end{equation*}
where $b >1$ and $C, \delta>0$. If
\begin{equation*}
Y_0\leq C^{-\frac{1}{\delta}}b^{-\frac{1}{\delta^2}},
\end{equation*}
then $(Y_j)$ converges to zero as $j\to\infty$.
\end{lem}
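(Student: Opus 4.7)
The plan is to deduce this lemma as an immediate corollary of Lemma \ref{lem:fastconfg-new} by setting $\mu = \nu = \delta$. Under this specialization the recurrence $Z_{n+1} \leq C b^n \max\{Z_n^{1+\mu}, Z_n^{1+\nu}\}$ collapses to the hypothesis $Y_{n+1} \leq C b^n Y_n^{1+\delta}$, and the smallness bound $\min\{C^{-1/\mu}, C^{-1/\nu}\} b^{-1/\mu^2}$ reduces to exactly $C^{-1/\delta} b^{-1/\delta^2}$, which matches the hypothesis on $Y_0$. The conclusion $Z_n \leq b^{-n/\mu} Z_0$ then reads $Y_n \leq b^{-n/\delta} Y_0$, which forces $Y_n \to 0$ since $b>1$.

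Alternatively, I would give a self-contained one-line induction establishing the stronger quantitative statement $Y_n \leq b^{-n/\delta} Y_0$ for all $n$. The base case $n=0$ is trivial, and assuming the bound at step $n$ the recurrence yields
\[
Y_{n+1} \leq C b^n \bigl(b^{-n/\delta} Y_0\bigr)^{1+\delta} = C b^{-n/\delta} Y_0^{\delta} \cdot Y_0.
\]
Inserting the hypothesis $Y_0^\delta \leq C^{-1} b^{-1/\delta}$ then produces $Y_{n+1} \leq b^{-(n+1)/\delta} Y_0$, closing the induction and delivering the asserted geometric decay to zero.

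There will not be a real obstacle here: the content of the lemma lies entirely in the choice of the smallness threshold $C^{-1/\delta} b^{-1/\delta^2}$ on $Y_0$, calibrated precisely so that the nonlinear amplification $Y_n^{1+\delta}$ is outpaced by the geometric factor $b^{-n/\delta}$ at each step. The lemma is purely technical and will serve as the iteration engine for the De Giorgi-type arguments in later sections; the implication chain Lemma \ref{lem:fastconfg-new} $\Rightarrow$ Lemma \ref{lem:fastconvg-general} $\Rightarrow$ Lemma \ref{lem:fastconvg} exhibits the expected nesting in decreasing generality, with this final statement being the simplest (and historically oldest) instance, also proved in \cite{Gi}.
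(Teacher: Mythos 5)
Your proposal is correct and matches the paper exactly: the paper also obtains this lemma as the special case $\mu=\nu=\delta$ of Lemma \ref{lem:fastconfg-new} (citing \cite[Lemma 7.1]{Gi} for a direct proof), and your self-contained induction is precisely that direct argument, carried out correctly.
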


\section{Continuity in time and mollified weak formulation}\label{sec:time-cont}
In this subsection we show that $|u|^{\alpha-1}u$ is continuous in time as a map into $L^{\frac1\alpha+1}_{\textrm{loc}}(\Omega)$. The proof is adapted from \cite{St}. We start with a lemma.
\begin{lem}\label{lem:time-cont}
Suppose that $u$ is a weak solution in the sense of Definition \ref{def:weaksol} and define 
\begin{align*}
\mathcal{V}:=\big\{ w\in L^P(\Omega_T)\, |\, w \in L^{\bf p}_\textnormal{loc}(0,T; W^{1, {\bf p}}_\textnormal{loc}(\Omega)), \, \partial_t w \in L^{\alpha+1}(0,T; L^{\alpha+1}_\textnormal{loc}(\Omega)) \big\}.
\end{align*}
Then, for every $\zeta\in C^\infty_o(\Omega_T,\R_{\geq 0})$ and $w\in \mathcal V$ we have
\begin{align}
\label{eq:time_1}
\iint_{\Omega_T} \partial_t \zeta \b_\alpha[u,w] \d x\d t = \iint_{\Omega_T} A(x,t,u,\nabla u) \cdot \nabla[\zeta(u - w)] + \zeta(|u|^{\alpha - 1} u - |w|^{\alpha -1} w )\partial_t w \d x\d t.
\end{align}
\end{lem}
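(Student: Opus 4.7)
The identity is the rigorous form of a formal chain-rule computation: writing $v:=|u|^{\alpha-1}u$, if $u$ were smooth in time then testing $\partial_t v=\nabla\cdot A$ with $\zeta(u-w)$ and using the pointwise relation $u\,\partial_t v=\tfrac{\alpha}{\alpha+1}\partial_t|u|^{\alpha+1}$ would immediately yield \eqref{eq:time_1}. Since $u$ has no a priori time derivative, the plan is to regularise the equation via the exponential mollifier of Lemma~\ref{expmolproperties}, in the spirit of \cite{St}. Testing Definition~\ref{def:weaksol} with $\varphi=\phi_{\bar h}$ for $\phi\in C^\infty_o(\Omega_T)$, and exploiting the adjoint identity $\iint f\,g_{\bar h}=\iint f_h\,g$ together with $\partial_t\phi_{\bar h}=h^{-1}(\phi_{\bar h}-\phi)$, one arrives at the \emph{mollified weak form}
\[
\iint_{\Omega_T}\partial_t v_h\,\phi\,\d x\d t+\iint_{\Omega_T}A(x,t,u,\nabla u)_h\cdot\nabla\phi\,\d x\d t=0,
\]
which extends by density to every $\phi$ compactly supported in $\Omega_T$ with the integrability of a member of $\mathcal V$.

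Next, set $\beta:=1/\alpha$ and introduce the inverse mollification $\mathfrak u_h:=|v_h|^{\beta-1}v_h$; since $\beta>1$, this is $C^1$ in $v_h$ and $\mathfrak u_h\to u$ as $h\to 0$. Test the mollified equation above with the admissible choice $\phi=\zeta(\mathfrak u_h-w)$. For the $\mathfrak u_h$-contribution to the time integral, the pointwise identity $\mathfrak u_h\,\partial_t v_h=(\beta+1)^{-1}\partial_t|v_h|^{\beta+1}$ combined with integration by parts in $t$ (the boundary contributions vanish because $v_h(\cdot,0)=0$ and $\zeta$ vanishes at $t=T$) gives
\[
\iint\partial_t v_h\,\zeta\,\mathfrak u_h\,\d x\d t=-\tfrac{1}{\beta+1}\iint\partial_t\zeta\,|v_h|^{\beta+1}\,\d x\d t;
\]
the time-smoothness of $v_h$ together with $\partial_t w\in L^{\alpha+1}$ produces
\[
-\iint\partial_t v_h\,\zeta\,w\,\d x\d t=\iint v_h\,w\,\partial_t\zeta\,\d x\d t+\iint v_h\,\zeta\,\partial_t w\,\d x\d t.
\]

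Letting $h\to 0$, Lemma~\ref{expmolproperties} yields $v_h\to v$, $A_h\to A$, and hence $\mathfrak u_h\to u$, $|v_h|^{\beta+1}\to|u|^{\alpha+1}$, in the relevant Lebesgue spaces. Passing to the limit in the mollified equation one obtains
\[
-\tfrac{\alpha}{\alpha+1}\iint\partial_t\zeta\,|u|^{\alpha+1}+\iint v\,w\,\partial_t\zeta+\iint v\,\zeta\,\partial_t w+\iint A\cdot\nabla[\zeta(u-w)]=0.
\]
A short computation gives $\b_\alpha[u,w]=\tfrac{\alpha}{\alpha+1}|u|^{\alpha+1}+\tfrac{1}{\alpha+1}|w|^{\alpha+1}-wv$, and combining this with the scalar integration by parts $(\alpha+1)^{-1}\iint\partial_t\zeta\,|w|^{\alpha+1}=-\iint\zeta\,|w|^{\alpha-1}w\,\partial_t w$ (valid since $w\in\mathcal V$) transforms the last display into \eqref{eq:time_1}. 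The principal obstacle in this scheme is the passage to the limit in $\iint A_h\cdot\zeta\,\nabla\mathfrak u_h$, as $\nabla\mathfrak u_h=\beta|v_h|^{\beta-1}\nabla v_h$ carries a nonlinear factor; continuity of $s\mapsto|s|^{\beta-1}$ (from $\beta>1$), the strong $L^{p_i}$-convergence of $(\nabla v)_h$, and the local integrability afforded by $u\in L^P$ jointly permit a dominated-convergence argument identifying the limit as $\nabla u$.
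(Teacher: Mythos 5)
Your route is genuinely different from the paper's, but it contains a gap that I do not see how to close. The whole argument hinges on using $\phi=\zeta(\mathfrak u_h-w)$ with $\mathfrak u_h=|v_h|^{\beta-1}v_h$, $v=|u|^{\alpha-1}u$, as a test function, and on the formula $\nabla \mathfrak u_h=\beta|v_h|^{\beta-1}(\nabla v)_h$. This presupposes that $v=|u|^{\alpha-1}u$ has spatial weak derivatives with $\partial_i v\in L^{p_i}_{\textrm{loc}}$ (you explicitly invoke ``the strong $L^{p_i}$-convergence of $(\nabla v)_h$''). But for $\alpha\in(0,1)$ the map $s\mapsto|s|^{\alpha-1}s$ is \emph{not} Lipschitz — its derivative $\alpha|s|^{\alpha-1}$ blows up at $s=0$ — so from $u\in L^{\bf p}(0,T;W^{1,{\bf p}}_{\textrm{loc}}(\Omega))$ one cannot conclude that $|u|^{\alpha-1}u\in W^{1,1}_{\textrm{loc}}$ in space, let alone that $\partial_i v=\alpha|u|^{\alpha-1}\partial_i u$ lies in $L^{p_i}$. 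Consequently $\nabla v_h$, and with it $\nabla\mathfrak u_h$, is not defined, the elliptic term $\iint A_h\cdot\nabla[\zeta\mathfrak u_h]$ is not meaningful, and the dominated-convergence argument you sketch for identifying the limit with $\nabla u$ has nothing to dominate: even formally, $|v_h|^{\beta-1}(\nabla v)_h$ is a product of two mollifications of nonlinear expressions in $u$ and does not collapse to $(\nabla u)_h$. (A secondary, more repairable point: your ``mollified weak form'' tested against general $\phi$ requires handling the boundary contribution at $t=0$, since $\phi_{\bar h}$ does not vanish near $t=0$ even when $\phi$ does; in the paper this term is controlled only \emph{after} the time continuity — the very statement this lemma is a step towards — has been established. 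For $\zeta$ compactly supported in time this can be patched, but it deserves care.)

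The paper avoids the singular composition entirely by mollifying $u$ itself rather than $|u|^{\alpha-1}u$: it tests the original weak formulation with $\varphi=\zeta(w-u_h)$, which is admissible because $\nabla u_h=(\nabla u)_h\in L^{p_i}$. The price is that the problematic time term $\iint\zeta\,|u|^{\alpha-1}u\,\partial_t u_h$ cannot be computed exactly by a chain rule; instead one replaces $|u|^{\alpha-1}u$ by $|u_h|^{\alpha-1}u_h$ up to an error whose sign is known, since Lemma \ref{expmolproperties}\,(ii) gives $\big(|u_h|^{\alpha-1}u_h-|u|^{\alpha-1}u\big)\partial_t u_h=\tfrac1h\big(|u_h|^{\alpha-1}u_h-|u|^{\alpha-1}u\big)(u-u_h)\leq 0$ by monotonicity. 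This yields only the inequality ``$\leq$'' in \eqref{eq:time_1}; the reverse inequality is then obtained by repeating the argument with the backward mollification $u_{\overline h}$, for which the error has the opposite sign. If you want to salvage your one-pass identity, you would first have to prove the spatial Sobolev regularity of $|u|^{\alpha-1}u$, which is not available here; otherwise you should switch to the two-sided monotonicity argument.
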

\begin{proof}[Proof]
Let $w\in \mathcal V$, $\zeta \in C^\infty_o(\Omega_T,\R_{\geq 0})$ and choose
$$
\varphi =\zeta \left( w - u_h \right)
$$
as test function in \eqref{eq:weak_form}. The local $L^P$-integrability of $u$ and $w$ together with the $p_i$-integrability of each $\partial_i u$ and $\partial_i w$ and the structure conditions of the vector field $A$ guarantee that the test function can be justified by approximation with smooth compactly supported test functions. Our goal is to pass to the limit $h\to 0$. It follows from Lemma \ref{expmolproperties}, Remark \ref{rem:expmol_local_integ}  and the aforementioned integrability properties that
\begin{align*}
\iint_{\Omega_T}A(x,t,u,\nabla u)\cdot \nabla \varphi \d t \d t \xrightarrow[h\to 0]{} \iint_{\Omega_T} A(x,t,u,\nabla u)\cdot \nabla [\zeta (w - u)]\d x \d t.
\end{align*} Note that Lemma \ref{expmolproperties} (ii) implies
$$
\big( |u_h|^{\alpha -1}u_h -|u|^{\alpha - 1} u\big) \partial_t u_h\leq 0,
$$
which shows that we can treat the parabolic part as follows.
\begin{align*}
\iint_{\Omega_T} |u|^{\alpha - 1}u \partial_t \varphi \d x\d t &= \iint_{\Omega_T} \zeta |u|^{\alpha - 1}u \partial_t w \d x\d t -\iint_{\Omega_T} \zeta |u_h|^{\alpha -1}u_h \partial_t u_h \d x\d t
\\
&\quad+ \iint_{\Omega_T} \zeta \big( |u_h|^{\alpha -1} u_h - |u|^{\alpha -1}u \big) \partial_t u_h \d x\d t+ \iint_{\Omega_T} \partial_t \zeta |u|^{\alpha -1}u ( w - u_h ) \d x\d t
\\
&\leq \iint_{\Omega_T} \zeta |u|^{\alpha - 1}u \partial_t w \d x\d t + \iint_{\Omega_T} \tfrac{1}{\alpha+1} \partial_t \zeta |u_h|^{\alpha + 1} \d x\d t
\\
&\quad +\iint_{\Omega_T} \partial_t \zeta |u|^{\alpha -1}u ( w - u_h ) \d x\d t
\\
&\xrightarrow[h\to 0]{} \iint_{\Omega_T} \zeta |u|^{\alpha - 1}u \partial_t w \d x\d t + \iint_{\Omega_T} \partial_t \zeta \big(\tfrac{1}{\alpha+1} |u|^{\alpha + 1} + |u|^{\alpha -1} u(w - u) \big) \d x\d t
\\
&\quad =\iint_{\Omega_T} \zeta  (|u|^{\alpha -1}u - |w|^{\alpha -1}w) \partial_t w\d x\d t -\iint_{\Omega_T} \partial_t \zeta \b_\alpha[u,w] \d x \d t,
\end{align*}
This shows ``$\leq$'' in \eqref{eq:time_1}. The reverse inequality can be derived in the same way by taking
$$
\varphi =\zeta \left( w-[u]_{\overline h} \right)
$$
as test function. \end{proof}

\begin{theo}\label{cont_into_Lbetaplusone}
Let $u$ be a weak solution in the sense of Definition \ref{def:weaksol}. Then
\\
$|u|^{\alpha-1}u\in C([0,T];L^{\frac1\alpha + 1}_\textnormal{loc}(\Omega))$.
\end{theo}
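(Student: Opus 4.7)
The plan is to reduce the continuity statement to a smallness bound for the integrated quantity $\int_K\b_\alpha[u(\cdot,s),u(\cdot,t)]\,\d x$ on a compact $K\subset\Omega$, and then to obtain this bound by inserting into the identity of Lemma~\ref{lem:time-cont} a carefully chosen time-independent competitor $w$.

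For the reduction, I would combine Lemma~\ref{estimates:boundary_terms}(i) with Lemma~\ref{lem:elementary_real} applied at exponent $\gamma=(\alpha+1)/(2\alpha)>1$. Writing $A(r):=|r|^{\alpha-1}r$ and $B(r):=|r|^{(\alpha-1)/2}r$, the algebraic relation $B(r)=|A(r)|^{(1-\alpha)/(2\alpha)}A(r)$ casts Lemma~\ref{lem:elementary_real} in the form $|A(u_1)-A(u_2)|^{(1+\alpha)/(2\alpha)}\leq c\,|B(u_1)-B(u_2)|$. Squaring, integrating over $K$, and applying Lemma~\ref{estimates:boundary_terms}(i) yields
\[
\int_K \bigl||u(\cdot,s)|^{\alpha-1}u(\cdot,s)-|u(\cdot,t)|^{\alpha-1}u(\cdot,t)\bigr|^{(\alpha+1)/\alpha}\d x\leq c\int_K\b_\alpha[u(\cdot,s),u(\cdot,t)]\d x,
\]
reducing the claim to a smallness estimate for the right-hand side as $s\to t$.

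The key step is to apply Lemma~\ref{lem:time-cont} with the time-independent competitor $w(x,t'):=\eta_1(x)u(x,t_0)$, where $\eta_1\in C^\infty_o(\Omega)$ is a spatial cutoff equal to $1$ on the compact support of the localization $\eta$ introduced below, and $t_0$ is chosen so that $u(\cdot,t_0)$ lies in $L^P_\textnormal{loc}(\Omega)\cap W^{1,{\bf p}}_\textnormal{loc}(\Omega)$ (which holds at almost every $t_0$). Since $\partial_{t'}w=0$, the identity \eqref{eq:time_1} collapses to
\[
\iint_{\Omega_T}\partial_{t'}\zeta\,\b_\alpha[u,w]\d x\d t' = \iint_{\Omega_T}A(x,t',u,\nabla u)\cdot\nabla[\zeta(u-w)]\d x\d t'.
\]
Taking $\zeta(x,t')=\eta(x)\psi_\sigma(t')$ with $\eta\in C^\infty_o(\Omega)$ equal to $1$ on $K$ and $\psi_\sigma$ a smooth approximation of $\chi_{[t_0,\tau]}$, and sending $\sigma\to 0$ at Lebesgue points of the $L^1$-map $t'\mapsto\int_\Omega\eta\,\b_\alpha[u(\cdot,t'),u(\cdot,t_0)]\d x$, the left-hand side becomes $-\int_\Omega\eta\,\b_\alpha[u(\cdot,\tau),u(\cdot,t_0)]\d x$, since its counterpart evaluated at $t_0$ vanishes identically.

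The remaining task is to show that the right-hand side tends to zero as $\tau\to t_0$. Splitting via Leibniz and using the structure conditions \eqref{cond:structure1}--\eqref{cond:structure2}, the contribution involving $\eta\nabla u$ is controlled by absolute continuity of the Lebesgue integral of $\sum_i|\partial_i u|^{p_i}$ over $[t_0,\tau]\times\supp\eta$, while the contributions involving $\nabla\eta$ and $\eta\nabla u(\cdot,t_0)$ are estimated by H\"older's inequality and produce factors of the form $(\tau-t_0)^{1/p_i}$ multiplied by quantities that are finite for our chosen $t_0$. Combining with the reduction above yields the desired modulus of continuity for $\tau\mapsto|u(\cdot,\tau)|^{\alpha-1}u(\cdot,\tau)\in L^{(\alpha+1)/\alpha}(K)$ at almost every base point $t_0$. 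The main obstacle is then upgrading this a.e.-valid estimate to genuine continuity on the entire interval $[0,T]$ (including the endpoint $t=0$): this requires fixing a canonical pointwise representative, intersecting the sets of good times associated with a countable exhaustion of $\Omega$ by compact sets, and finally extending continuity by density using the uniform-in-$t_0$ modulus just obtained, the endpoint being accommodated by the global-in-time integrability built into Definition~\ref{def:weaksol}.
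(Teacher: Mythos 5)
Your plan correctly identifies the reduction to bounding $\int_K \b_\alpha[u(\cdot,\tau),u(\cdot,t_0)]\,\d x$, uses the same elementary inequalities (Lemmas~\ref{estimates:boundary_terms}(i) and \ref{lem:elementary_real}), and employs the same identity from Lemma~\ref{lem:time-cont}. However, the final step hides a genuine gap. The modulus you obtain at a ``good'' base time $t_0$ is \emph{not} uniform in $t_0$: after the Leibniz split, the contribution coming from $\eta\,\nabla u(\cdot,t_0)$ is of the order
\begin{equation*}
\int_{t_0}^{\tau}\norm{A_i(\cdot,t')}_{L^{p_i'}(\supp\eta)}\,\d t' \;\cdot\;\norm{\partial_i u(\cdot,t_0)}_{L^{p_i}(\supp\eta)},
\end{equation*}
and the second factor is only $L^{p_i}$ in $t_0$; it is finite for a.e.\ $t_0$ but not uniformly bounded, so the product does not vanish at a rate independent of $t_0$. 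The phrase ``extending continuity by density using the uniform-in-$t_0$ modulus just obtained'' therefore invokes an ingredient the construction does not supply. Without a $t_0$-uniform modulus, the a.e.-in-$t_0$ local estimates do not glue into a continuous representative on all of $[0,T]$; in particular they give nothing at $t=0$, which is never itself a good time (there is no slice $u(\cdot,0)\in W^{1,\bf p}_{\textnormal{loc}}$ to use as a competitor). There is also a secondary complication: the Lebesgue-point set in $\tau$ at which the integral identity holds pointwise depends on $t_0$, so the set-intersection step is over an uncountable family.

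The paper sidesteps exactly this obstruction by choosing a \emph{time-dependent} competitor $w=u_{\bar h}$ (the backward exponential mollification). By Lemma~\ref{expmolproperties}(iv), $w^\alpha$ is genuinely continuous on $[0,T]$ into $L^{(\alpha+1)/\alpha}_{\textnormal{loc}}$, and the key inequality \eqref{gs} has a right-hand side that is manifestly \emph{independent of the slice time $\tau$}. Sending $h\to 0$ therefore gives uniform-in-$\tau$ convergence $w^\alpha\to u^\alpha$ on a dense set, and continuity passes to the uniform limit. That $\tau$-independent error bound is precisely what a competitor frozen at a single time $t_0$ cannot provide. If you want to pursue your approach, you would need to replace the fixed-time competitor by an approximating family whose error against $u$ is controlled uniformly in time — which is exactly what the exponential mollification delivers.
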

\begin{proof}[Proof]
We prove continuity on the interval $[0,\tfrac12 T]$ and describe later how the argument can be modified to show continuity also on $[\tfrac12 T,T]$, thus completing the proof. We first note that due to Lemma \ref{expmolproperties}, $w:= u_{\bar{h}}$ belongs to the set of admissible comparison functions $\mathcal{V}$ of Lemma \ref{lem:time-cont}. Furthermore, Lemma \ref{expmolproperties} (iv) and Remark \ref{rem:expmol_local_integ} guarantee that $w$ is continuous $[0,T]\to L^{\alpha + 1}_\textrm{loc}(\Omega)$. Since $1/\alpha >1$ we obtain from Lemma \ref{lem:elementary_real} that
\begin{align*}
||w(x,s)|^{\alpha-1} w(x,s) - |w(x,t)|^{\alpha - 1}w(x,t)|^{\frac1\alpha + 1}\leq |w(x,s)-w(x,t)|^{\alpha + 1},
\end{align*}
which shows that $|w|^{\alpha -1} w$ is continuous $[0,T]\to L^{\frac1\alpha+1}_\textrm{loc}(\Omega)$. We will show that $|u|^{\alpha -1} u$ is essentially the uniform limit on the time interval $[0,\tfrac12 T]$ of the functions $|w|^{\alpha -1}w$ as $h\to 0$, and the continuity will follow from this.
For a compact set $K\subset \Omega$ we take $\eta \in C^\infty_o(\Omega;[0,1])$ such that $\eta=1$ on $K$ and $|\nabla \eta|\leq C_K$. Furthermore, take $\psi\in C^\infty([0,T];[0,1])$ with $\psi=1$ on $[0,\tfrac12 T]$, $\psi=0$ on $[\tfrac34 T,T]$ and $|\psi'|\leq \tfrac8T$. For $\tau\in (0,\tfrac12 T)$ and $\varepsilon>0$ so small that $\tau+\varepsilon< \tfrac12 T$ we define
\begin{align*}
\chi^\tau_\varepsilon(t)=
\begin{cases}
0, & t<\tau \\
\varepsilon^{-1}(t-\tau), & t\in [\tau, \tau+\varepsilon] \\
1, & t> \tau+\varepsilon.
\end{cases}
\end{align*}
We use \eqref{eq:time_1} with $\zeta=\eta\chi^\tau_\varepsilon\psi$ and $w=u_{\bar{h}}$ to obtain
\begin{align*}
\varepsilon^{-1} \int^{\tau+\varepsilon}_\tau\int_\Omega \b_\alpha[u,u_{\bar{h}}]\eta \d x\d t &= \iint_{\Omega_T} A(x,t,u,\nabla u) \cdot \nabla[\eta(u - u_{\bar{h}})]\chi^\tau_\varepsilon\psi \d x \d t
\\
&\quad + \iint_{\Omega_T}\eta\chi^\tau_\varepsilon\psi (|u|^{\alpha-1}u - |u_{\bar{h}}|^{\alpha-1}u_{\bar{h}})\partial_t u_{\bar{h}} \d x\d t - \iint_{\Omega_T} \b_\alpha[u,u_{\bar{h}}]\eta \psi'\d x \d t
\\
&\leq \sum^N_{i=1} \iint_{\supp \eta \times (0,T)}  |A_i(x,t,u,\nabla u)| (|\partial_i u - (\partial_i u)_{\bar{h}}| + |\partial_i \eta| |u - u_{\bar{h}}|)\d x \d t
\\
& \quad + \frac{8}{T}\iint_{\supp \eta \times (\frac12 T,\frac34 T)} \b_\alpha[u,u_{\bar{h}}]\d x \d t.
\end{align*}
Here we were able to drop the term involving $\partial_t w $ since Lemma \ref{expmolproperties} (ii) shows that the factors $\partial_t w$ and $(|u|^{\alpha-1}u - |w|^{\alpha-1}w)$ are of opposite sign, and hence their product is nonpositive. Passing to the limit $\varepsilon\to 0$ we see that
\begin{align}\label{gs}
\int_K \b_\alpha [u,u_{\bar{h}}](x,\tau) \d x & \leq C_K \sum^N_{i=1} \iint_{\supp \eta \times (0,T)}  |A_i(x,t,u,\nabla u)| (|\partial_i u - (\partial_i u)_{\bar{h}}| + |u - u_{\bar{h}}|)\d x \d t
\notag
\\
 & \quad + \frac{8}{T}\iint_{\supp \eta \times (\frac12 T,\frac34 T)} \b_\alpha[u,u_{\bar{h}}]\d x \d t
\end{align}
for all $\tau\in [0,\tfrac12 T]\setminus N_h$, where $N_h$ is a set of measure zero. Note that the integrand on the left-hand side can be estimated using Lemma \ref{estimates:boundary_terms} (i) and the fact that $\frac{\alpha+1}{2\alpha} > 1$ as follows:
\begin{align*}
||u|^{\alpha -1}u - |u_{\bar{h}}|^{\alpha -1}u_{\bar{h}}|^{\frac1\alpha + 1} = ||u|^{\alpha -1}u - |u_{\bar{h}}|^{\alpha -1}u_{\bar{h}}|^{2\frac{(\alpha + 1)}{2\alpha} } \leq c\big||u|^\frac{\alpha-1}{2}u - |u_{\bar{h}}|^\frac{\alpha-1}{2}u_{\bar{h}} |^2 \leq c \b_\alpha[u,u_{\bar{h}}].
\end{align*}
For the term on the last line of \eqref{gs} we can use Lemma \ref{estimates:boundary_terms} (iii) to make the estimate
\begin{align*}
\b_\alpha[u,u_{\bar{h}}] &\leq c \big|u - u_{\bar{h}} \big|^{1+\alpha} =c |u - u_{\bar{h}}|^\alpha |u-u_{\bar{h}} |
\leq c(|u|^\alpha + |u_{\bar{h}}|^\alpha)|u - u_{\bar{h}}|.
\end{align*}
The first factor stays bounded in $L^{\frac{\alpha + 1}{\alpha}}$ as $h\to 0$ and the second factor converges to zero in $L^{\alpha + 1}$ as $h\to 0$. The structure conditions of $A$, the integrability properties of $u$ and Lemma \ref{expmolproperties} (i) and (iii) show that also the first integral on the right-hand side of \eqref{gs} converges to zero as $h\to 0$. Picking now a sequence $h_j\to 0$ and $w_j= u_{\bar{h}_j}$ and $N:= \cup N_{h_j}$ (which has measure zero) we see that \eqref{gs} combined with the previous observations implies
\begin{align}\label{unif_limit}
\lim_{j\to \infty} \sup_{\tau\in [0,\frac{1}{2}T]\setminus N} \int_K ||u|^{\alpha -1}u -|w_j|^{\alpha - 1} w_j |^{\frac1\alpha + 1}(x,\tau) \d x = 0.
\end{align}
As noted earlier, each $|w_j|^{\alpha-1} w_j$ is continuous as a map $[0,T]\to L^{\frac1\alpha + 1}(K)$. This fact together with the uniform limit \eqref{unif_limit} on the dense set $[0,\frac12T]\setminus N$ and the completeness of $L^{\frac1\alpha + 1}(K)$ show that $|w_j|^{\alpha -1}w_j$ converges uniformly on $[0, \frac12T]$ to a limit function which is continuous into $L^{\frac1\alpha + 1}(K)$. Due to \eqref{unif_limit} this limit is a representative of $|u|^{\alpha - 1}u$.
The continuity on $[\tfrac12T,T]$ follows from a similar argument with $w=u_h$ and with $\psi$ and $\chi^\tau_\varepsilon$ mirrored on the interval $[0,T]$ under the map $t\mapsto T-t$.\end{proof}

\vskip0.2cm \noindent 
Now that we have established the continuity in time it is possible to show that weak solutions satisfy a mollified weak formulation.
\begin{lem}\label{lem:mollified}
Let $u$ be a weak solution in the sense of Definition \ref{def:weaksol}. Then we have
\begin{align}\label{h-averaged-form}
\iint_{\Omega_T} [A(x,\cdot,u, \nabla u)]_h\cdot \nabla \phi+\partial_t [|u|^{\alpha -1}u]_h \phi\d x\d t- \int_\Omega |u|^{\alpha -1}u \phi_{\bar{h}}(x,0) \d x = 0 
\end{align}
for all $\phi \in C^\infty(\Omega \times [0,T])$ with support contained in $K\times [0,\tau]$ ,where $K\subset \Omega$ is compact and $\tau \in (0,T)$. Here $u(x,0)$ refers to the value at time zero of the continuous representative of $|u|^{\alpha -1}u$ as a map $[0,T]\to L^{\frac1\alpha+1}(K)$.
\end{lem}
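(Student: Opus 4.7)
The strategy is to insert a suitably cut-off backward mollification of $\phi$ into Definition \ref{def:weaksol}, manipulate the resulting identity using the duality of the two time mollifications, and then pass to the limit, with the initial boundary term being produced by the continuity established in Theorem \ref{cont_into_Lbetaplusone}. Concretely, choose $\chi_\delta \in C^\infty([0,T];[0,1])$ with $\chi_\delta \equiv 0$ on $[0,\delta/2]$, $\chi_\delta \equiv 1$ on $[\delta,T]$, $\chi_\delta'\ge 0$ and $\int_0^T \chi_\delta'\,dt = 1$, so that $\chi_\delta'$ is a Dirac sequence at $t=0$. Since $\supp \phi \subset K\times[0,\tau]$ with $\tau<T$, the explicit kernel in \eqref{def:moll} gives $\phi_{\bar h}(x,t)=0$ for $t\ge \tau$, and Lemma \ref{expmolproperties}(iii) preserves spatial support; combined with $\chi_\delta$ this makes
\begin{align*}
\varphi_\delta := \chi_\delta\,\phi_{\bar h}
\end{align*}
compactly supported in $\Omega\times(0,T)$ and smooth, hence admissible in \eqref{eq:weak_form}.

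The next step is algebraic. Expanding $\partial_t \varphi_\delta = \chi_\delta'\,\phi_{\bar h} + \chi_\delta\,\partial_t\phi_{\bar h}$ and $\nabla\varphi_\delta = \chi_\delta(\nabla\phi)_{\bar h}$ (by Lemma \ref{expmolproperties}(iii)), the weak formulation reads
\begin{align*}
\iint_{\Omega_T} A\cdot\chi_\delta(\nabla\phi)_{\bar h}\,dx\,dt \;-\; \iint_{\Omega_T} |u|^{\alpha-1}u\,\chi_\delta'\,\phi_{\bar h}\,dx\,dt \;-\; \iint_{\Omega_T} |u|^{\alpha-1}u\,\chi_\delta\,\partial_t\phi_{\bar h}\,dx\,dt \;=\; 0.
\end{align*}
The key tool is the duality of the two mollifications,
\begin{align*}
\iint_{\Omega_T} f\,g_{\bar h}\,dx\,dt \;=\; \iint_{\Omega_T} f_h\,g\,dx\,dt,
\end{align*}
which is a one-line Fubini computation from \eqref{def:moll}. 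Componentwise it converts $\iint A\cdot(\nabla\phi)_{\bar h}$ into $\iint [A]_h\cdot\nabla\phi$. For the parabolic piece, Lemma \ref{expmolproperties}(ii) supplies $\partial_t\phi_{\bar h} = h^{-1}(\phi_{\bar h}-\phi)$ and $\partial_t[|u|^{\alpha-1}u]_h = h^{-1}(|u|^{\alpha-1}u - [|u|^{\alpha-1}u]_h)$; using these together with the same duality identity rearranges $-\iint |u|^{\alpha-1}u\,\partial_t\phi_{\bar h}$ into $\iint \partial_t[|u|^{\alpha-1}u]_h\,\phi$.

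Finally, I would pass $\delta\to 0$. Since $\chi_\delta\to 1$ a.e.\ on $(0,T)$, dominated convergence—justified via \eqref{cond:structure2}, the $L^p$-contractivity of the mollification (Lemma \ref{expmolproperties}(i)), and the integrability class of Definition \ref{def:weaksol}—carries the limit through every term that carries $\chi_\delta$ as a pure multiplier, producing the first and third terms of \eqref{h-averaged-form}. The main obstacle, and the only subtle point, is the boundary term
\begin{align*}
-\iint_{\Omega_T} |u|^{\alpha-1}u\,\chi_\delta'\,\phi_{\bar h}\,dx\,dt.
\end{align*}
Here $\phi_{\bar h}$ is smooth in time (the mollification of an $L^1$ function is $C^\infty$ in the mollified variable), and Theorem \ref{cont_into_Lbetaplusone} yields continuity of $t\mapsto |u|^{\alpha-1}u(\cdot,t)$ into $L^{(\alpha+1)/\alpha}_{\mathrm{loc}}(\Omega)$; Hölder's inequality on $K$ then makes $t\mapsto \int_K |u|^{\alpha-1}u\,\phi_{\bar h}(x,t)\,dx$ continuous on $[0,T]$. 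Because $\chi_\delta'$ is a nonnegative Dirac sequence at $t=0$, this term converges to
\begin{align*}
-\int_\Omega |u|^{\alpha-1}u(x,0)\,\phi_{\bar h}(x,0)\,dx,
\end{align*}
where $|u|^{\alpha-1}u(\cdot,0)$ refers to the continuous representative of Theorem \ref{cont_into_Lbetaplusone}. Collecting the three limits yields exactly \eqref{h-averaged-form}. The role of Theorem \ref{cont_into_Lbetaplusone} is indispensable: without the time continuity the trace $|u|^{\alpha-1}u(\cdot,0)$ would be pointwise-in-time meaningless, and no such initial datum could appear in \eqref{h-averaged-form}.
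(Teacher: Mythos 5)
Your proof is correct and follows essentially the same route as the paper's. You both insert a time cutoff near $t=0$ multiplied by $\phi_{\bar h}$ into the weak formulation, use Fubini duality to shift the exponential mollification between the two factors, apply the time-derivative identities of Lemma \ref{expmolproperties}(ii), and then recover the boundary term from the concentration of the cutoff derivative combined with the continuity of $t\mapsto|u|^{\alpha-1}u(\cdot,t)$ into $L^{(\alpha+1)/\alpha}_{\textnormal{loc}}(\Omega)$ from Theorem \ref{cont_into_Lbetaplusone}. The only cosmetic difference is your choice of a $C^\infty$ cutoff $\chi_\delta$ with $\chi_\delta'$ a Dirac sequence, whereas the paper uses the piecewise-linear $\eta_\varepsilon(t)=\min\{t/\varepsilon,1\}$; your choice is slightly cleaner regarding admissibility of the test function, while the paper's gives the boundary term explicitly as an average $\varepsilon^{-1}\int_0^\varepsilon(\cdot)\,dt$ which it then splits into $\phi_{\bar h}(0)$ plus a uniformly small remainder. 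Both close the argument identically.
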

\begin{proof}[Proof]
Let $\phi$ be as in the statement of the Lemma. Consider the piecewise smooth function
\begin{align*}
\eta_\varepsilon (t):=
\begin{cases}
\, \frac{t}{\varepsilon}, & t\in[0,\varepsilon]
\\
\, 1, & t\in (\varepsilon, T],
\end{cases}
\end{align*}
and use \eqref{eq:weak_form} with the test function $\varphi=\eta_\varepsilon\phi_{\bar{h}}$. Taking the limit $\varepsilon \to 0$ and using Fubini's theorem we see that the elliptic term will converge to the integral of $[A(x,\cdot,u,\nabla u)]_h \cdot \nabla \phi$. Note now that
\begin{align*}
\iint_{\Omega_T} |u|^{\alpha -1} u \partial_t
(\eta_\varepsilon \phi_{\bar{h}})\d x \d t = \iint_{\Omega_T}|u|^{\alpha -1}u \eta_\varepsilon\frac{(\phi_{\bar{h}}-\phi)}{h}\d x \d t +\varepsilon^{-1} \int^\varepsilon_0\int_\Omega |u|^{\alpha -1}u \phi_{\bar{h}} \d x \d t.
\end{align*}
In the first term we can pass to the limit $\varepsilon\to 0$, use Fubini's theorem to move the mollification over to $u$ and apply Lemma \ref{expmolproperties} (ii) to obtain the integral of $\partial_t[|u|^{\alpha -1} u]_h \phi$. It remains to investigate what happens to the last term in the limit $\varepsilon\to 0$. Note that we can write this term as
\begin{align*}
\varepsilon^{-1} \int^\varepsilon_0\int_K |u|^{\alpha-1}u \phi_{\bar{h}} \d x \d t
&=\varepsilon^{-1} \int^\varepsilon_0 \int_K (|u|^{\alpha-1}u)(x,t) \phi_{\bar{h}}(0) \d x \d t 
\\
&\quad + \varepsilon^{-1} \int^\varepsilon_0 \int_K (|u|^{\alpha-1}u)(x,t)[ \phi_{\bar{h}}(t)- \phi_{\bar{h}}(0) ]\d x \d t.
\end{align*}
The second term on the right-hand side converges to zero since $\phi_{\bar{h}}$ is uniformly continuous and $\norm{|u|^{\alpha -1}u(t)}_{L^{\frac1\alpha+1}(K)}$ is bounded independently of $t$. The first term on the right-hand side converges to the second integral on the left-hand side of \eqref{h-averaged-form} since $|u|^{\alpha -1} u \in C([0,T]; L^{\frac1\alpha + 1}(K))$ and $\phi_{\bar{h}}(0)\in L^{\alpha + 1}(\Omega)$.
\end{proof}

\section{Energy Classes}\label{sec:energy}
In this section we prove some useful and very general energy estimates. In particular, the local boundedness and pointwise properties of solutions to \eqref{eq:weak_form} are encoded in the functional class $\A({p_i}, \Omega_T)$ consisting of integrable functions satisfying \eqref{est:energy}.


\begin{lem} \label{lem:general-formula}
Let $\Omega \subset \R^N$ open set, possibly unbounded, and let $T>0$. Let $f:\R \rightarrow \R$ be increasing, Lipschitz and piecewise $C^1$. Suppose that 
\begin{align} \label{Assunzione}
 f(s)= 0 \textnormal{ whenever } f'(s) = 0. 
\end{align} 
  Let $g:\R\to \R$ be the unique function for which
 \begin{align*}
  f(s)=g(|s|^{\alpha-1}s),
 \end{align*}
 and let $G:\R\to \R$ be any integral function of $g$. 
 Let $u$ be a weak solution of \eqref{def:weaksol} in $\Omega_T$. Then for each $\eta\in C^\infty_o(\Omega)$ of the form  
\begin{align}\label{expr:eta}
 \eta(x) := \prod^N_{i=1} \eta_s(x_i)^{p_i}, \quad \eta_i \in C^\infty_o(\R, [0,1]),\quad s \in \{1,\dots N\},
\end{align} 
and $\varphi \in C^{\infty}([0,T];[0,\infty))$, we have for all $0 \leq \tau_1\leq \tau_2\leq T$ the estimate
\begin{equation}\label{general-formula}
\begin{aligned}
\int_{\Omega}& \eta\varphi\, G\big( |u|^{\alpha-1}u(x, \tau_2)\big)\d x + \frac1\gamma \iint_{\Omega\times [\tau_1,\tau_2]} \sum_{i=1}^N |\partial_i u  |^{p_i} f'(u) \eta \varphi \, \d x \d t 
\\
&\leq \gamma \iint_{\Omega\times [\tau_1,\tau_2]} \chi_{\{\nabla u \neq \bar 0\}}\sum_{i=1}^N |f(u)|^{p_i} f'(u)^{1-p_i} |\partial_i \eta^{1/p_i}|^{p_i} \varphi\, \d x \d t 
\\
&\quad  + \int_{\Omega} \eta \varphi \, G\big( |u|^{\alpha-1}u(x, \tau_1)\big) \d x + \iint_{\Omega\times [\tau_1,\tau_2]}  \eta \varphi'  G\big( |u|^{\alpha-1}u \big) \d x \, \d t,
\end{aligned}
\end{equation}
where $\gamma$ is a sufficiently large constant depending only on $\Lambda, N, {\bf p}$. 
\end{lem}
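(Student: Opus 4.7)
The plan is to test the mollified weak formulation of Lemma \ref{lem:mollified} with $\phi = \eta\,\varphi\, f(u)\,\chi_\varepsilon$, where $\chi_\varepsilon$ is a smooth approximation of $\1_{[\tau_1,\tau_2]}$, then pass to the limits $h\to 0$ and $\varepsilon\to 0$. Since $f$ is Lipschitz with at most linear growth, $f(u)$ inherits both the spatial Sobolev regularity and the local integrability of $u$, so a standard spatial mollification/density argument justifies $\phi$ as an admissible test function in \eqref{h-averaged-form}.

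For the parabolic contribution, the formal chain rule reads $f(u)\,\partial_t(|u|^{\alpha-1}u) = \partial_t G(|u|^{\alpha-1}u)$, with $g(r) = f(|r|^{\frac{1}{\alpha}-1}r)$ and $G'=g$. To justify it rigorously I would first observe that $g$ is nondecreasing (as a composition of the increasing functions $f$ and $r\mapsto |r|^{\frac{1}{\alpha}-1}r$), then exploit Lemma \ref{expmolproperties}(ii) to rewrite $\partial_t[|u|^{\alpha-1}u]_h = h^{-1}(|u|^{\alpha-1}u - [|u|^{\alpha-1}u]_h)$ and use monotonicity of $g$ to control the difference between $\partial_t[|u|^{\alpha-1}u]_h\,f(u)$ and $\partial_t G([|u|^{\alpha-1}u]_h)$ in the limit $h\to 0$. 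Combined with the time continuity $|u|^{\alpha-1}u\in C([0,T];L^{\frac{1}{\alpha}+1}_{\textnormal{loc}}(\Omega))$ from Theorem \ref{cont_into_Lbetaplusone}, integration by parts produces the endpoint terms at $t=\tau_1,\tau_2$ and the error term with $\varphi'$ once $\chi_\varepsilon\to\1_{[\tau_1,\tau_2]}$.

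For the spatial contribution, the product rule gives $\nabla\phi = \varphi f(u)\,\nabla\eta\,\chi_\varepsilon + \eta\varphi f'(u)\,\nabla u\,\chi_\varepsilon$. Coercivity \eqref{cond:structure1} applied to the second term yields the positive energy $\Lambda^{-1}\eta\varphi f'(u)\sum_i |\partial_i u|^{p_i}$ appearing on the left of \eqref{general-formula}. The cutoff term is handled using the factorization $\partial_i\eta = p_i\,\eta^{(p_i-1)/p_i}\,\partial_i\eta^{1/p_i}$ permitted by the product form \eqref{expr:eta} of $\eta$: combining the growth bound \eqref{cond:structure2} with Young's inequality at exponents $p_i/(p_i-1)$ and $p_i$, after multiplying and dividing by $f'(u)^{(p_i-1)/p_i}$, gives on $\{f'(u)>0\}\cap\{\nabla u\neq \bar 0\}$
\[
|A_i\,\partial_i\eta\,f(u)| \leq \varepsilon\,\eta\,f'(u)\sum_k |\partial_k u|^{p_k} + C(\varepsilon,\Lambda,p_i)\,f'(u)^{1-p_i}|f(u)|^{p_i}\,|\partial_i\eta^{1/p_i}|^{p_i}.
\]
On $\{f'(u)=0\}$ the hypothesis \eqref{Assunzione} forces $f(u)=0$, so the estimate remains valid with both sides zero, and the singular power $f'(u)^{1-p_i}$ causes no ambiguity; the indicator $\chi_{\{\nabla u\neq\bar 0\}}$ in \eqref{general-formula} simply records that $A_i$ vanishes where $\nabla u=\bar 0$ by \eqref{cond:structure2}. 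Summing over $i$ and choosing $N\varepsilon<(2\Lambda)^{-1}$ absorbs the first term into the positive energy on the left, yielding \eqref{general-formula}.

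The main obstacle I expect is the rigorous chain rule for $t\mapsto G(|u|^{\alpha-1}u)$: since $f$ is only Lipschitz and $u$ has no classical time derivative, identifying the parabolic boundary contributions requires the interplay between the exponential mollification (which creates an actual time derivative to integrate by parts), the monotonicity of $g$, and the time continuity of $|u|^{\alpha-1}u$ supplied by Theorem \ref{cont_into_Lbetaplusone}. Everything else reduces to routine bookkeeping with the structure conditions, Young's inequality, and the hypothesis \eqref{Assunzione}.
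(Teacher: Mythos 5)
Your proposal follows essentially the same route as the paper's proof: test the mollified weak formulation of Lemma~\ref{lem:mollified} with $f(u)$ times a spatial cutoff $\eta$ and a temporal cutoff approximating $\chi_{[\tau_1,\tau_2]}$, use monotonicity of $g$ together with Lemma~\ref{expmolproperties}~(ii) to pass the time derivative to $G$, invoke the time continuity of Theorem~\ref{cont_into_Lbetaplusone} for the boundary terms, and handle the elliptic term via the structure conditions, Young's inequality, and hypothesis~\eqref{Assunzione}. The only detail you gloss over that the paper addresses explicitly is the treatment of the at-most-countable set where $f'$ is undefined (using that $\nabla u$, and hence $A$ by~\eqref{cond:structure2}, vanishes a.e.\ on $u^{-1}(S)$), but this is a minor technicality within the same argument.
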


\begin{proof}[Proof]
 Consider first $0<\tau_1 < \tau_2 < T$. Test the mollified weak formulation \eqref{h-averaged-form} with 
 \[\phi=f(u(x,t)) \eta(x) \xi(t),
 \] where $\eta$ is as in \eqref{expr:eta} and  $\xi(t)= \varphi(t)\psi(t)$, being $\varphi$ as above while $\psi$ is the trapezoidal function 
 \begin{align*}
\psi(t)=
\begin{cases}
0, & t<\tau_1 \\
\delta^{-1}(t-\tau_1), & t\in [\tau_1, \tau_1+\delta] \\
1, & t \in (\tau_1+\delta, \tau_2-\delta) \\
1 - \delta^{-1}(t-\tau_2+\delta), & t \in [\tau_2-\delta, \tau_2] \\
0, & t \geq \tau_2.
\end{cases}
\end{align*} Then, owing to \eqref{Assunzione}, the chain rule for weak derivatives shows that $\phi$ can be used in the mollified weak formulation which reads
\begin{equation}\label{molweak:special}
    \begin{aligned}
\iint_{\Omega_T} [A(x,\cdot,u, \nabla u)&]_h\cdot \nabla \phi + \partial_t [|u|^{\alpha -1}u]_h \phi \d x\d t\\
&- \int_{0}^{T} \int_{\Omega} |u_0|^{\alpha -1}u_0 f(u) \eta(x)\xi(t) \tfrac1h e^{-\frac{t}{h}} \d x \d t= 0. 
\end{aligned} \end{equation}

Next we want to perform some estimates in the diffusion term of \eqref{molweak:special} so that we can pass to the limit $h\to 0$. 
Noting that $f(u)= g(|u|^{\alpha-1}u)$ that $g$ is increasing and using (ii) in Lemma \ref{expmolproperties}, we have 
\begin{align}\label{est:diffterm}
 \partial_t [|u|^{\alpha -1}u]_h \phi & = \frac{(|u|^{\alpha-1}u - [|u|^{\alpha-1}u]_h)}{h}(g(|u|^{\alpha-1}u) - g([|u|^{\alpha-1}u]_h) )\eta \xi  
 \\
 \notag & \quad  + \eta \xi  g([|u|^{\alpha-1}u]_h) \partial_t [|u|^{\alpha -1}u]_h
 \\
 \notag & \geq \eta \xi \partial_t \big(G([|u|^{\alpha -1}u]_h)\big),
\end{align}
where in the last step we have also made use of a chain rule for Sobolev functions. The validity of the chain rule in this particular case can be verified by approximating $[|u|^{\alpha -1}u]_h$ using convolutions with standard mollifiers, integrating by parts and using the classical chain rule, and passing to the limit. Combining \eqref{molweak:special} and \eqref{est:diffterm} and moving the time derivative to the test function we end up with
\begin{align*}
 \iint_{\Omega_T} [A(x,\cdot,u, \nabla u)&]_h\cdot \nabla \phi - \eta \partial_t\xi  G([|u|^{\alpha -1}u]_h)  \d x\d t
 \\
&- \int_{0}^{T} \int_{\Omega} |u_0|^{\alpha -1}u_0 f(u) \eta(x)\xi(t) \tfrac1h e^{-\frac{t}{h}} \d x \d t \leq 0.
\end{align*}
We now want to pass to the limit $h\to 0$. The last integral will vanish in this limit due to the dominated convergence theorem. This can be seen by noting that $|u_0|^{\alpha -1}u_0 f(u)$ is locally integrable and the term $\xi(t) \tfrac1h e^{-\frac{t}{h}}$ remains bounded independently of $h$ since $\xi$ vanishes for times less than $\tau_1$. In the other terms passing to the limit $h\to 0$ poses no problem and we recover the corresponding terms without time mollification. Thus we have
\begin{align}\label{est:sidewinder}
 \iint_{\Omega_T} A(x,t,u, \nabla u) \cdot \nabla \phi - \eta \partial_t\xi  G(|u|^{\alpha -1}u)  \d x\d t \leq 0.
\end{align}
In order to estimate the integral of the elliptic term we first note that we can exclude the set of points where $f'\circ u$ is ill-defined. Namely, there is an at most countable set $S\subset \R$ where $f'$ is not defined. Since $\nabla u$ vanishes almost everywhere on any level set we see that $\nabla u$ vanishes almost everywhere on $u^{-1}S$. Due to \eqref{cond:structure2} we have that also $A(x,t,u,\nabla u)$ vanishes almost everywhere on $u^{-1}S$. Denoting $E= \Omega_T \setminus u^{-1}S$ we thus have
\begin{align}\label{eq:change-domain-of-integ}
 \iint_{\Omega_T} A(x,t,u, \nabla u) \cdot \nabla \phi \d x \d t = \iint_E A(x,t,u, \nabla u) \cdot \nabla \phi \d x \d t 
\end{align}
For any point in $E$ where $f'\circ u \neq 0$ we can use the structure conditions \eqref{cond:structure1} and \eqref{cond:structure2} and Young's inequality to make the estimates 
\begin{align*}
 A(&x,t,u, \nabla u) \cdot \nabla \phi = \xi \eta f'(u) A(x,t,u, \nabla u)\cdot \nabla u +\xi f(u) A(x,t,u, \nabla u) \cdot \nabla \eta
 \\
 &\geq \xi \Lambda^{-1}\sum^N_{j=1} |\partial_j u|^{p_j}f'(u) \eta - \Lambda \xi \sum^N_{k=1} \Big( \sum^N_{j=1} |\partial_j u|^{p_j}\Big)^\frac{p_k-1}{p_k} |f(u)| |\partial_k \eta|
 \\
 &\geq \xi \Lambda^{-1} \sum^N_{j=1} |\partial_j u|^{p_j}f'(u) \eta - c \xi \sum^N_{k=1} \Big( \sum^N_{j=1} |\partial_j u|^{p_j}\Big)^\frac{p_k-1}{p_k} |f(u)| \eta_k^{p_k-1}(x_k)|\eta_k'(x_k)| \prod_{s\neq k} \eta_s(x_s)^{p_s}
 \\
 &\geq (2\Lambda)^{-1}\xi \sum^N_{j=1} |\partial_j u|^{p_j}f'(u) \eta - c(\Lambda, N, {\bf p})\chi_{\{\nabla u \neq \bar 0\}} \xi \sum^N_{k=1} |f(u)|^{p_k} f'(u)^{1-p_k} |\partial_k \eta^\frac{1}{p_k}|^{p_k}.
\end{align*}
At the points of $E$ where $f'\circ u = 0$ we obtain the same estimate as above, however without the terms containing $f(u)$, due to \eqref{Assunzione}. Thus, choosing $\gamma= \max\{2\Lambda, c(\Lambda, N, {\bf p})\}$ we have at every point of $E$ that 
\begin{align*}
 \frac1\gamma \xi \sum^N_{j=1} |\partial_j u|^{p_j}f'(u) \eta \leq A(x,t,u, \nabla u) \cdot \nabla \phi + \gamma \chi_{\{\nabla u \neq \bar 0\}} \xi \sum^N_{k=1} |f(u)|^{p_k} f'(u)^{1-p_k} |\partial_k \eta^\frac{1}{p_k}|^{p_k},
\end{align*}
where the last term is interpreted as zero whenever $f\circ u = 0$. Note that at the points of $E$ where $f\circ u \neq 0$, the last term is well-defined due to \eqref{Assunzione}. The first two terms of the estimate are integrable, and the last term is nonnegative, so we can integrate over $E$ to obtain
\begin{align}\notag 
\frac1\gamma \iint_E \xi \sum^N_{j=1} |\partial_j u|^{p_j}f'(u) \eta \d x \d t &\leq \iint_E A(x,t,u, \nabla u) \cdot \nabla \phi \d x \d t 
\\
\label{est:elliptic-integrated} &\quad + \gamma  \iint_{E\cap \{f\circ u \neq 0\}} \chi_{\{\nabla u \neq \bar 0\}} \xi \sum^N_{k=1} |f(u)|^{p_k} f'(u)^{1-p_k} |\partial_k \eta^\frac{1}{p_k}|^{p_k} \d x \d t
\end{align}
where potentially the last integral could be infinite. In the first integral we can exchange $E$ for $\Omega_T$ with the understanding that $\nabla u$ vanishes a.e. on the set where $f'(u)$ is ill-defined so that the integrand can be interpreted as zero on this set. Similarly, for the last integral we can replace $E\cap \{f\circ u\neq 0\}$ by $\Omega_T$ since apart from a set of measure zero, the integrand is well-defined when $\nabla u \neq 0$ and $f\circ u \neq 0$.
With these modifications we combine \eqref{est:elliptic-integrated} with \eqref{eq:change-domain-of-integ} and \eqref{est:sidewinder} to obtain
\begin{align}\notag
 \frac1\gamma \iint_{\Omega_T} \xi \sum^N_{j=1} |\partial_j u|^{p_j}f'(u) \eta \d x \d t &\leq \gamma  \iint_{\Omega_T} \chi_{\{\nabla u \neq \bar 0\}} \xi \sum^N_{k=1} |f(u)|^{p_k} f'(u)^{1-p_k} |\partial_k \eta^\frac{1}{p_k}|^{p_k} \d x \d t
 \\ \label{est:before-delta-to-0}
 & \quad + \iint_{\Omega_T} \eta \partial_t\xi  G(|u|^{\alpha -1}u)  \d x\d t.
\end{align}
Note that the function $\xi=\varphi\psi$ converges pointwise from below to $\varphi \chi_{(\tau_1,\tau_2)}$ as $\delta\to 0$. Thus, the monotone convergence theorem allows us to pass to the limit $\delta \to 0$ in the first two integrals of \eqref{est:before-delta-to-0}, taking the limit inside the integrals obtaining the corresponding integrals in \eqref{general-formula}. In order to treat the last integral in \eqref{est:before-delta-to-0} we note that by the definition of $\xi$ we have
\begin{align}\notag
 \iint_{\Omega_T} \eta \partial_t\xi  G(|u|^{\alpha -1}u)  \d x\d t &=  \iint_{\Omega_T} \eta \varphi' \psi G(|u|^{\alpha -1}u)  \d x\d t + \frac{1}{\delta} \int^{\tau_1+\delta}_{\tau_1} \int_\Omega \eta \varphi G(|u|^{\alpha -1}u)  \d x\d t 
 \\ \label{eq:integral-expanded}
 & \quad - \frac{1}{\delta} \int^{\tau_2}_{\tau_2-\delta} \int_\Omega \eta \varphi G(|u|^{\alpha -1}u)  \d x\d t.
\end{align}
In the first integral we can take the limit $\delta \to 0$ inside the integral due to the dominated convergence theorem, and $\psi$ gets replaced by $\chi_{(\tau_1,\tau_2)}$ thus obtaining another term appearing in \eqref{general-formula}. The continuity property of $|u|^{\alpha-1}u$ established in Lemma \ref{lem:time-cont} can be used to prove that the map $t\mapsto \eta G(|u|^{\alpha-1}u)(\cdot, t)$ is continuous into $L^1(\Omega)$. From this it follows that 
\begin{align*}
 \frac{1}{\delta} \int^{\tau_1+\delta}_{\tau_1} \int_\Omega \eta \varphi G(|u|^{\alpha -1}u)  \d x\d t \xrightarrow[\delta \to 0]{} \int_\Omega \eta \varphi G(|u|^{\alpha -1}u)(x,\tau_1)  \d x.
\end{align*}
The last integral in \eqref{eq:integral-expanded} can be treated in a similar manner. Thus, passing to the limit $\delta \to 0$ in \eqref{est:before-delta-to-0} we recover \eqref{general-formula}. We proved the result in the case $0<\tau_1<\tau_2<T$. The result in the full range $0\leq \tau_1 < \tau_2 \leq T$ now follows from the continuity properties of $|u|^{\alpha-1}u$ and the appropriate convergence theorems by considering the limits $\tau_1 \to 0$ and $\tau_2 \to T$. 
\end{proof} 
When the function $f$ appearing in the previous lemma is chosen appropriately, we recover the classical energy estimates.
\begin{lem}\label{lem:Energy_Est}
Let $u$ be a weak solution in the sense of Definition \ref{def:weaksol}.  Let $k \in \R$, $\eta \in C_o^{\infty}(\Omega; [0,1])$ as in \eqref{expr:eta} and $\varphi \in C^\infty(\R; \R_{\geq 0})$ be a function vanishing near the origin. Then we have

\begin{align}\label{est:energy}
\sum^N_{j=1} &\iint_{\Omega_T} |\partial_j [(u-k)_\pm \eta]|^{p_j} \varphi \d x \d t + \sup_{\tau\in [0,T]}\int_{\Omega\times \{\tau\}} \big(u^\frac{\alpha+1}{2}-k^\frac{\alpha+1}{2})_\pm^2 \eta\varphi \d x
\\
\notag &\leq \sum^N_{j=1} C\iint_{\Omega_T} (u-k)_\pm^{p_j}|\partial_j \eta^\frac{1}{p_j}|^{p_j} \varphi\d x\d t + C\iint_{\Omega_T} \big(u^\frac{\alpha+1}{2}-k^\frac{\alpha+1}{2})_\pm^2 \eta (\partial_t \varphi)_+\d x \d t,
\end{align}
for a constant $C$ depending only on the parameters $\alpha, N, {\bf p}$ and the constant $\Lambda$ appearing in the structure conditions (\ref{cond:structure1}--\ref{cond:structure2}).
\end{lem}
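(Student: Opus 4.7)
The plan is to apply Lemma \ref{lem:general-formula} with the Lipschitz truncations $f(s):=(s-k)_+$ for the $+$-case and $f(s):=(s-k)\wedge 0$ for the $-$-case, and then massage the resulting integrals so that they match \eqref{est:energy}. Both choices of $f$ are non-decreasing, piecewise $C^1$ with derivative $\chi_{\{s>k\}}$ (respectively $\chi_{\{s<k\}}$), and satisfy the compatibility requirement \eqref{Assunzione}. Because $f'\equiv 1$ on its support, the formal factor $f'(u)^{1-p_i}$ equals $1$ wherever it is meaningful (it is interpreted as $0$ whenever $f(u)=0$, cf.\ the proof of Lemma \ref{lem:general-formula}). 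Consequently the diffusion term on the left of \eqref{general-formula} becomes $\gamma^{-1}\sum_i\iint|\partial_i(u-k)_+|^{p_i}\eta\varphi\,dx\,dt$, while the first right-hand term reduces to $\gamma\sum_i\iint(u-k)_+^{p_i}|\partial_i\eta^{1/p_i}|^{p_i}\varphi\,dx\,dt$, which already coincides with the gradient part of the right-hand side of \eqref{est:energy}.

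Next I would identify $G$. From $f(s)=g(|s|^{\alpha-1}s)$ we read off $g(\sigma)=(|\sigma|^{1/\alpha-1}\sigma-k)_+$; choosing the antiderivative that vanishes at $\sigma=|k|^{\alpha-1}k$ and performing the substitution $\tau=|s|^{\alpha-1}s$ yields
\[
G(|u|^{\alpha-1}u)\;=\;\alpha\int_k^u(s-k)|s|^{\alpha-1}\,ds \quad \text{for } u>k,
\]
and $G(|u|^{\alpha-1}u)=0$ for $u\leq k$. Differentiating in $u$ shows that on $\{u>k\}$ this coincides with $\b_\alpha[u,k]$, so Lemma \ref{estimates:boundary_terms} (i) gives $G(|u|^{\alpha-1}u)\asymp \bigl((u^{(\alpha+1)/2}-k^{(\alpha+1)/2})_+\bigr)^2$ with constants depending only on $\alpha$. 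Since $\varphi$ vanishes near the origin, choosing $\tau_1$ inside that vanishing neighborhood kills the boundary term at $\tau_1$. Letting $\tau_2=\tau$ vary over $[0,T]$, taking the supremum on the left, and bounding $\iint\eta\,\varphi' G$ from above by $\iint\eta(\partial_t\varphi)_+ G$ (valid since $G\geq 0$) produces both the $L^\infty_t$-term and the $(\partial_t\varphi)_+$-term of \eqref{est:energy}.

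The last step is to bring $\eta$ inside the $i$-th derivative. From the product rule and $(a+b)^{p_i}\leq 2^{p_i-1}(a^{p_i}+b^{p_i})$,
\[
|\partial_i[(u-k)_+\eta]|^{p_i}\leq 2^{p_i-1}\bigl(\eta^{p_i}|\partial_i(u-k)_+|^{p_i}+(u-k)_+^{p_i}|\partial_i\eta|^{p_i}\bigr).
\]
Because $0\leq \eta_j\leq 1$ and $p_i>1$, one has $\eta^{p_i}\leq\eta$ directly, and the product structure $\eta=\prod_j\eta_j^{p_j}$ together with the identity $\eta^{1/p_i}=\eta_i\prod_{j\neq i}\eta_j^{p_j/p_i}$ yields $|\partial_i\eta|^{p_i}\leq p_i^{p_i}|\partial_i\eta^{1/p_i}|^{p_i}$, since the ratio between these two quantities only involves nonnegative powers of factors in $[0,1]$. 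Summing in $i$ and absorbing the resulting $\eta|\partial_i(u-k)_+|^{p_i}$ term into the diffusion integral on the left of \eqref{general-formula} gives \eqref{est:energy} in the $+$-case; the $-$-case is entirely symmetric, $G$ then equaling $\b_\alpha[u,k]$ on $\{u<k\}$ and vanishing elsewhere. The main subtleties are the careful handling of the $f'(u)^{1-p_i}$ factor on the exceptional set $\{u=k\}$ and the identification of $G$ with $\b_\alpha[u,k]$ via the change of variable; once these are in place the argument is routine bookkeeping.
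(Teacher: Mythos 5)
Your proposal is correct and follows essentially the same route as the paper: apply Lemma \ref{lem:general-formula} with $f(s)=(s-k)_+$ (resp.\ $-(s-k)_-$), identify $G(|u|^{\alpha-1}u)$ with $\b_\alpha[u,k]\chi_{\{u>k\}}$, convert $\b_\alpha$ to $\big(u^{\frac{\alpha+1}{2}}-k^{\frac{\alpha+1}{2}}\big)_\pm^2$ via Lemma \ref{estimates:boundary_terms}\,(i), kill the $\tau_1$-boundary term using that $\varphi$ vanishes near the origin, and redistribute the cutoff with the product rule and the bound $|\partial_i\eta|^{p_i}\le p_i^{p_i}\,\eta^{p_i-1}|\partial_i\eta^{1/p_i}|^{p_i}$. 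The only (immaterial) deviations are that you fix $\tau_1$ inside the neighborhood where $\varphi$ vanishes instead of letting $\tau_1\downarrow 0$, and that you express $G$ as $\alpha\int_k^u(s-k)|s|^{\alpha-1}\,ds$ rather than verifying the closed-form identity directly.
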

\begin{proof}[Proof]
We prove the lemma in the case of the positive part and comment in the end how the proof can be modified to treat the negative part. We apply Lemma \ref{lem:general-formula} with the function
\begin{align*}
 f(s)=(s-k)_+,
\end{align*}
which clearly satisfies the assumptions of the lemma. With this choice of $f$ we have 
\begin{align*}
 f'(s)=\chi_{\{s>k\}}, \qquad g(s)=f(|s|^{\frac{1}{\alpha}-1}s)= (|s|^{\frac{1}{\alpha}-1}s-k)_+, 
\end{align*}
and we moreover choose $G$ as the integral function
\begin{align*}
G(\tau) := \int^\tau_{k^\alpha} g(s)\d s.
\end{align*}
A simple calculation verifies that 
\begin{align*}
 \qquad G(|u|^{\alpha-1}u)= \b_{\alpha}[u,k]\chi_{\{u>k\}}.
\end{align*}
Therefore, \eqref{general-formula} in this case takes the form 
\begin{equation}\label{ticcio}
\begin{aligned}
\int_{\Omega\times\{\tau_2\}}& \eta\varphi \b_\alpha[u,k]\chi_{\{u>k\}} \d x + \frac1\gamma \int_{\tau_1}^{\tau_2} \int_{\Omega} \sum_{i=1}^N |\partial_i u  |^{p_i} \chi_{\{u>k\}} \eta \varphi \, \d x \d t 
\\
&\leq \gamma \int_{\tau_1}^{\tau_2} \int_{\Omega} \sum_{i=1}^N (u-k)_+^{p_i} |\partial_i \eta^\frac{1}{p_i}|^{p_i} \varphi \d x \d t  + \int_{\Omega\times \{\tau_1 \}} \eta \varphi\b_\alpha[u,k]\chi_{\{u>k\}}\d x
\\
&\qquad \qquad + \int_{\tau_1}^{\tau_2} \int_{\Omega} \varphi' \b_\alpha[u,k]\chi_{\{u>k\}} \d x \, \d t.
\end{aligned}
\end{equation}

\noindent We let $\tau_1\downarrow 0$ and the second integral on the right vanishes because of the properties of $\varphi$, the integrability of $u$ and dominated convergence. Noting that $0\leq \eta_i \leq 1$ we estimate 
\begin{align*}
 |\partial_i[(u-k)_+\eta]|^{p_i} &\leq c|\partial_i(u-k)_+|^{p_i} \eta^{p_i} + c (u-k)_+^{p_i} |\partial_i \eta|^{p_i}
 \\
 &\leq c|\partial_i(u-k)_+|^{p_i}\eta + c (u-k)_+^{p_i}\eta_i(x_i)^{p_i(p_i-1)} |\eta_i'(x_i)|^{p_i}\prod_{s\neq i} \eta_s(x_s)^{p_s p_i}
 \\
 &\leq c|\partial_i u|^{p_i}\chi_{\{u>k\}}\eta + c(u-k)_+^{p_i}|\partial_i \eta^\frac{1}{p_i}|^{p_i}.
\end{align*}
Combining the last estimate with \eqref{ticcio} and noting that we may replace $\partial_t \varphi$ with its positive part we end up with
\begin{align*} 
 \sum^N_{i=1}&\iint_{\Omega_\tau} |\partial_i [(u-k)_+\eta]|^{p_i} \varphi \d x \d t + \int_{\Omega \times \{\tau\}} \b_\alpha[u,k]\chi_{\{ u>k \}}\eta \varphi \d x
\\
\notag &\leq c \sum^N_{i=1}\iint_{\Omega_\tau} (u-k)_+^{p_i} |\partial_i \eta^\frac{1}{p_i}|^{p_i} \varphi \d x \d t
+ c\iint_{\Omega_\tau} \b_\alpha[u,k]\chi_{\{ u>k \}}\eta (\partial_t\varphi)_+ \d x \d t,
\end{align*}
Note that we can replace $\b_\alpha$ on both sides with the appropriate quantity using Lemma \ref{estimates:boundary_terms} (i). We can also estimate the right-hand side upwards by replacing $\tau$ by $T$. Since both terms on the left-hand side are nonnegative, we can bound each term on the left-hand side individually by the right-hand side. Taking the supremum over $\tau \in [0,T]$ and adding these estimates we end up with \eqref{est:energy}. In the case of the negative part, one instead chooses $f(s) = -(s-k)_-$ and the rest of the proof is analogous. 
\end{proof}

\begin{defin}[De Giorgi classes $\DD(p_i,\alpha, \Omega_T)$] \label{def: DG1} We define $\DD(p_i,\alpha, \Omega_T)$ to be the set of measurable functions $u: \Omega_T \rightarrow \R$ such that 
\[u \in L^{\bf{p}}_{\textrm{loc}}(0,T; W^{1,\bf{p}}_{\textrm{loc}}(\Omega))\cap L^{P}_{\textrm{loc}}(\Omega_T), 
\]
for $m$ as in \eqref{extra_integrability}, which satisfies the inequalities \eqref{general-formula} for each such $f,g:\R \rightarrow \R$.
\end{defin}

\begin{defin}[De Giorgi classes $\A(p_i,\alpha, \Omega_T)$] \label{def: DG2} We define $\A(p_i,\alpha,\Omega_T)$ to be the set of measurable functions $u: \Omega_T \rightarrow \R$ such that 
\[u \in L^{\bf{p}}_{\textrm{loc}}(0,T; W^{1,\bf{p}}_{\textrm{loc}}(\Omega))\cap L^{P}_{\textrm{loc}}(\Omega_T), \quad \quad  
\]
for $m$ as in \eqref{extra_integrability}, which satisfies the inequalities \eqref{est:energy} for each such $\varphi$ and $k \in \R_+$.
\end{defin}
\noindent Starting from this definition, we prove in the following sections some fine properties of functions in the aforementioned classes. Local boundedness, semicontinuity for functions in $\A(p_i,\alpha, \Omega_T)$ and global boundedness, properties of the mass and compact support for integrable weak solutions of the Cauchy problem, being in $\DD(p_i,\alpha, S_T)$. In particular Lemma \ref{lem:general-formula} shows that weak solutions to \eqref{eq:diffusion}-\eqref{cond:structure1}-\eqref{cond:structure2} in the sense of Definition \ref{def:weaksol} are elements of $\DD(p_i,\alpha,\Omega_T)$, and Lemma \ref{lem:elementary_real} shows the inclusion
 \[ \DD(p_i,\alpha, \Omega_T) \subseteq \A(p_i,\alpha, \Omega_T).\]

\section{Local Boundedness}
\label{sec: localboundedness}

In this section we prove the local boundedness of weak solutions. Our proofs are based on De Giorgi type iterations combining the energy estimates obtained in Lemma \ref{lem:Energy_Est} with the Sobolev embedding of Lemma \ref{rem:M_Troisi}.
Throughout the section we will use the space-time cylinders defined in Section \ref{sec: notation}, which turn out to be convenient in our setting.

As mentioned in the introduction, there are two ranges for $\bar p$ which require somewhat different proofs, and in the range corresponding to small values of $\bar p$ we also require some extra integrability of the solution. For clarity the two cases have been presented in separate subsections.

\subsection{The case $\bar p > \frac{N(\alpha +1)}{N+ \alpha +1}$}
In this section we focus on the case in which $\bar p$ satisfies the following lower bound:
\begin{align}\label{bar-p_lower_bnd}
 \bar p > \frac{N(\alpha + 1)}{N + \alpha +1}.
\end{align}
The proof of local boundedness in Theorem \ref{theo:local_bdd_large_bar_p} below is somewhat different in the cases $p_N\geq \alpha + 1$ and $p_N< \alpha + 1$, where $p_N$ indicates the largest of the exponents $p_i$. In fact, in the former case the condition \eqref{bar-p_lower_bnd} is not explicitly used in the argument, however  in this particular case \eqref{bar-p_lower_bnd} must necessarily be true due to the lower bound for the parameters $p_i$.

\begin{theo}\label{theo:local_bdd_large_bar_p}
Let $u\in \A(p_i,\alpha, \Omega_T)$ and suppose that \eqref{bar-p_lower_bnd} holds. Then $u$ is locally bounded and for any cylinder of the form $Q_{r}(x_o,t_o)$ compactly contained in $\Omega_T$ and every $\sigma \in (0,1)$ we have the explicit bound
\begin{align}\label{est:local_bddness}
 \esssup_{Q_{\sigma r}(x_o,t_o)} u \leq \max \Big\{1, c \Big((1-\sigma)^{-\frac{p_N}{\bar p}(N+\bar p)}\dashiint_{Q_{r}(x_o,t_o)} u_+^{P}\d x \d t \Big)^\frac{\bar p}{N(\bar p(1+\frac{\alpha+1}{N})- P)} \Big\},
\end{align}
where $P = \max\{\{p_s\}^N_{s=1}\cup \{\alpha +1\}\}$ and $c$ is a constant depending only on $\Lambda, N, {\bf p}, \alpha$. An analogous lower bound in terms of the $L^{P}$-norm of $u_-$ holds for the essential infimum.
\end{theo}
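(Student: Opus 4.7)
The strategy is a De Giorgi-type iteration coupling the energy estimate of Lemma~\ref{lem:Energy_Est} with the parabolic anisotropic embedding of Theorem~\ref{PAS}. Fix the reference cylinder $Q_r\equiv Q_r(x_o,t_o)$ and $\sigma\in(0,1)$. For a level $k\geq 1$ to be chosen at the end, and for $n\in\N$, set
\begin{equation*}
\rho_n := \sigma r + (1-\sigma)r\,2^{-n},\qquad k_n := k(1-2^{-n}),\qquad Q_n := Q_{\rho_n}(x_o,t_o).
\end{equation*}
I will use product-type spatial cutoffs $\eta_n\in C^\infty_o(K_{\rho_n})$ of the form \eqref{expr:eta} with $\eta_n\equiv 1$ on $K_{\rho_{n+1}}$ and satisfying $|\partial_i \eta_n^{1/p_i}|\leq C\,2^{n/p_i}/((1-\sigma)r^{1/p_i})$, together with matching smooth time cutoffs $\varphi_n$.

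First I apply \eqref{est:energy} with $k=k_{n+1}$, $\eta=\eta_n$, $\varphi=\varphi_n$. Using the elementary comparison $(u^{(\alpha+1)/2}-k^{(\alpha+1)/2})_+^{2}\leq (u-k)_+^{\alpha+1}$, valid for $\alpha\in(0,1)$, together with the truncation $(u-k_{n+1})_+^{p_j}+(u-k_{n+1})_+^{\alpha+1}\leq 2\max\{1,(u-k_{n+1})_+\}^{P}$ made possible by the assumption $k\geq 1$, the right-hand side is dominated by
\begin{equation*}
C\,\frac{2^{n\,p_N}}{\bigl((1-\sigma)r\bigr)^{\bar p}}\iint_{Q_n}\max\{1,(u-k_{n+1})_+\}^{P}\,\d x\,\d t.
\end{equation*}
This yields simultaneous control of the spatial gradient energies of $v_n:=(u-k_{n+1})_+\eta_n$ and of $\sup_\tau\int v_n^{\alpha+1}\,\d x$.

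Next I apply the parabolic Sobolev embedding of Theorem~\ref{PAS} to $v_n$ with $\alpha_i=1$, supremum-exponent $\sigma=\alpha+1$, and $\theta=\bar p/\bar p^{*}$. This produces the interpolation exponent
\begin{equation*}
q=\bar p\Bigl(1+\frac{\alpha+1}{N}\Bigr),
\end{equation*}
which, by the parameter assumption \eqref{parameter-range}, strictly exceeds $P$; the excess $\delta:=(q-P)/P>0$ is the gain of integrability that drives the iteration. Introducing the dimensionless quantity $Y_n:=\dashiint_{Q_n}\max\{1,(u-k_n)_+\}^{P}\,\d x\,\d t$, the level-jump inequality $|\{u>k_{n+1}\}\cap Q_n|\leq (2^{n+1}/k)^{P}|Q_n|\,Y_n$ together with H\"older's inequality converts the $L^q$ bound back to a bound on the $P$-th power and leads, after combining with the energy estimate, to a recursion
\begin{equation*}
Y_{n+1}\leq \frac{C\,b^{n}}{(1-\sigma)^{\beta}}\,k^{-P\delta}\,Y_n^{1+\delta},
\end{equation*}
for explicit $b>1$ and $\beta>0$ depending only on $N,\mathbf p,\alpha,\Lambda$.

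Finally, Lemma~\ref{lem:fastconvg} ensures $Y_n\to 0$ as soon as $Y_0$ falls below the threshold $C^{-1/\delta}b^{-1/\delta^{2}}$, from which $u\leq k$ a.e.\ on $Q_{\sigma r}$ follows. Solving this smallness condition for the minimal admissible $k$ yields the quantitative bound \eqref{est:local_bddness}, the $\max\{1,\cdot\}$ being the memory of the truncation at level $1$ used to merge the $p_j$- and $(\alpha+1)$-terms of the energy estimate into a single $P$-power. The main obstacle is the careful Sobolev bookkeeping needed to extract the precise exponent $\bar p/[N(\bar p(1+(\alpha+1)/N)-P)]$ and the sharp $(1-\sigma)^{-p_N(N+\bar p)/\bar p}$ prefactor; a secondary subtlety is to handle uniformly the two sub-regimes $p_N\gtreqless\alpha+1$, since in the first the spatial-gradient term dominates the energy inequality whereas in the second the time-slice term does, so the interpolation $\theta$ must be selected to match whichever side drives the iteration.
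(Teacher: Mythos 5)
Your overall strategy---a De Giorgi iteration combining the energy estimate \eqref{est:energy} with the parabolic anisotropic Sobolev embedding (whether invoked via Theorem~\ref{PAS} or unfolded by hand, as the paper does in \eqref{lcalc}), closed by Lemma~\ref{lem:fastconvg}---matches the paper's, and you correctly identify the interpolation exponent $q=\bar p\bigl(1+\tfrac{\alpha+1}{N}\bigr)>P$ as the source of the gain. The fatal flaw is in your iteration quantity. With $Y_n:=\dashiint_{Q_n}\max\{1,(u-k_n)_+\}^{P}\,\d x\,\d t$, the integrand is everywhere $\geq 1$, hence $Y_n\geq 1$ for every $n$; it can never converge to zero, so Lemma~\ref{lem:fastconvg} cannot deliver the conclusion $u\leq k$ on $Q_{\sigma r}$ as you claim.

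The problem originates in how you merge the $p_j$- and $(\alpha+1)$-terms. The inequality $(u-k_{n+1})_+^{p_j}+(u^{(\alpha+1)/2}-k_{n+1}^{(\alpha+1)/2})_+^{2}\leq c\,\max\{1,(u-k_{n+1})_+\}^{P}$ is true, but it throws away the fact that the left side vanishes on $\{u\leq k_{n+1}\}$. The correct merge, as in the paper's \eqref{est:u-k_j-diff-to_p_s}, retains the characteristic function: $(u-k_{j+1})_+^{p_s}\leq c\,(u^{(\alpha+1)/2}-k_j^{(\alpha+1)/2})_+^{\theta}+c\,(1+k_j^{p_s})\,\chi_{\{u>k_{j+1}\}}$, with $\theta=\tfrac{2}{\alpha+1}\max\{p_N,\alpha+1\}$. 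The indicator term is then absorbed via the level-jump estimate \eqref{est:super_level_set}, and the paper iterates on $Y_j=\iint_{Q_j}(u^{(\alpha+1)/2}-k_j^{(\alpha+1)/2})_+^{\theta}\,\d x\,\d t$ with levels $k_j=k(1-2^{-j})^{2/(\alpha+1)}$, a quantity that genuinely shrinks to zero, while the single choice of $\theta$ handles both sub-regimes $p_N\gtreqless\alpha+1$. Restricting your integrand to $\{u>k_n\}$, or passing to the paper's quantity, would repair the scheme.
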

\begin{proof}[Proof]
Define numbers
\begin{align*}
r_j := r(\sigma + (1-\sigma)2^{-j}),
\hspace{15mm} k_j:=k(1-2^{-j})^\frac{2}{\alpha+1},
\end{align*}
where $k\geq 1$ is a number to be fixed later. We also define the cylinders 
\begin{align*}
Q_j := Q_{r_j}(x_o,t_o) = K_j \times T_j.
\end{align*}
Here, $K_j$ denotes the rectangular region in space, and $T_j$ denotes the time interval. Choose functions $\eta^s_j \in C^\infty_o(x_o^s +(-r^\frac{1}{p_s}_{j}, r^\frac{1}{p_s}_{j}); [0,1])$ such that $\eta^s_j = 1$ on $x_o^s +(-r^\frac{1}{p_s}_{j+1},r^\frac{1}{p_s}_{j+1})$ and
\begin{align*}
 |{\eta^s_j}'|^{p_s} \leq c r^{-1} (1-\sigma)^{-p_N}2^{p_Nj}.
\end{align*}
Furthermore, we define
\begin{align*}
 \eta_j(x):= \prod^N_{s=1} \eta^s_j(x_s)^{p_s}.
\end{align*}
Similarly we take $\psi_j \in C^\infty(\R;[0,1])$ such that $\psi_j=1$ on $(t_o-\tau_{j+1}, t_o]$ and $\psi_j(t) = 0$ for $t \leq t_o -\frac12(\tau_j + \tau_{j+1})$ and
\begin{align*}
 |\psi_j'| \leq c r^{-1} (1-\sigma)^{-1}2^j.
\end{align*}
Furthermore, we set
\begin{align*}
 \theta &:= \frac{2}{\alpha + 1}\max \{ p_N, \alpha +1\} = 
 \begin{cases}
\frac{2 p_N}{\alpha +1}, & p_N\geq \alpha + 1 \\
2, & p_N< \alpha +1 
\end{cases}
\\
 \gamma &:= \frac{2 \bar p}{\theta}\Big(\frac1N + \frac{1}{\alpha+1}\Big).
\end{align*}
In the case $p_N\geq \alpha + 1$ the upper bound of the exponents $p_i$ in \eqref{parameter-range} guarantees that $\gamma > 1$. In the case $p_N< \alpha +1$ we can use the lower bound \eqref{bar-p_lower_bnd} for $\bar p$ to deduce that $\gamma > 1$. We define the sequence
\begin{align*}
Y_j:=\iint_{Q_j}\big(u^\frac{\alpha+1}{2}-k_j^\frac{\alpha+1}{2}\big)_+^\theta \d x \d t,
\end{align*}
for $j \in \N_0$. Note that $Y_j$ is finite for every $j$ since $u\in L^P_\textrm{loc}(\Omega_T)$. We may use H\"older's inequality to estimate
\begin{align}\label{Y-jplusone_estim1}
Y_{j+1}\leq \Big[\iint_{Q_{j+1}} \big(u^\frac{\alpha+1}{2}-k_{j+1}^\frac{\alpha+1}{2}\big)_+^{\theta \gamma}\d x \d t\Big]^\frac{1}{\gamma}|Q_{j+1}\cap \{u>k_{j+1}\}|^{1 - \frac{1}{\gamma}}.
\end{align}
We will use the shorthand notation
\begin{align}\label{def-est:phi}
\phi:= \big(u^\frac{\alpha+1}{2}-k_{j+1}^\frac{\alpha+1}{2}\big)_+ \leq (u-k_{j+1})_+^\frac{\alpha + 1}{2},
\end{align}
where the last estimate follows from Lemma \ref{lem:elementary_real} and the fact that $2/(\alpha+1)>1$. In the following calculation we estimate the integral in \eqref{Y-jplusone_estim1} by using the relation between $\gamma$ and $\theta$, and apply H\"older's inequality with $Q=N/(N-\bar p)$ in the integral over space. We estimate one of the integrals in space by its supremum for all times in $T_{j+1}$, use the estimate in \eqref{def-est:phi} in the other integral and introduce the functions $\eta_j$ and $\psi_j$ while expanding the set of integration, which allows us to use Remark \ref{rem:M_Troisi}. Finally, we estimate both of the resulting factors using the energy estimate  \eqref{est:energy} and use the bounds for the functions $\eta_j$, $\psi_j$ and their derivatives. All in all, the calculation takes the form
\begin{align}
 \notag \iint_{Q_{j+1}} &\phi^{2\bar p (\frac1N + \frac{1}{\alpha+1})} \d x \d t \leq \int_{T_{j+1}} \Big[ \int_{K_{j+1}} \phi^{\frac{2\bar p}{N}Q'}\d x\Big]^\frac{1}{Q'}\Big[ \int_{K_{j+1}} \phi^{\frac{2\bar p}{\alpha+1}Q}\d x\Big]^\frac{1}{Q}\d t 
 \\
 \label{lcalc} &= \int_{T_{j+1}} \Big[ \int_{K_{j+1}} \phi^{2}\d x\Big]^\frac{\bar p}{N}\Big[ \int_{K_{j+1}} \phi^{\frac{2}{\alpha+1}\bar p^*}\d x\Big]^\frac{\bar p}{\bar p^*}\d t 
 \\
 \notag &\leq \Big[ \sup_{T_{j+1}}\int_{K_{j+1}} \big(u^\frac{\alpha+1}{2}-k_{j+1}^\frac{\alpha+1}{2}\big)_+^{2}\d x\Big]^\frac{\bar p}{N} \int_{T_{j+1}} \Big[ \int_{K_{j+1}} (u-k_{j+1})_+^{\bar p^*}\d x\Big]^\frac{\bar p}{\bar p^*}\d t 
 \\
 \notag &\leq \Big[ \sup_{T_{j}}\int_{K_{j}} \big(u^\frac{\alpha+1}{2}-k_{j+1}^\frac{\alpha+1}{2}\big)_+^{2} \eta_j \psi_j \d x\Big]^\frac{\bar p}{N}\int_{T_j}\psi_j \Big[ \int_{K_{j}} [(u-k_{j+1})_+\eta_j]^{\bar p^*}\d x\Big]^\frac{\bar p}{\bar p^*}\d t
 \\
 \notag & \leq c \Big[ \sup_{T_{j}}\int_{K_{j}} \big(u^\frac{\alpha+1}{2}-k_{j+1}^\frac{\alpha+1}{2}\big)_+^{2} \eta_j \psi_j \d x\Big]^\frac{\bar p}{N} \Big[ \sum^N_{s=1} \iint_{Q_j} |\partial_s[(u-k_{j+1})\eta_j]|^{p_s} \psi_j\d x \d t\Big]
 \\
 \notag &\leq c\Big[ \iint_{Q_j} \sum^N_{s=1} (u-k_{j+1})_+^{p_s}|\partial_s \eta_j^\frac{1}{p_s}|^{p_s} \psi_j + \big(u^\frac{\alpha+1}{2}-k_{j+1}^\frac{\alpha+1}{2})_+^2 \eta_j (\partial_t \psi_j)_+\d x \d t\Big]^\frac{N+ \bar p}{N}
 \\
 \notag &\leq c \frac{2^{j\frac{p_N(N+\bar p)}{N}}}{ r^{\frac{N+\bar p}{N}}  (1-\sigma)^{\frac{p_N(N+\bar p)}{N}} } \Big[  \iint_{Q_j} \sum^N_{s=1} (u-k_{j+1})_+^{p_s} + \big(u^\frac{\alpha+1}{2}-k_{j+1}^\frac{\alpha+1}{2})_+^2 \d x \d t\Big]^\frac{N+ \bar p}{N}.
\end{align}
To treat the terms in the sum in the last integral we note that 
\begin{align}\label{est:u-k_j-diff-to_p_s}
 (u-k_{j+1})_+^{p_s}\leq u^{p_s}\chi_{\{u>k_{j+1}\}} &=(u^\frac{\alpha+1}{2} - k_j^\frac{\alpha+1}{2} + k_j^\frac{\alpha+1}{2})^{\frac{2 p_s}{\alpha+1}}\chi_{\{u>k_{j+1}\}} 
 \\
\notag &\leq c(u^\frac{\alpha+1}{2} - k_j^\frac{\alpha+1}{2})_+^{\frac{2 p_s}{\alpha+1}}\chi_{\{u>k_{j+1}\}} + c k_j^{p_s}\chi_{\{u>k_{j+1}\}} 
 \\
\notag &\leq c(u^\frac{\alpha+1}{2} - k_j^\frac{\alpha+1}{2})_+^\theta + c(1 + k_j^{p_s})\chi_{\{u>k_{j+1}\}},
\end{align}
where in the last step we use Young's inequality and the definition of $\theta$. Similarly, since $\theta \geq 2$ the last term in the integral on the last row of \eqref{lcalc} can also be treated using Young's inequality:
\begin{align*}
 \big(u^\frac{\alpha+1}{2}-k_{j+1}^\frac{\alpha+1}{2})_+^2 \leq \big(u^\frac{\alpha+1}{2}-k_{j}^\frac{\alpha+1}{2})_+^\theta + \chi_{\{u>k_{j+1}\}}.
\end{align*}
We have also used the fact that $k_j < k_{j+1}$. Applying these estimates to the terms in the last integral in \eqref{lcalc} and recalling that $k\geq 1$ we obtain
\begin{align}\label{est:phistuff}
 \iint_{Q_{j+1}} \phi^{2\bar p (\frac1N + \frac{1}{\alpha+1})}\d x \d t \leq c r^{-\frac{N+\bar p}{N}} (1-\sigma)^{-\frac{p_N(N+\bar p)}{N}} 2^{j\frac{p_N(N+\bar p)}{N}} \big( Y_j + k^{P}|Q_j\cap\{u>k_{j+1}\}| \big)^\frac{N+ \bar p}{N}.
\end{align}
We now note that 
\begin{align}\label{est:super_level_set}
 |Q_j\cap\{u>k_{j+1}\}|\leq \iint_{Q_j\cap \{u >k_{j+1}\}} \frac{\big(u^\frac{\alpha+1}{2}-k_j^\frac{\alpha+1}{2}\big)^\theta}{\big(k_{j+1}^\frac{\alpha+1}{2} - k_j^\frac{\alpha+1}{2}\big)^\theta} \d x \leq k^{-P} 2^{\theta(j+1)} Y_j.
\end{align}
Combining this estimate with \eqref{est:phistuff} and \eqref{Y-jplusone_estim1} we end up with 
\begin{align}\label{est:final-Y_jplusone}
 Y_{j+1} \leq c r^{-\frac{(N+\bar p)}{\gamma N}} (1-\sigma)^{-\frac{p_N(N+\bar p)}{\gamma N}} k^{-P(1-\frac1\gamma)} b^j Y_j^{1+\frac{\bar p}{N \gamma}},
\end{align}
where $c$ and $b$ only depend on the parameters.  We are thus in a position to apply the result in Lemma \ref{lem:fastconvg} with the choices
\begin{align*}
 C = c r^{-\frac{(N+\bar p)}{\gamma N}} (1-\sigma)^{-\frac{p_N(N+\bar p)}{\gamma N}} k^{-P(1-\frac1\gamma)}, \hspace{7mm} \delta = \frac{\bar p}{N \gamma},
\end{align*}
where $c$ is the constant from \eqref{est:final-Y_jplusone}, provided that 
\begin{align}\label{cond:Y_0}
 Y_0 \leq C^{-\frac1\delta} b^{-\frac{1}{\delta^2}} = c r^{\frac{(N+\bar p)}{\bar p}} (1-\sigma)^\frac{p_N(N+\bar p)}{\bar p} k^{\frac{N P}{\bar p}(\gamma -1)}.
\end{align}
Since 
\begin{align*}
 Y_0 \leq \iint_{Q_{r}(x_o,t_o)} u_+^{P}\d x \d t,
\end{align*}
and since $|Q_{r}(x_o,t_o)| \sim r^{\frac{(N+\bar p)}{\bar p}}$ we see that \eqref{cond:Y_0} is satisfied provided that 
\begin{align}\label{k-bound}
 k \geq c\Big((1-\sigma)^{-\frac{p_N(N+\bar p)}{\bar p}} \dashiint_{Q_{r}(x_o,t_o)} u_+^{P}\d x \d t \Big)^\frac{\bar p}{N(\bar p(1+\frac{\alpha+1}{N})- P)}.
\end{align}
for a constant $c$ depending only on the parameters. The conclusion of Lemma \ref{lem:fastconvg} is that
\begin{align*}
 \iint_{Q_{\sigma r}(x_o,t_o)}\big(u^\frac{\alpha+1}{2}-k^\frac{\alpha+1}{2}\big)_+^\theta \d x \d t\leq Y_j \to 0,
\end{align*}
which shows that the integrand above is zero almost everywhere, and hence
\begin{align*}
 \esssup_{Q_{\sigma r}(x_o,t_o)} u \leq k.
\end{align*}
Recalling that the only bounds on $k$ used in the argument are \eqref{k-bound} and $k\geq 1$, we have verified \eqref{est:local_bddness}. An analogous argument gives rise to a lower bound in terms of the $L^{P}$-norm of $u_-$.
\end{proof}

\noindent By iterating the previous result we can actually get a better bound for the essential supremum. In order to proceed we first note that by the upper bound for the exponents $p_i$ in \eqref{parameter-range} and the inequality \eqref{bar-p_lower_bnd} we have
\begin{align*}
 P > P - \frac{N}{\bar p}\Big(\bar p(1 + \frac{\alpha +1}{N}) - P\Big) =: \lambda(\alpha, N, {\bf p}). 
\end{align*}
Moreover, by the definitions of $\lambda(\alpha, N, {\bf p})$ and $P$ we see that
\begin{align*}
 \lambda(\alpha, N, {\bf p}) = P - (\alpha + 1) + N \Big( \frac{P}{\bar p} - 1\Big) \geq 0.
\end{align*}
Now we can formulate the following.
\begin{theo}\label{thm:improved_local_bddness}
 Let $u\in \A(p_i,\alpha,\Omega_T)$ and suppose that \eqref{bar-p_lower_bnd} holds. Then for any $Q_{2r}(x_o,t_o)$ compactly contained in $\Omega_T$ and any $\gamma \in (\lambda(\alpha, N, {\bf p}), P]$ we have
 \begin{align}\label{est:improved_local_bddness}
  \esssup_{Q_r(x_o,t_o)} u \leq c\Big( \Big(\dashiint_{Q_{2r}(x_o,t_o)} u_+^\gamma \d x \d t\Big)^\frac{1}{\gamma - \lambda(\alpha, N, {\bf p})} + 1\Big),
 \end{align}
 for a constant $c$ depending only on $\alpha, N, {\bf p}, \Lambda, \gamma$.
\end{theo}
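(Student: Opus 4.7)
The plan is to bootstrap Theorem \ref{theo:local_bdd_large_bar_p} from the $L^P$-norm down to any $L^\gamma$-norm with $\gamma \in (\lambda, P]$, where I write $\lambda := \lambda(\alpha, N, {\bf p})$, by the classical interpolation-and-absorption argument. A direct algebraic manipulation yields the identity
\[
P - \lambda = \tfrac{N}{\bar p}\bigl(\bar p(1 + (\alpha+1)/N) - P\bigr),
\]
so that the exponent appearing in \eqref{est:local_bddness} is precisely $1/(P-\lambda)$ and the conclusion of Theorem \ref{theo:local_bdd_large_bar_p} can be rewritten as
\[
\esssup_{Q_{\sigma \rho}(x_o, t_o)} u_+ \leq 1 + c(1-\sigma)^{-\beta}\Big(\dashiint_{Q_{\rho}(x_o, t_o)} u_+^{P}\d x \d t\Big)^{\frac{1}{P-\lambda}},
\]
valid for every $\sigma \in (0,1)$ and every $Q_\rho(x_o, t_o) \Subset \Omega_T$, with $\beta = \tfrac{p_N(N+\bar p)}{\bar p(P-\lambda)}$.

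Next, I would introduce the auxiliary quantity $M(s) := \esssup_{Q_{sr}(x_o, t_o)} u_+$ for $s$ in a fixed interval $[1, s_\star] \subset [1,2)$; this is finite by one application of the rewritten estimate to $Q_{2r}$ with contraction factor $s_\star/2$. For $1 \leq t < s \leq s_\star$, applying the estimate to the nested pair $Q_{tr} \subset Q_{sr}$ and interpolating $u_+^P \leq M(s)^{P-\gamma} u_+^\gamma$ on $Q_{sr} \subset Q_{2r}$, while using that the volumes $|Q_{sr}|$ and $|Q_{2r}|$ are comparable for $s$ in this range, produces an inequality of the form
\[
M(t) \leq 1 + c(s-t)^{-\beta} M(s)^{\frac{P-\gamma}{P-\lambda}}\Big(\dashiint_{Q_{2r}(x_o, t_o)} u_+^{\gamma}\d x \d t\Big)^{\frac{1}{P-\lambda}}.
\]
Since $\gamma > \lambda$ by hypothesis, the ratio $(P-\gamma)/(P-\lambda)$ is strictly less than one, so Young's inequality with conjugate exponents $\tfrac{P-\lambda}{P-\gamma}$ and $\tfrac{P-\lambda}{\gamma - \lambda}$ splits the right-hand side into $\tfrac12 M(s)$ plus a term depending only on the $L^\gamma$-average, with a new factor $(s-t)^{-\tilde\beta}$ where $\tilde\beta = \beta(P-\lambda)/(\gamma - \lambda)$.

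At this point the standard interval iteration lemma for nonnegative bounded functions (see, e.g., \cite{Gi}) applied to $M$ on $[1, s_\star]$ yields
\[
M(1) \leq c + c\Big(\dashiint_{Q_{2r}(x_o, t_o)} u_+^{\gamma}\d x \d t\Big)^{\frac{1}{\gamma - \lambda}},
\]
which is exactly \eqref{est:improved_local_bddness}. The corresponding lower bound follows by repeating the argument with $u_-$ in place of $u_+$.

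The one genuinely delicate point is the Young-inequality absorption, which strictly requires $\gamma > \lambda$: at the borderline value the two Young-exponents degenerate and the iteration breaks down, reflecting the fact that $\lambda$ is the scale-invariant threshold beyond which a quantitative sup-estimate in terms of an $L^\gamma$-average is possible. Everything else, including the use of the interval $[1, s_\star]$ rather than $[1,2]$ to secure a priori finiteness of $M$, is routine bookkeeping.
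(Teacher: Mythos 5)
Your proposal is correct and follows essentially the same route as the paper: both rewrite the exponent in \eqref{est:local_bddness} as $1/(P-\lambda(\alpha,N,{\bf p}))$, interpolate $u_+^P \leq M^{P-\gamma}u_+^{\gamma}$ on the larger cylinder, and absorb the resulting sublinear power of the supremum using $\gamma > \lambda(\alpha,N,{\bf p})$. The only difference is in the absorption device: you invoke the continuous-parameter interval iteration lemma on a range of radii, whereas the paper runs the discrete dyadic scheme $M_j \leq \varepsilon M_{j+1} + c(\varepsilon)b^j(\cdots) + 1$ on $r_j=(2-2^{-j})r$ and sums the geometric series after choosing $\varepsilon b<1$; these are interchangeable.
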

\begin{proof}[Proof] Let $\gamma$ be as in the statement of the lemma. Note that the case $\gamma = P$ is contained in the previous theorem, so henceforth we may assume $\gamma < P$. We define
 \begin{align*}
  r_j := (2-2^{-j})r, \hspace{7mm} Q_j:= Q_{r_j}(x_o,t_o).
 \end{align*}
Using \eqref{est:local_bddness} with $\sigma = r_j/r_{j+1}$ we obtain
\begin{align*}
 M_j := \esssup_{Q_j} &\leq c\Big( 2^{j \frac{p_N}{\bar p}(N + \bar p)} \dashiint_{Q_{j+1}} u_+^P\d x \d t\Big)^\frac{\bar p}{N(\bar p(1+\frac{\alpha+1}{N})- P)} + 1
 \\
 &\leq c M_{j+1}^\frac{\bar p (P-\gamma)}{N(\bar p(1+\frac{\alpha+1}{N})- P)} \Big( 2^{j \frac{p_N}{\bar p}(N + \bar p)} \dashiint_{Q_{j+1}} u_+^\gamma \d x \d t\Big)^\frac{\bar p}{N(\bar p(1+\frac{\alpha+1}{N})- P)} + 1.
\end{align*}
Since $\lambda(\alpha, N, {\bf p}) < \gamma < P$, we see that the exponent of $M_{j+1}$ in the last expression belongs to the interval $(0,1)$. Thus we can use Young's inequality to obtain
\begin{align*}
 M_j \leq \varepsilon M_{j+1} + c(\varepsilon) b^j\Big(\dashiint_{Q_{2r}(x_o,t_o)} u_+^\gamma \d x \d t\Big)^\frac{1}{\gamma -\lambda(\alpha, N, {\bf p})} + 1,
\end{align*}
where we have also used the fact that $Q_{j+1}$ is contained in $Q_{2r}(x_o,t_o)$. Here we emphasize that the constant $c(\varepsilon)$ depends on $\varepsilon$ in addition to the parameters $\gamma, \alpha, N, {\bf p}$. The constant $b$ depends on $\gamma, \alpha, N, {\bf p}$. By iterating the last estimate we end up with
\begin{align*}
 M_0 \leq \varepsilon^n M_n + c(\varepsilon) \Big(\sum^{n-1}_{k=1} (\varepsilon b)^k\Big) \Big(\dashiint_{Q_{2r}(x_o,t_o)} u_+^\gamma \d x \d t\Big)^\frac{1}{\gamma -\lambda(\alpha, N, {\bf p})} + \sum^{n-1}_{k=1} \varepsilon^k.
\end{align*}
Choose $\varepsilon$ so small that $\varepsilon b < 1$. Then the sums in the last estimate converge as $n\to \infty$. Since the sequence $M_n$ is bounded due to the local boundedness of $u$, the first term vanishes in the limit $n\to \infty$, and we end up with \eqref{est:improved_local_bddness}. 
\end{proof}

\subsection{The case $\bar p \leq \frac{N(\alpha +1)}{N+ \alpha +1}$}
We now turn our attention to the range 
\begin{align}\label{bar-p-small}
 \bar p \leq \frac{N(\alpha +1)}{N+ \alpha +1},
\end{align}
and recall that we also require \eqref{extra_integrability} in this case. In previous section condition $u \in L^P(\Omega_T)$ joint with \eqref{bar-p_lower_bnd} implies \eqref{bar-p-small} and there is no need to make this assumption. As in the previous case, the argument consists of two parts. First, we obtain local boundedness without an explicit bound. Once local boundedness has been established, we follow the approach of DiBenedetto in Section V.10 of \cite{DiBene} to obtain an explicit estimate for the essential supremum.
\begin{lem} \label{lem: bdd-subcritical}
 Let $u\in \A(p_i,\alpha, \Omega_T)$, and suppose that \eqref{bar-p-small} holds and that $u$ satisfies the extra integrability condition \eqref{extra_integrability}. Then $u$ is locally bounded.
\end{lem}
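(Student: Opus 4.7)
The plan is to adapt the De Giorgi iteration of Theorem~\ref{theo:local_bdd_large_bar_p} to the subcritical range \eqref{bar-p-small} by using the auxiliary integrability \eqref{extra_integrability} to compensate for the loss in the Sobolev gain. I keep the truncation levels $k_j=k(1-2^{-j})^{2/(\alpha+1)}$, the decreasing radii $r_j\to \sigma r$, the nested cylinders $Q_j=Q_{r_j}(x_o,t_o)$ and the cut-offs $\eta_j,\psi_j$ exactly as in the proof of Theorem~\ref{theo:local_bdd_large_bar_p}, together with the basic sequence $Y_j=\iint_{Q_j}(u^{(\alpha+1)/2}-k_j^{(\alpha+1)/2})_+^{\theta}\,\d x\,\d t$.

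First, repeating the computation that leads to \eqref{lcalc} --- i.e.\ applying the Sobolev--Troisi inequality \eqref{est:Troisi-application} (possibly through the parabolic embedding of Theorem~\ref{PAS}) to the product $(u-k_{j+1})_+\eta_j$ and bounding the time supremum through the energy estimate \eqref{est:energy} --- produces, after absorbing the derivatives of $\eta_j$ and $\psi_j$, an inequality of the form
\begin{equation*}
Y_{j+1}\le c\,\frac{b^{j}}{r^{\kappa_{1}}(1-\sigma)^{\kappa_{2}}}\Bigl(\,\iint_{Q_{j}}(u-k_{j+1})_+^{\bar p}\,\d x\,\d t\;+\;k^{P}\,\bigl|Q_{j}\cap\{u>k_{j+1}\}\bigr|\,\Bigr)^{\!\kappa_{3}},
\end{equation*}
for exponents $\kappa_{1},\kappa_{2},\kappa_{3}>0$ depending only on $\alpha,N,{\bf p}$. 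In the supercritical range of Theorem~\ref{theo:local_bdd_large_bar_p} this inequality closed via \eqref{est:super_level_set}, but under \eqref{bar-p-small} the would-be gain $\bar p(1+\tfrac{\alpha+1}{N})-P$ is non-positive, so the pure iteration does not improve.

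To restore a strictly positive gain I would invoke the extra integrability \eqref{extra_integrability}: by H\"older's inequality,
\begin{equation*}
\iint_{Q_{j}}(u-k_{j+1})_+^{\bar p}\,\d x\,\d t\le \Bigl(\iint_{Q_{0}}u_+^{m}\,\d x\,\d t\Bigr)^{\!\bar p/m}\bigl|Q_{j}\cap\{u>k_{j+1}\}\bigr|^{1-\bar p/m},
\end{equation*}
and then control the superlevel set measure by the analogue of \eqref{est:super_level_set}, namely $|Q_{j}\cap\{u>k_{j+1}\}|\le c\,k^{-P}2^{\theta(j+1)}\,Y_{j}$. Plugging these two inequalities into the bound for $Y_{j+1}$ produces a recursive estimate of the shape
\begin{equation*}
Y_{j+1}\le C\,b^{j}\,k^{-\rho_{1}}\Bigl(\iint_{Q_{0}}u_+^{m}\,\d x\,\d t\Bigr)^{\!\rho_{2}}\,Y_{j}^{1+\mu},
\end{equation*}
where a direct exponent-count shows that $\mu>0$ precisely when $m>\tfrac{N}{\bar p}(\alpha+1-\bar p)$: this is exactly the role played by the integrability threshold in \eqref{extra_integrability}.

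Once such a recursion is in place, Lemma~\ref{lem:fastconvg} applies and reduces the proof to arranging $Y_{0}$ sufficiently small, which is achieved by selecting $k$ large enough in terms of $r$, $\sigma$, $\norm{u_+}_{L^{m}(Q_{0})}$ and the parameters. We then conclude $u\le k$ almost everywhere on $Q_{\sigma r}(x_{o},t_{o})$, hence $u$ is locally bounded from above, and the matching lower bound follows by repeating the argument for the negative truncations, exactly as in the end of the proof of Lemma~\ref{lem:Energy_Est}. The main technical obstacle is the bookkeeping of H\"older and Sobolev exponents: one must verify that $\mu>0$ is equivalent to the sharp threshold $m>\tfrac{N}{\bar p}(\alpha+1-\bar p)$, that every intermediate interpolation pair is admissible, and that the negative powers of $k$ produced by the superlevel-set estimate genuinely dominate the positive powers coming from the energy right-hand side, so that the recursion can in fact be triggered.
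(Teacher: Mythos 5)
There is a genuine gap at the very first step. Your opening display, which bounds $Y_{j+1}=\iint_{Q_{j+1}}\phi^{\theta}$ directly by a power of the energy right-hand side, is not derivable in the range \eqref{bar-p-small}. In the supercritical proof that passage goes through the H\"older step \eqref{Y-jplusone_estim1}, which requires $\gamma=\tfrac{2\bar p}{\theta}(\tfrac1N+\tfrac1{\alpha+1})>1$; but here $p_N<\alpha+1$ forces $\theta=2$, and then $\gamma=\bar p(\tfrac1N+\tfrac1{\alpha+1})\le 1$ is exactly equivalent to \eqref{bar-p-small}, so that H\"older inequality runs in the wrong direction: the Sobolev machinery of \eqref{lcalc} only controls $\iint\phi^{2\bar p(\frac1N+\frac1{\alpha+1})}$, whose exponent now lies \emph{below} $2$. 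The missing idea is precisely where the extra integrability must enter: one interpolates $\phi^{2}=\phi^{2\mu}\phi^{2(1-\mu)}$ and applies H\"older so that one factor produces the Sobolev quantity $\iint\phi^{2\bar p(\frac1N+\frac1{\alpha+1})}$ and the other produces $\iint\phi^{\ell}$ with $\ell=2m/(\alpha+1)>2$, the latter being absorbed into the constant via $\iint_{Q_0}u_+^{m}$. This is what the paper does with $q=\frac{\ell/2-\bar p(\frac1N+\frac1{\alpha+1})}{\ell/2-1}$, and the verification that the resulting exponent $\frac{N+\bar p}{qN}$ exceeds $1$ is exactly where the threshold in \eqref{extra_integrability} is used.

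Your substitute --- H\"older between $L^{m}$ and the superlevel-set measure applied to the terms on the energy right-hand side --- does not bridge this gap, and it would not reproduce the sharp threshold either: estimating $\iint(u-k_{j+1})_+^{\alpha+1}\le I^{(\alpha+1)/m}|A_j|^{1-(\alpha+1)/m}$ and then $|A_j|\lesssim k^{-(\alpha+1)}4^{j}Y_j$ yields a gain exponent of order $\bigl(1-\tfrac{\alpha+1}{m}\bigr)\tfrac{N+\bar p}{N}$, which exceeds $1$ only for $m>\tfrac{(\alpha+1)(N+\bar p)}{\bar p}$, strictly stronger than \eqref{extra_integrability}; so the claimed ``direct exponent-count'' does not come out as stated. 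A more minor divergence: for this qualitative lemma the paper uses increasing levels $k_j=k(2-2^{-j})^{2/(\alpha+1)}$ and obtains smallness of $Y_0$ simply by letting $k\to\infty$ with dominated convergence (the smallness condition is $k$-independent because the powers of $k$ cancel), rather than extracting a negative power $k^{-\rho_1}$ from the recursion; that bookkeeping is only needed for the subsequent quantitative supremum bound, which the paper proves after local boundedness is already known.
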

\begin{proof}[Proof]
Let $(x_o,t_o) \in \Omega_T$. Choose $r$ so small that $Q_r(x_o,t_o)$ is compactly contained in $\Omega_T$ and take $\sigma \in (0,1)$. We show that $u$ is bounded from above on $Q_{\sigma r}(x_o,t_o)$. By the definition of $m$ in \eqref{extra_integrability} we have
 \begin{align}\label{def:ell}
  \ell := \frac{2m}{\alpha + 1} > 2N\Big(\frac{1}{\bar p} - \frac{1}{\alpha +1}\Big) \geq 2,
 \end{align}
where the last estimate follows from \eqref{bar-p-small}. We also define 
\begin{align*}
 q:= \frac{\ell/2 - \bar p \Big(\frac1N + \frac{1}{\alpha +1}\Big)}{\ell/2 -1} \geq 1, \hspace{7mm} \mu := \frac{\bar p \Big(\frac1N + \frac{1}{\alpha +1}\Big)}{q} \in (0, 1].
\end{align*}
The lower bound for $q$ follows from \eqref{bar-p-small} and from this we deduce the range of $\mu$. Define the cylinders $Q_j$ as in the proof of Theorem \ref{theo:local_bdd_large_bar_p}, and choose
\begin{align*}
Y_j:=\iint_{Q_j}\big(u^\frac{\alpha+1}{2}-k_j^\frac{\alpha+1}{2}\big)_+^2 \d x \d t , \hspace{7mm} k_j:=k(2-2^{-j})^\frac{2}{\alpha+1}.
\end{align*}
We require $k\geq 1$ as before. Define the functions $\eta^s_j$ and $\psi_j$ and $\phi$ as in the proof of Theorem \ref{theo:local_bdd_large_bar_p}. If strict inequality holds in \eqref{bar-p-small} then $q>1$ and we have by H\"older's inequality
\begin{align}\label{est:Y_jplusone-small_bar_p}
 Y_{j+1} &= \iint_{Q_{j+1}} \big(u^\frac{\alpha+1}{2}-k_{j+1}^\frac{\alpha+1}{2}\big)_+^2 \d x \d t = \iint_{Q_{j+1}} \phi^2 \d x \d t = \iint_{Q_{j+1}} \phi^{2\mu} \phi^{2(1-\mu)}\d x \d t
 \\
 \notag &\leq \Big(\iint_{Q_{j+1}} \phi^{2\mu q}\d x \d t\Big)^\frac1q \Big(\iint_{Q_{j+1}} \phi^{2(1-\mu)\frac{q}{q-1}} \d x \d t\Big)^\frac{q-1}{q}
 \\
\notag & = \Big(\iint_{Q_{j+1}} \phi^{2\bar p (\frac1N + \frac{1}{\alpha +1})}\d x \d t\Big)^\frac1q \Big(\iint_{Q_{j+1}} \phi^\ell \d x \d t\Big)^\frac{q-1}{q}
\\
\notag &\leq \Big(\iint_{Q_{j+1}} \phi^{2\bar p (\frac1N + \frac{1}{\alpha +1})}\d x \d t\Big)^\frac1q \Big(\iint_{Q_0} u_+^m \d x \d t\Big)^\frac{q-1}{q}.
\end{align}
If instead \eqref{bar-p-small} holds with equality then $q=1$ and the previous estimate is seen to be valid without applying H\"older's inequality. We note that the estimate \eqref{lcalc} is valid for the first integral on the last line, despite the different definition of $k_j$. The upper bound for the exponents $p_s$ in \eqref{parameter-range} together with \eqref{bar-p-small} show that $p_s < \alpha + 1$. Therefore, similarly as in \eqref{est:u-k_j-diff-to_p_s} we have
\begin{align}\label{est:rg}
 (u-k_{j+1})_+^{p_s}  &\leq c(u^\frac{\alpha+1}{2} - k_j^\frac{\alpha+1}{2})_+^2 + c(1 + k_j^{p_s})\chi_{\{u>k_{j+1}\}}.
\end{align}
Combining this with \eqref{lcalc} we obtain
\begin{align*}
 \iint_{Q_{j+1}} \phi^{2\bar p (\frac1N + \frac{1}{\alpha +1})}\d x \d t \leq c r^{-\frac{N+\bar p}{N}}  (1-\sigma)^{-\frac{p_N(N+\bar p)}{N}} 2^{j\frac{p_N(N+\bar p)}{N}} \big(  Y_j + k^{p_N} |Q_j \cap \{u > k_{j+1}\}| \big)^\frac{N+ \bar p}{N},
\end{align*}
and reasoning as in \eqref{est:super_level_set} we have 
\begin{align*}
 |Q_j \cap \{u > k_{j+1}\}| \leq 2^{2j} k^{-(\alpha + 1)} Y_j.
\end{align*}
Noting that in the current parameter range $p_N< \alpha + 1$ and recalling that $k \geq 1$ we can now combine the last three estimates to conclude that 
\begin{align*}
 Y_{j+1} \leq c r^{-\frac{N+\bar p}{qN}}  (1-\sigma)^{-\frac{p_N(N+\bar p)}{qN}} I^\frac{q-1}{q} b^j Y_j^\frac{N+\bar p}{qN},
\end{align*}
where $I$ denotes the integral over $Q_0$ appearing on the last line of \eqref{est:Y_jplusone-small_bar_p} and $b$ is a constant depending only on $N, \alpha, {\bf p}, \ell$. Note that boundedness from above is obvious if $I=0$, so we assume henceforth that $I>0$. By the definition of $q$ and the lower bound for $\ell$ we see that
\begin{align*}
 \frac{N+\bar p}{qN} = \Big(\frac{N + \bar p}{N}\Big) \frac{\ell/2 -1}{\ell/2 - \bar p \big(\frac1N + \frac{1}{\alpha +1}\big)} &= \Big(\frac{N + \bar p}{N}\Big) \Big(1 + \frac{\bar p\big(\frac1N + \frac{1}{\alpha +1}\big) -1}{\ell/2 - \bar p \big(\frac1N + \frac{1}{\alpha +1}\big)}\Big)
 \\
 &> \Big(\frac{N + \bar p}{N}\Big) \Big(1 + \frac{\bar p\big(\frac1N + \frac{1}{\alpha +1}\big) -1}{N\big(\frac{1}{\bar p} - \frac{1}{\alpha+1}\big) - \bar p \big(\frac1N + \frac{1}{\alpha +1}\big)}\Big) = 1.
\end{align*}
Thus, the sequence $(Y_j)$ satisfies the recursive estimate of Lemma \ref{lem:fastconvg} with
\begin{align*}
 C = c r^{-\frac{N+\bar p}{qN}}  (1-\sigma)^{-\frac{p_N(N+\bar p)}{qN}} I^\frac{q-1}{q}, \hspace{7mm} \delta= \frac{N+\bar p}{qN} -1 > 0. 
\end{align*}
By Remark \ref{lem:fastconvg}, the sequence $(Y_j)$ converges to zero provided that 
\begin{align*}
 \iint_{Q_r(x_o,t_o)} \big(u^\frac{\alpha+1}{2}-k^\frac{\alpha+1}{2}\big)_+^2 \d x \d t = Y_0 \leq C^{-\frac{1}{\delta}} b^{-\frac{1}{\delta^2}} = c r^\frac{N + \bar p}{N + \bar p  - qN} (1-\sigma)^\frac{p_N(N + \bar p)}{N + \bar p  - qN} I^{-\frac{N(q-1)}{N + \bar p -qN}}.
\end{align*}
The integral on the left-hand side vanishes in the limit $k\to \infty$ by the dominated convergence theorem and the expression on the right-hand side is independent of $k$, so the estimate must indeed hold for large $k$. Thus, we have
\begin{align*}
 \iint_{Q_{\sigma r}(x_o,t_o)} (u^\frac{\alpha+1}{2} - 2k^\frac{\alpha+1}{2})_+^2 \d x \d t \leq Y_j \to 0,
\end{align*}
which implies that $u\leq 2^\frac{2}{\alpha+1} k$ a.e. in $Q_{\sigma r}(x_o, t_o)$ for some sufficiently large $k$.
\end{proof}
Knowing that solutions are locally bounded we now proceed to prove an explicit upper bound.
\begin{theo}
 Let $u \in \A(p_i,\alpha,\Omega_T)$  and suppose that \eqref{bar-p-small} holds and that $u$ satisfies the integrability condition \eqref{extra_integrability}. Then $u$ is locally bounded in $\Omega_T$ and if $Q_{2r}(x_o,t_o)$ is compactly contained in $\Omega$ we have the following explicit bound:
\begin{align*}
 \esssup_{Q_r(x_o,t_o)} u \leq c \Big( \Big(\dashiint_{Q_{2r}(x_o,t_o)} u_+^m\d x \d t \Big)^\frac{1}{m - \frac{N}{\bar p}(\alpha + 1 - \bar p)} + 1\Big),
\end{align*}
for a constant $c$ depending only on $N, \alpha, {\bf p}, \Lambda$ and $m$.
\end{theo}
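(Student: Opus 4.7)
Since Lemma \ref{lem: bdd-subcritical} already yields qualitative local boundedness of $u$, what remains is to pin down an explicit bound. The plan is to mimic the two-step strategy behind Theorem \ref{thm:improved_local_bddness}: first derive an intermediate estimate whose right-hand side still contains the essential supremum of $u$ over a slightly larger cylinder, but only to a sub-linear power; then iterate on a nested family of cylinders and absorb the sup-term via Young's inequality.

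For the intermediate estimate, I would retrace the De Giorgi iteration from the proof of Lemma \ref{lem: bdd-subcritical}, this time carrying along the explicit dependence on the test level $k$ and on $M := \esssup_{Q_r(x_o,t_o)} u_+$ (known to be finite a priori). The iteration converges as soon as $Y_0$ lies below the threshold
\begin{equation*}
c\,r^{(N+\bar p)/D}\,(1-\sigma)^{p_N(N+\bar p)/D}\Big(\iint_{Q_r(x_o,t_o)} u_+^m \d x \d t\Big)^{-N(q-1)/D},
\end{equation*}
with $D:=N+\bar p - qN$. Using the crude bound $Y_0 \leq M^{\alpha+1}|Q_r\cap\{u>k\}| \leq M^{\alpha+1}k^{-m}\iint_{Q_r} u_+^m$ (Chebyshev plus the definition of $M$) and solving for $k$, then passing to averaged integrals, the powers of $r$ cancel out thanks to the identity $D = \bar p(m-\lambda)/(m-\alpha-1)$ with $\lambda := \tfrac{N}{\bar p}(\alpha+1-\bar p)$, which is valid because $P=\alpha+1$ in the present subcritical range. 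The outcome is the intermediate inequality
\begin{equation*}
\esssup_{Q_{\sigma r}(x_o,t_o)} u \leq c\,(1-\sigma)^{-\beta}\, M^{(\alpha+1)/m}\,\Big(\dashiint_{Q_r(x_o,t_o)} u_+^m \d x \d t\Big)^{(m-\alpha-1)/(m(m-\lambda))},
\end{equation*}
valid for every $\sigma \in (0,1)$ and with $\beta$ a suitable constant.

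For the bootstrap, I would set $r_n := (2-2^{-n})r$, $Q_n := Q_{r_n}(x_o,t_o)$, $M_n := \esssup_{Q_n} u_+$, apply the intermediate estimate between $Q_n$ and $Q_{n+1}$ with $\sigma = r_n/r_{n+1}$, and invoke Young's inequality with conjugate exponents $m/(\alpha+1)$ and $m/(m-\alpha-1)$, which is legitimate since $m>\alpha+1$ in this regime. As $(m-\alpha-1)/(m(m-\lambda))$ multiplied by $m/(m-\alpha-1)$ equals $1/(m-\lambda)$, this produces
\begin{equation*}
M_n \leq \varepsilon\,M_{n+1} + c(\varepsilon)\, b^n\,\Big(\dashiint_{Q_{2r}(x_o,t_o)} u_+^m \d x \d t\Big)^{1/(m-\lambda)} + 1.
\end{equation*}
Choosing $\varepsilon$ so small that $\varepsilon b < 1$, iterating in $n$, and letting $n\to \infty$ (allowed since $(M_n)$ is bounded by the qualitative conclusion of Lemma \ref{lem: bdd-subcritical}) recovers the stated estimate.

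The main obstacle is the algebraic book-keeping in the intermediate step: one must verify that the exponents generated by the iteration in Lemma \ref{lem: bdd-subcritical} rearrange exactly so that $M$ carries the sub-linear power $(\alpha+1)/m$ and that the Young absorption produces precisely $1/(m-\lambda)$ on the $L^m$-average. This rests on the fact that $p_N<\alpha+1$ in the subcritical regime \eqref{bar-p-small} (so $P=\alpha+1$) together with the specific choice $\ell = 2m/(\alpha+1)$ made in Lemma \ref{lem: bdd-subcritical}; any mismatch in these identities would degrade the final exponent.
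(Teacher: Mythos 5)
Your proposal is correct, and it takes a genuinely different route from the paper's. The paper proves the explicit bound by re-running the De Giorgi iteration from scratch with a \emph{modified} quantity $Y_j := \iint_{Q_j} (u^{(\alpha+1)/2}-k_j^{(\alpha+1)/2})_+^\ell \d x \d t$ (exponent $\ell = 2m/(\alpha+1)$ rather than $2$), and extracting the sub-linear power of the supremum right at the start through the pointwise bound $\phi^\ell \leq \|u_+\|_{L^\infty(Q_0)}^\gamma\,\phi^{2\bar p(\frac1N+\frac{1}{\alpha+1})}$ with $\gamma = \frac{\alpha+1}{2}\bigl(\ell - 2\bar p(\frac1N+\frac{1}{\alpha+1})\bigr)$; this has the convenient side-effect that $Y_0 = \iint_{Q_r} u_+^m$ directly since $k_0=0$. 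You instead keep the exponent-$2$ quantities and the recursive inequality already established in Lemma \ref{lem: bdd-subcritical} unchanged, and inject the $M$- and $k$-dependence only at the threshold step via the Chebyshev bound $Y_0 \leq M^{\alpha+1}k^{-m}\iint_{Q_r} u_+^m$. I verified the algebra: the identity $D := N+\bar p-qN = \bar p(m-\lambda)/(m-\alpha-1)$ holds (it is a consequence of $\ell=2m/(\alpha+1)$ and the definition of $q$, not really of $P=\alpha+1$ as you remark, but that is only an imprecision of exposition), the $r$-powers cancel as claimed, the exponent $(\alpha+1)/m$ of $M$ is in $(0,1)$ because $m>\lambda\geq\alpha+1$ in the subcritical range, and after the Young step with conjugate exponents $m/(\alpha+1)$ and $m/(m-\alpha-1)$ you obtain the same final exponent $1/(m-\lambda)$ on the $L^m$-average and the same power $p_N(N+\bar p)/(\bar p(m-\lambda))$ of $(1-\sigma)^{-1}$ as the paper does, so the iteration closes identically. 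Your route buys economy — no new iteration is needed, Lemma \ref{lem: bdd-subcritical} is reused as a black box — at the cost of an extra Chebyshev estimate; the paper's route yields a cleaner $Y_0$ at the cost of re-deriving the recursion with the $\ell$-exponent and tracking $\|u_+\|_{L^\infty(Q_0)}^\gamma$ through it. One small thing to make explicit when writing this up: the De Giorgi iteration in Lemma \ref{lem: bdd-subcritical} requires $k\geq 1$, so the intermediate inequality should really read $\esssup_{Q_{\sigma r}} u \leq \max\{1, c(1-\sigma)^{-\beta}M^{(\alpha+1)/m}(\dashiint_{Q_r}u_+^m)^{(m-\alpha-1)/(m(m-\lambda))}\}$; you correctly re-insert the $+1$ in the bootstrap, but the intermediate display is missing it.
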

\begin{proof}[Proof]
 We first derive a bound on cylinders of the form $Q_{\sigma r}(x_o,t_o)$ when $\sigma \in (0,1)$ and $Q_r(x_o,t_o)$ is compactly contained in $\Omega_T$. Define the radii $r_j$, the cylinders $Q_j$ and the numbers $k_j$ as in the proof of Theorem \ref{theo:local_bdd_large_bar_p}, and let $k\geq 1$. We again let $\ell$ be defined as in \eqref{def:ell} and consider the sequence of integrals 
 \begin{align*}
  Y_{j+1}:= \iint_{Q_{j+1}} \big(u^\frac{\alpha+1}{2}-k_{j+1}^\frac{\alpha+1}{2}\big)_+^\ell \d x \d t  \leq \norm{u_+}_{L^\infty(Q_0)}^\gamma \iint_{Q_{j+1}} \big(u^\frac{\alpha+1}{2}-k_{j+1}^\frac{\alpha+1}{2}\big)_+^{2\bar p (\frac{1}{N}+\frac{1}{\alpha + 1})} \d x \d t,
 \end{align*}
where 
\begin{align*}
 \gamma := \Big(\frac{\alpha+1}{2}\Big)\Big(\ell - 2\bar p \big(\frac{1}{N}+\frac{1}{\alpha + 1}\big) \Big) > 0,
\end{align*}
and the last estimate follows from combining \eqref{def:ell} and \eqref{bar-p-small}. We note that we can again use \eqref{lcalc} to estimate the last integral expression to conclude that 
\begin{align*}
 Y_{j+1} \leq c \frac{\norm{u_+}_{L^\infty(Q_0)}^\gamma 2^{j\frac{p_N(N+\bar p)}{N}} }{r^{\frac{N+\bar p}{N}}  (1-\sigma)^{\frac{p_N(N+\bar p)}{N}}} \Big[ \iint_{Q_j} \sum^N_{s=1} (u-k_{j+1})_+^{p_s} + \big(u^\frac{\alpha+1}{2}-k_{j+1}^\frac{\alpha+1}{2})_+^2 \d x \d t\Big]^\frac{N+ \bar p}{N}.
\end{align*}
Making use of \eqref{est:rg} and reasoning as in \eqref{est:super_level_set} we end up with 
\begin{align*}
 \iint_{Q_j} \sum^N_{s=1} (u-k_{j+1})_+^{p_s} + \big(u^\frac{\alpha+1}{2}-k_{j+1}^\frac{\alpha+1}{2})_+^2 \d x \d t &\leq c 2^{2j}\iint_{Q_j\cap \{u>k_{j+1}\}} \big(u^\frac{\alpha+1}{2}-k_{j}^\frac{\alpha+1}{2})_+^2 \d x \d t
 \\
 &= c 2^{2j}\iint_{Q_j\cap \{u>k_{j+1}\}} \frac{\big(u^\frac{\alpha+1}{2}-k_{j}^\frac{\alpha+1}{2})_+^\ell}{ \big(u^\frac{\alpha+1}{2}-k_{j}^\frac{\alpha+1}{2})_+^{\ell-2}} \d x \d t
 \\
 &\leq c k^{- \frac{(\alpha+1)}{2} (\ell-2)} 2^{\ell j} Y_j.
\end{align*}
Combining the last two estimates we obtain
\begin{align*}
  Y_{j+1} \leq c \norm{u_+}_{L^\infty(Q_0)}^\gamma r^{-\frac{N+\bar p}{N}}  (1-\sigma)^{-\frac{p_N(N+\bar p)}{N}} k^{- \frac{(\alpha+1)}{2}\frac{(N+\bar p)}{N} (\ell-2)} b^j Y_j^{1+\frac{\bar p}{N}},
\end{align*}
where the constants $c$ and $b$ depend on $\alpha, N, {\bf p}, \ell$. From Lemma \ref{lem:fastconvg} we see that $(Y_j)$ converges to zero provided that 
\begin{align*}
 \iint_{Q_r(x_o,t_o)} u_+^m\d x \d t = Y_0 \leq c \norm{u_+}_{L^\infty(Q_0)}^{-\frac{N \gamma}{\bar p}} r^{\frac{N+\bar p}{ \bar p}} (1-\sigma)^{\frac{p_N(N+\bar p)}{\bar p}} k^{\frac{(\alpha+1)}{2}\frac{(N+\bar p)}{\bar p} (\ell-2)},
\end{align*}
which holds if 
\begin{align}\label{bound:k}
 k \geq c\norm{u_+}_{L^\infty(Q_r(x_o,t_o))}^\zeta (1-\sigma)^{-\frac{2p_N}{(\alpha+1)(\ell-2)}} \Big(\dashiint_{Q_r(x_o,t_o)} u_+^m\d x \d t \Big)^\frac{2 \bar p}{(\alpha+1)(N+\bar p)(\ell-2)},
\end{align}
where 
\begin{align*}
 \zeta = \frac{2N\gamma}{(N+\bar p)(\alpha+1)(\ell-2)} &= \frac{N}{(N+\bar p)(\ell-2)}\Big(\ell - 2\bar p \big(\frac{1}{N}+\frac{1}{\alpha + 1}\big) \Big)
 \\
 & = \frac{N}{N+\bar p}\Big(1 + \frac{2-2\bar p(\frac1N + \frac{1}{\alpha+1})}{\ell -2 }\Big).
\end{align*}
If \eqref{bar-p-small} holds with equality then the numerator in the last fractional expression vanishes and $\zeta = N/(N+\bar p) < 1$. If instead \eqref{bar-p-small} holds with a strict inequality then also the last inequality in \eqref{def:ell} is strict and we may estimate
\begin{align*}
 \zeta < \frac{N}{N+\bar p}\Big(1 + \frac{2-2\bar p(\frac1N + \frac{1}{\alpha+1})}{2N\big(\frac1N-\frac{1}{\alpha+1}\big) -2 }\Big)  = 1.
\end{align*}
Thus, we have confirmed that in any case $\zeta \in (0,1)$. We have showed that if \eqref{bound:k} and $k\geq 1$ hold, then
\begin{align*}
 \iint_{Q_{\sigma r}(x_o,t_o)} \big(u^\frac{\alpha+1}{2}-k^\frac{\alpha+1}{2}\big)_+^\ell \d x \d t \leq Y_j \to 0,
\end{align*}
which means that $u\leq k$ in the cylinder $Q_{\sigma r}(x_o,t_o)$. Thus,
\begin{align*}
 \norm{u_+}_{L^\infty(Q_{\sigma r}(x_o,t_o))}\leq \frac{c\norm{u_+}_{L^\infty(Q_r(x_o,t_o))}^\zeta}{ (1-\sigma)^{\frac{2p_N}{(\alpha+1)(\ell-2)}} }  \Big(\dashiint_{Q_r(x_o,t_o)} u_+^m\d x \d t \Big)^\frac{2 \bar p}{(\alpha+1)(N+\bar p)(\ell-2)} + 1.
\end{align*}
Since $\zeta \in (0,1)$, we may use Young's inequality on the right-hand side to conclude that
\begin{align*}
 \norm{u_+}_{L^\infty(Q_{\sigma r}(x_o,t_o))} &\leq \varepsilon \norm{u_+}_{L^\infty(Q_r(x_o,t_o))} 
 \\
 & \hphantom{\leq} + c(\varepsilon) (1-\sigma)^{-\frac{2p_N}{(\alpha+1)(\ell-2)(1-\zeta)}} \Big(\dashiint_{Q_r(x_o,t_o)} u_+^m\d x \d t \Big)^\frac{1}{m - \frac{N}{\bar p}(\alpha + 1 - \bar p)} + 1.
\end{align*}
where we may choose $\varepsilon$ arbitrarily small. Now an iteration similar to that performed in the proof of Theorem \ref{thm:improved_local_bddness} leads to the desired estimate. 
\end{proof}

\section{Semicontinuity and Critical Mass Lemma}

\label{sec: semicontinuity}
\noindent 
In this Section we show a measure theoretical maximum principle for functions in $\A(p_i,\alpha, \Omega_T)$, usually referred to as De Giorgi-type Lemma, or Critical Mass Lemma. As a consequence, the elements of $\A(p_i,\alpha, \Omega_T)$ are lower-semicontinuous. For convenience, we formulate the result for solutions defined in a neighborhood of the origin, but evidently the result is translation invariant.

\begin{lem}[De Giorgi-type]\label{lem:DG-type}
Suppose $u\in \A(p_i,\alpha,\Omega \times (-T,T))$ and suppose that $\rho>0$ is so small that 
\begin{align*}
 \mathcal{Q}_\rho := K_\rho \times (-\rho,\rho) = \prod_{j=1}^N (-\rho^{\frac{1}{p_j}}, \rho^{\frac{1}{p_j}}) \times (-\rho,\rho)
\end{align*}
is compactly contained in the domain of $u$. Let $a \in (0,1)$, $M>0$ and suppose that $\mu^-$ and $\mu^+$ satisfy 
\begin{align*}
 \mu^- \leq \essinf_{\mathcal{Q}_\rho} u, \quad \mu^+ \geq \esssup_{\mathcal{Q}_\rho} u.
\end{align*}
Then there exists a constant $\nu^- \in (0,1)$ depending only on $a, M, \mu^-$ and the data (and independent of $\rho$) such that 
\begin{align}\label{est: measure-above}
 |\{u \leq \mu^- + M \}\cap \mathcal{Q}_\rho| \leq \nu^- |\mathcal{Q}_\rho| \quad \Rightarrow \quad u \geq \mu^- + aM \,  \textrm{ a.e. in }  \mathcal{Q}_{\rho/2}.
\end{align}
Similarly, there exists a constant $\nu^+ \in (0,1)$ depending only on $a, M, \mu^+$ and the data (and independent of $\rho$) such that 
\begin{align}\label{est: measure-below}
 |\{u \geq \mu^+ - M \}\cap \mathcal{Q}_\rho| \leq \nu^+ |\mathcal{Q}_\rho| \quad \Rightarrow \quad u \leq \mu^+ - aM \,  \textrm{ a.e. in }  \mathcal{Q}_{\rho/2}.
\end{align}
\end{lem}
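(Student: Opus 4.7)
The proof proceeds by a De Giorgi iteration on anisotropic cylinders, adapted to the doubly nonlinear structure. Since the two statements are symmetric under the substitution $u \mapsto -u$ (which preserves membership in $\A(p_i,\alpha,\cdot)$ due to the symmetric form of the energy estimate in Lemma \ref{lem:Energy_Est}), I will focus on \eqref{est: measure-above}. The plan is to set up decreasing cylinders and truncation levels converging to the desired bound from above, extract a recursive inequality for the measures of the bad sets via the energy estimate and the anisotropic Sobolev embedding, and finally close the iteration with Lemma \ref{lem:fastconvg}.

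\textbf{Setup.} Define shrinking radii $\rho_n := \rho/2 + \rho \, 2^{-n-1}$ and cylinders $\mathcal{Q}_n := K_{\rho_n} \times (-\rho_n, \rho_n)$, truncation levels $k_n := \mu^- + aM + (1-a) M\, 2^{-n}$ (so that $k_n \downarrow \mu^- + aM$), and piecewise smooth cutoffs $\eta_n$ of the form \eqref{expr:eta}, equal to $1$ on $K_{\rho_{n+1}}$ and vanishing outside $K_{\rho_n}$ with $|\partial_i \eta_n^{1/p_i}|^{p_i} \leq c\, 2^{np_i} \rho^{-1}$, together with a time cutoff $\varphi_n \in C^\infty(\R;[0,1])$ supported in $(-\rho_n, \rho_n)$, equal to $1$ on $(-\rho_{n+1}, \rho_{n+1})$, and with $|\partial_t \varphi_n| \leq c\, 2^n \rho^{-1}$.

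\textbf{Energy + embedding.} Applying Lemma \ref{lem:Energy_Est} with $k = k_n$, $\eta = \eta_n$, $\varphi = \varphi_n$ (negative-part version) yields
\[
\sum_{j=1}^N \iint_{\mathcal{Q}_n} |\partial_j[(u-k_n)_- \eta_n]|^{p_j} \varphi_n\, dx\, dt + \sup_\tau \int_{K_{\rho_n}\times\{\tau\}} \big(u^{\frac{\alpha+1}{2}} - k_n^{\frac{\alpha+1}{2}}\big)_-^2 \eta_n \varphi_n\, dx \leq c\, b^n \rho^{-1} I_n,
\]
where $I_n$ is the integral of $\sum_j (u-k_n)_-^{p_j} + (u^{(\alpha+1)/2}-k_n^{(\alpha+1)/2})_-^2$ over $\mathcal{Q}_n \cap \{u < k_n\}$, and $b$ depends only on $\bf p, N$. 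Applying the parabolic anisotropic Sobolev embedding (Theorem \ref{PAS}) to $\phi_n := (u-k_n)_- \eta_n$ yields an estimate of the form $\iint \phi_n^{q_0} \leq (\text{energy})^{1 + \frac{\bar p}{N}}$ for some $q_0 > \alpha+1$, exactly as in the computation \eqref{lcalc} used in Section \ref{sec: localboundedness}.

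\textbf{Recursive inequality.} Define $A_n := \{u < k_n\} \cap \mathcal{Q}_n$ and $Y_n := |A_n|/|\mathcal{Q}_n|$. Using Lemma \ref{estimates:boundary_terms}(ii), on $A_n$ the quantities $(u-k_n)_-$ and $(u^{(\alpha+1)/2} - k_n^{(\alpha+1)/2})_-$ are equivalent up to a multiplicative constant $c(\alpha, \mu^-, M)$, since $u$ and $k_n$ both lie in $[\mu^-, \mu^- + M]$ on $A_n$ (so $|u|+|k_n|$ stays in a compact subinterval determined by $\mu^-$ and $M$). Consequently $I_n \leq c(\alpha, \mu^-, M)\, M^{P}|A_n|$. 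Combining this with the Sobolev step and with the elementary inclusion
\[
A_{n+1} \subseteq \{(u-k_n)_- > (1-a)M 2^{-n-1}\} \cap \mathcal{Q}_n,
\]
from which $|A_{n+1}| \leq \big((1-a)M 2^{-n-1}\big)^{-q_0} \iint \phi_n^{q_0}$, one derives (after taking care of the anisotropic scaling $|\mathcal{Q}_n| \sim \rho^{1+\sum 1/p_i}$, so that all $\rho$-powers cancel when passing to the normalized $Y_n$) a recursion
\[
Y_{n+1} \leq C(a, M, \mu^-, \alpha, N, \mathbf{p}, \Lambda)\, b^n\, Y_n^{1+\delta},
\]
for some $\delta > 0$ independent of $\rho$.

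\textbf{Conclusion.} By Lemma \ref{lem:fastconvg}, if $Y_0 \leq C^{-1/\delta} b^{-1/\delta^2} =: \nu^-$, then $Y_n \to 0$, which means $u \geq \mu^- + aM$ a.e. on $\mathcal{Q}_{\rho/2} = \bigcap_n \mathcal{Q}_n$. The assertion \eqref{est: measure-below} follows by applying the argument to $-u$.

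\textbf{Main obstacle.} The delicate point is that in the purely doubly nonlinear range $\alpha \in (0,1)$, the quantity actually controlled by the energy estimate is $(u^{(\alpha+1)/2} - k^{(\alpha+1)/2})_-^2$, not $(u-k)_-^2$. These two are only equivalent via Lemma \ref{estimates:boundary_terms}(ii) with a constant that depends on the magnitudes of $u$ and $k$; this is precisely where the dependence of $\nu^-$ on $\mu^-$ (rather than on the data alone, as in the standard $p$-Laplacian De Giorgi Lemma) enters, and it must be tracked carefully when the level $\mu^-$ is close to $0$ so that $(|u|+|k_n|)^{\alpha-1}$ blows up. All other steps are routine adaptations of the iteration already carried out in Section \ref{sec: localboundedness}.
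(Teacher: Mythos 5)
Your overall strategy — shrinking cylinders $\mathcal{Q}_n$, levels $k_n \downarrow \mu^- + aM$, the negative-part energy estimate from Lemma~\ref{lem:Energy_Est} combined with the anisotropic Sobolev embedding, normalizing to $Y_n = |A_n|/|\mathcal{Q}_n|$, and closing via Lemma~\ref{lem:fastconvg} — is exactly the one the paper follows, and the ``main obstacle'' you identify is indeed the crux. But the way you propose to resolve it is not correct, and you stop short of fixing it.

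Specifically, your claim that on $A_n$ the quantities $(u-k_n)_-$ and $\bigl(u^{\frac{\alpha+1}{2}} - k_n^{\frac{\alpha+1}{2}}\bigr)_-$ are comparable up to a constant $c(\alpha, \mu^-, M)$, because ``$|u|+|k_n|$ stays in a compact subinterval,'' fails whenever the levels can approach zero — i.e.\ whenever $\mu^- + aM \leq 0 \leq \mu^- + M$. In that case $|u|+|k_n|$ has no positive lower bound on $A_n$, so $(|u|+|k_n|)^{\alpha-1}$ has no uniform \emph{upper} bound there, and the direction $\b_\alpha[u,k_n]\chi_{\{u<k_n\}} \lesssim (u-k_n)_-^2$ of your claimed equivalence breaks down; that is precisely the direction you invoke to bound $I_n$ by $c\,M^P|A_n|$. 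The paper avoids the equivalence altogether and uses two one-sided pointwise bounds that each hold unconditionally: on the right-hand side of the energy estimate it uses $(|u|+|k_n|)^{\alpha-1} \leq (u-k_n)_-^{\alpha-1}$ (valid on $\{u<k_n\}$ since $\alpha-1<0$ and $(u-k_n)_- \leq |u|+|k_n|$; this is in the spirit of Lemma~\ref{estimates:boundary_terms}(iii)), yielding the uniform bound $\b_\alpha[u,k_n]\chi_{\{u<k_n\}} \leq c\,(u-k_n)_-^{\alpha+1} \leq c\,M^{\alpha+1}$ with no dependence on $\mu^-$; on the left-hand side it only needs the \emph{lower} bound $(|u|+|k_n|)^{\alpha-1} \geq (2L)^{\alpha-1}$ with $L = \max\{|\mu^-|,|\mu^-+M|\}$, which is where the $\mu^-$-dependence of $\nu^-$ legitimately enters. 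Your hand-wave that this ``must be tracked carefully when the level $\mu^-$ is close to $0$'' is therefore a genuine gap: the equivalence you assert is simply false there, and the asymmetric argument above is what is actually needed to make the recursion go through.
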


\vskip0.2cm \noindent 
By Lemma \ref{lem:DG-type}, the function $u$ satisfies property $(\mathcal{D})$ of \cite{Naian} and therefore we obtain the following pointwise behaviour as a corollary.
\begin{cor}
Let $u \in \A(p_i,\alpha, \Omega_T)$, then $u$ has a lower-semicontinuous representative.
\end{cor}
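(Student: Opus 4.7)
The plan is to follow the strategy pioneered by Liao (referenced as \cite{Naian}): construct the canonical lower-semicontinuous representative by means of a shrinking-cylinder essential infimum, and use the De Giorgi-type Lemma \ref{lem:DG-type} to identify it with $u$ almost everywhere. Concretely, for each $(x_o,t_o)\in \Omega_T$ set
\begin{equation*}
u_*(x_o,t_o) \defeq \lim_{\rho \to 0^+} \essinf_{\mathcal{Q}_\rho(x_o,t_o)} u,
\end{equation*}
noting that the limit exists (possibly $+\infty$) because the essential infimum is monotone non-decreasing as $\rho \downarrow 0$. The restriction to anisotropic cylinders $\mathcal{Q}_\rho$ is the natural one since it is the geometry in which the Critical Mass Lemma is formulated.

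The first step is to verify that $u_*$ is lower-semicontinuous. This is a purely topological fact once the cylinders are nested appropriately: given any sequence $(x_n,t_n)\to (x_o,t_o)$ and any fixed $\rho>0$, for $n$ large enough one has $\mathcal{Q}_{\rho/2}(x_n,t_n)\subset \mathcal{Q}_\rho(x_o,t_o)$, hence $u_*(x_n,t_n)\geq \essinf_{\mathcal{Q}_{\rho/2}(x_n,t_n)} u \geq \essinf_{\mathcal{Q}_\rho(x_o,t_o)}u$. Taking $\liminf_n$ and then $\rho \to 0$ yields $\liminf_n u_*(x_n,t_n)\geq u_*(x_o,t_o)$.

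The second step is to prove $u_*=u$ almost everywhere. One inequality is immediate: for each $\rho>0$ we have $u_*(x_o,t_o)\leq u(x,t)$ for a.e.\ $(x,t)\in \mathcal{Q}_\rho(x_o,t_o)$, so integrating and using Lebesgue's differentiation theorem gives $u_*\leq u$ at every Lebesgue point. For the reverse inequality, suppose for contradiction that at some Lebesgue point $(x_o,t_o)$ we have $u_*(x_o,t_o)<u(x_o,t_o)$. Pick $M>0$ and $a\in (0,1)$ with $u_*(x_o,t_o)+M<u(x_o,t_o)$ and $u_*(x_o,t_o)+aM>u(x_o,t_o)/2 + u_*(x_o,t_o)/2$ (any fixed choices work). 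Since $(x_o,t_o)$ is a Lebesgue point of $u$ and $u(x_o,t_o)$ strictly exceeds $u_*(x_o,t_o)+M$, the density of the sublevel set $\{u\leq u_*(x_o,t_o)+M\}$ in $\mathcal{Q}_\rho(x_o,t_o)$ tends to zero as $\rho\to 0$. In particular, for all sufficiently small $\rho$ the measure-theoretic hypothesis of \eqref{est: measure-above} is satisfied with $\mu^-=u_*(x_o,t_o)$; Lemma \ref{lem:DG-type} then forces $u\geq u_*(x_o,t_o)+aM$ almost everywhere in $\mathcal{Q}_{\rho/2}(x_o,t_o)$, which contradicts the definition of $u_*$ as the limit of the essential infima. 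Hence $u=u_*$ a.e., and $u_*$ is the desired representative.

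The main delicate point is ensuring that $\nu^-$ in Lemma \ref{lem:DG-type} can be chosen uniformly in $\rho$ for the fixed parameters $a,M,\mu^-$ produced in the contradiction argument; this is already built into the statement of the lemma (which asserts independence on $\rho$), so the contradiction goes through. A minor technicality is handling the possibility that $u_*(x_o,t_o)=+\infty$ on a measure-zero set, which poses no difficulty since the conclusion $u=u_*$ a.e.\ is unaffected by modification on a null set and the lower-semicontinuity of $u_*$ is preserved.
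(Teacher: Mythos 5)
Your strategy is the same one the paper uses: the corollary is obtained there by observing that Lemma \ref{lem:DG-type} furnishes exactly property $(\mathcal{D})$ of \cite{Naian} and importing Liao's semicontinuity theorem as a black box, and your proposal simply unpacks that argument. There are, however, two slips in the execution. First, the claim ``$u_*(x_o,t_o)\leq u(x,t)$ for a.e.\ $(x,t)\in \mathcal{Q}_\rho(x_o,t_o)$'' is false in general: since $u_*(x_o,t_o)=\sup_{\rho'>0}\essinf_{\mathcal{Q}_{\rho'}(x_o,t_o)}u\geq\essinf_{\mathcal{Q}_\rho(x_o,t_o)}u$, the function $u$ can fall below $u_*(x_o,t_o)$ on a positive-measure subset of $\mathcal{Q}_\rho(x_o,t_o)$. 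What you actually want is $\essinf_{\mathcal{Q}_\rho(x_o,t_o)}u\leq\dashiint_{\mathcal{Q}_\rho(x_o,t_o)}u\,\d x\d t$, which upon letting $\rho\to 0$ gives $u_*\leq u$ at every Lebesgue point. (The tacit fact that a.e.\ point is a Lebesgue point with respect to the anisotropic family $\{\mathcal{Q}_\rho\}$ should also be noted; it holds because Lebesgue measure is doubling for these cylinders.)

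Second, and more seriously, the choice $\mu^-=u_*(x_o,t_o)$ in the appeal to Lemma \ref{lem:DG-type} is inadmissible: the lemma requires $\mu^-\leq\essinf_{\mathcal{Q}_\rho}u$, while $u_*(x_o,t_o)\geq\essinf_{\mathcal{Q}_\rho}u$ for every $\rho$. A clean fix is to fix $\varepsilon\in(0,aM)$ and take $\mu^-=u_*(x_o,t_o)-\varepsilon$. Because $\essinf_{\mathcal{Q}_\rho}u\uparrow u_*(x_o,t_o)$ as $\rho\downarrow 0$, the constraint $\mu^-\leq\essinf_{\mathcal{Q}_\rho}u$ holds for all small $\rho$; the parameters $a,M,\mu^-$ are now fixed independently of $\rho$, so $\nu^-$ is a genuine constant; and since $u(x_o,t_o)>u_*(x_o,t_o)+M>\mu^-+M$, the Lebesgue-point argument still drives the density of $\{u\leq\mu^-+M\}$ in $\mathcal{Q}_\rho$ to zero. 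The lemma then yields $\essinf_{\mathcal{Q}_{\rho/2}}u\geq\mu^-+aM=u_*(x_o,t_o)+(aM-\varepsilon)$, and letting $\rho\to 0$ contradicts the definition of $u_*$. (Alternatively one may take $\mu^-=\essinf_{\mathcal{Q}_\rho}u$ and observe that, since this converges to the fixed number $u_*(x_o,t_o)$, the constant $\nu^-$ stays bounded away from zero as $\rho\to 0$.) With these corrections your argument is complete.
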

\noindent Naturally $u$ also has an upper-semicontinous representative.
The corollary implies that the corresponding semicontinuous representatives of $u$ actually satisfy the pointwise bound \eqref{est: measure-above} or \eqref{est: measure-below} for every point in the intrinsic half-cylinder.

\vskip0.2cm \noindent

\begin{proof}[Proof of Lemma \ref{lem:DG-type}]
We prove \eqref{est: measure-above}, the other case being similar. The only careful passage is that to have \eqref{est: measure-above} it is only needed that the function $u$ is bounded from below, whereas in order to prove \eqref{est: measure-below} the function must be bounded from above. In our case $u \in \A(p_i,\alpha, \Omega \times (-T,T))$ and therefore by Theorem \ref{theo:local_bdd_large_bar_p} and Lemma \ref{lem: bdd-subcritical} it is locally bounded.
\vskip0.1cm

\noindent For $n \in \N_0$ we define the following sequences of real numbers and sets:
\begin{align*}
\rho_n &:= \frac\rho2 (1+2^{-n}), \quad k_n := \mu^- + aM +(1-a)2^{-n}M, 
\\
Q_n &:= \mathcal{K}_n \times \mathcal{T}_n = \prod_{s=1}^N (-\rho^\frac{1}{p_s}_n, \rho^\frac{1}{p_s}_n) \times (-\rho_n,\rho_n), \quad A_n := \{ u < k_n\} \cap Q_n.
\end{align*}
Choose functions $\eta_n^s \in C^\infty_0((-\rho^\frac{1}{p_s}_n,\rho^\frac{1}{p_s}_n);[0,1])$ such that $\eta_n = 1$ on $(-\rho^\frac{1}{p_s}_{n+1}, \rho^\frac{1}{p_s}_{n+1})$ and $\varphi_n \in C^\infty((-\rho_n,\rho_n);[0,1])$ such that $\varphi_n = 1$ on $(-\rho_{n+1},\rho_{n+1})$ and 
\begin{align*}
 |{\eta^s_n}'| \leq c 2^n \rho^{-\frac{1}{p_s}}, \quad |\varphi_n'| \leq c 2^n \rho^{-1}.
\end{align*}
Combining the energy estimate \eqref{est:energy} with points (i) and (ii) of Lemma \ref{estimates:boundary_terms} we see that 
\begin{align*}
&\sum^N_{j=1} \iint_{Q_n} |\partial_j [(u-k_n)_- \eta_n]|^{p_j} \varphi_n \d x \d t + \sup_{\tau\in \mathcal{T}_n}\int_{\mathcal{K}_n \times \{\tau\}} (|u| + |k_n|)^{\alpha-1}(u-k_n)_-^2 \eta_n \varphi_n \d x
\\
\notag &\leq \sum^N_{j=1} c\iint_{Q_n} (u-k_n)_-^{p_j}|\partial_j \eta^\frac{1}{p_j}|^{p_j} \varphi\d x\d t + c\iint_{Q_n} (|u| + |k_n|)^{\alpha-1}(u-k_n)_-^2 \eta_n (\partial_t \varphi_n)_+\d x \d t
\\
&:= I_1 + I_2,
\end{align*}
where $\eta_n$ is defined using the functions $\eta^s_n$ as in \eqref{expr:eta}. The term $I_1$ can be estimated as
\begin{align*}
 I_1 \leq c\sum^N_{j=1} M^{p_j}\norm{{\eta^j_n}'}_\infty^{p_j}|A_n| \leq c 2^{p_N n} \rho^{-1}|A_n| \sum^N_{j=1}M^{p_j}.
\end{align*}
Note now that 
\begin{align*}
 (u-K_n)_- \leq |u - k_n| \leq |u| + |k_n|,
\end{align*}
and since also $\alpha < 1 $ we have
\begin{align*}
 (|u| + |k_n|)^{\alpha-1} \leq (u - k_n)_-^{\alpha - 1}.
\end{align*}
This observation allows us to estimate $I_2$ as 
\begin{align*}
 I_2 \leq c \iint_{Q_n} (u-k_n)_-^{\alpha+1} \eta_n (\partial_t \varphi_n)_+\d x \d t \leq c M^{\alpha+1} 2^n \rho^{-1} |A_n|.
\end{align*}
Noting that 
\begin{align*}
 |u| + |k_n| \leq 2 L :=2 \max\{|\mu^-|, |\mu^- + M|\},
\end{align*}
we can combine the energy estimate with the estimates for $I_1$ and $I_2$ to conclude that 
\begin{align}\label{est:energy-consequence}
 \sum^N_{j=1} &\iint_{Q_n} |\partial_j [(u-k_n)_- \eta_n]|^{p_j} \varphi_n \d x \d t + (2L)^{\alpha-1} \sup_{\tau\in \mathcal{T}_n}\int_{\mathcal{K}_n \times \{\tau\}} (u-k_n)_-^2 \eta_n \varphi_n \d x
\\
\notag &\leq c 2^{p_N n}\rho^{-1}|A_n|\Big( \sum^N_{j=1}M^{p_j} + M^{\alpha+1}\Big).
\end{align}
By the definitions of the sequences and sets and H\"older's inequality see that
\begin{align}\label{est:A_nplusone-EINS}
 (1-a)M 2^{-(n+1)} |A_{n+1}| &= ( k_n - k_{n+1})|A_{n+1}| 
 \\
 \notag &\leq \iint_{A_{n+1}} (u - k_n)_- \d x \d t 
 \\
\notag  &\leq \Big[ \iint_{A_{n+1}} (u - k_n)_-^{\bar p(1+\frac{2}{N})} \d x \d t \Big]^\frac{N}{\bar p (N+2)} |A_{n+1}|^{1-\frac{N}{\bar p(N+2)}}.
\end{align}
The integral in the last expression can be estimated using H\"older's inequality, the anisotropic Sobolev inequality \eqref{est:Troisi-application} and \eqref{est:energy-consequence} as
\begin{align}\label{blbr}
 \notag \iint_{A_{n+1}} (u - k_n)_-^{\bar p \frac{2}{N} + \bar p} & \d x \d t \leq \int_{\mathcal{T}_{n+1}} \Big[ \int_{\mathcal{K}_{n+1}} (u-k_n)_-^2\d x \Big]^\frac{\bar p}{N} \Big[ \int_{\mathcal{K}_{n+1}} (u - k_n)_-^{\bar p^*} \d x \Big]^\frac{\bar p}{\bar p^*} \d t
 \\
 & \leq \Big[ \sup_{\mathcal{T}_{n+1}} \int_{\mathcal{K}_{n+1}} (u-k_n)_-^2\d x  \Big]^\frac{\bar p}{N} \int_{\mathcal{T}_{n+1}} \Big[ \int_{\mathcal{K}_{n+1}} (u - k_n)_-^{\bar p^*} \d x \Big]^\frac{\bar p}{\bar p^*} \d t
 \\
 \notag &\leq  \Big[ \sup_{\mathcal{T}_{n}} \int_{\mathcal{K}_{n}} (u-k_n)_-^2\eta_n \varphi_n \d x  \Big]^\frac{\bar p}{N} \int_{\mathcal{T}_{n}} \Big[ \int_{\mathcal{K}_{n}} [(u - k_n)_- \eta_n ]^{\bar p^*} \d x \Big]^\frac{\bar p}{\bar p^*} \varphi_n\d t
  \\
 \notag &\leq c \Big[ \sup_{\mathcal{T}_{n}} \int_{\mathcal{K}_{n}} (u-k_n)_-^2\eta_n \varphi_n \d x  \Big]^\frac{\bar p}{N} \sum^N_{j=1}  \iint_{Q_n}  |\partial_j[(u - k_n)_- \eta_n]|^{p_j} \varphi_n \d x \d t
\\
\notag &\leq c (2L)^{(1-\alpha)\frac{\bar p}{N}}\Big[ 2^{p_N n}\rho^{-1}|A_n|\Big( \sum^N_{j=1}M^{p_j} + M^{\alpha+1}\Big) \Big]^{1+ \frac{\bar p}{N} }.
\end{align}
Combining \eqref{est:A_nplusone-EINS} and \eqref{blbr} we end up with
\begin{align*}
 |A_{n+1}| \leq c (1-a)^{-1} L^\frac{1-\alpha}{N+2} \rho^{-\frac{(N+ \bar p)}{\bar p(N+2)}} M^{-1} \Big( \sum^N_{j=1}M^{p_j} + M^{\alpha+1}\Big)^{\frac{N+ \bar p}{\bar p(N+2)}} b^n |A_n|^{1+\frac{1}{N+2}},
\end{align*}
where $c$ and $b$ only depends on the data. Noting that $|Q_n|\sim \rho^\frac{N+\bar p}{\bar p}$ we see that 
\begin{align*}
 Y_{n+1} := \frac{|A_{n+1}|}{|Q_{n+1}|} \leq c (1-a)^{-1} L^\frac{1-\alpha}{N+2}  M^{-1} \Big( \sum^N_{j=1}M^{p_j} + M^{\alpha+1}\Big)^{\frac{N+ \bar p}{\bar p(N+2)}} b^n Y_n^{1+\frac{1}{N+2}}.
\end{align*}
Thus, we can use Lemma \ref{lem:fastconvg} to conclude that $Y_n \to 0$ and that $u\geq \mu^- + a M$ a.e. in $\mathcal{Q}_{\rho/2}$ provided that 
\begin{align*}
 \frac{|\{u<\mu^- + M\} \cap \mathcal{Q}_\rho|}{|\mathcal{Q}_\rho|} = Y_0 &\leq c (1-a)^{N+2} L^{\alpha-1} M^{N+2} \Big( \sum^N_{j=1}M^{p_j} + M^{\alpha+1}\Big)^{-\frac{(N+ \bar p)}{\bar p}} 
 \\
 &=: \nu(a, \mu^-, M),
\end{align*}
where $c$ in the second last expression depends only on the data, and since $L$ only depends on $\mu^-$ and $M$, we see that $\nu$ has the right parameter dependence.
\end{proof}

\section{Properties of solutions to the Cauchy problem}
\label{sec: Cauchy}

In this section we prove the global boundedness of solutions to the boundary value problem
\begin{align}\label{prob:cauchy}
 \left\{
\begin{array}{ll}
\partial_t \big( |u|^{\alpha -1} u\big)  - \nabla\cdot A(x,t,u,\nabla u) = 0, & \quad \text{in } S_T:=\R^N \times (0,T), 
\\[5pt]
 u(x,0) = u_0(x),  & \quad x \in \R^N.
\end{array}
\right.
\end{align}
In order to make this precise, we present the exact definition
\begin{defin}\label{def:Lp-integrable-sol}
Let $u_0 \in L^{1+\alpha}_\textnormal{loc}(\R^N)$. We say that $u \in L^{\bf p}(0,T; W^{1, {\bf p}}_\textrm{loc}(\R^N)) \cap L^{P}(0,T; L^P_\textrm{loc}(\R^N))$ is an $L^{\bf p}$-integrable weak solution to the problem \eqref{prob:cauchy} if $u\in \cap_{i=1}^N L^{p_i}(S_T)$, $u$ is a weak solution on $S_T$ in the sense of Definition \ref{def:weaksol} and $|u|^{\alpha-1}u(\cdot,t)\rightarrow |u_0|^{\alpha-1}u_0$ in $L^{1+\frac{1}{\alpha}}_\textnormal{loc}(\R^N)$ as $t\to 0$. 
\end{defin}
Note that by Lemma \ref{lem:time-cont}, $|u|^{\alpha-1}u(\cdot, t)$ is continuous on $[0,T]$ into $L^{1+\frac{1}{\alpha}}_\textnormal{loc}(\R^N)$, so the limit of $|u|^{\alpha-1}u(\cdot,t)$ as $t\to 0$ exists for any weak solution. We can now formulate the main theorem of this section. We remark that the lower bound for $\bar p_{\alpha+1}$ in \eqref{item:global_bddd_1} is in fact equivalent to the condition \eqref{bar-p_lower_bnd}.
\begin{theo}\label{thm:global_bddness}
 Let $u_0\in L^{\alpha+1}(\R^N)$ and suppose that $u$ is an $L^{\bf p}$-integrable weak solution to \eqref{prob:cauchy} in the sense of Definition \ref{def:Lp-integrable-sol}. 
 \begin{enumerate}
  \item\label{item:global_bddd_1} If $\bar p_{\alpha+1} > \alpha+1$, then for all $\theta\in(0,T)$ and $q \in [\alpha+1, \bar p_{\alpha+1}]$ we have
 \begin{align}\label{est:sup-Lq}
  \esssup_{\R^N\times[\theta,T]} u \leq c \theta^{-\frac{(N+\bar p)}{\lambda_q}}\Big( \int^T_{\theta/2}\int_{\R^N} u_+^q\d x \d t \Big)^\frac{\bar p}{\lambda_q},
 \end{align}
 where $\lambda_q:= N(\bar p - (\alpha+1)) + q \bar p > 0$, and $c$ only depends on the data. An analogous estimate holds for the essential infimum, with $u_-$ appearing on the right-hand side.
 
 \item\label{item:global_bddd_2} If furthermore $\bar p_1 > \alpha+1$ then 
 \begin{align}\label{est:LinfL1}
 \norm{u(\cdot, \tau)}_{L^\infty(\R^N)} \leq c \tau^{-\frac{N}{\lambda_1}} \Big( \int_{\R^N} |u_0| \d x \Big)^\frac{\bar p}{\lambda_1},
 \end{align}
for all $\tau \in (0,T)$. Here we are considering the representative of $u$ for which $|u|^{\alpha-1}u$ is cotinuous with respect to time.
 \end{enumerate}

\end{theo}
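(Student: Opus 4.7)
The plan is to run a De~Giorgi iteration whose space domain is all of $\R^N$ and whose time domain shrinks onto $[\theta, T]$; since the spatial cutoff $\eta$ in the energy estimate \eqref{est:energy} can be taken identically equal to $1$, the only cost on the right-hand side comes from the time derivative of a temporal cutoff. For part~(1) with the anchor case $q = \alpha + 1$, fix a target level $k > 0$ and consider the sequences $k_j := k(1 - 2^{-j})^{2/(\alpha+1)}$ and $\theta_j := \theta(1 - 2^{-(j+1)}) \nearrow \theta$. The exponent $2/(\alpha+1)$ is chosen so that the telescoping $k_{j+1}^{(\alpha+1)/2} - k_j^{(\alpha+1)/2} = k^{(\alpha+1)/2} 2^{-(j+1)}$ is cleanly geometric, matching the quantity $\phi_j := (u^{(\alpha+1)/2} - k_j^{(\alpha+1)/2})_+$ produced by the doubly nonlinear time derivative. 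Set $Y_j := \iint_{[\theta_j, T] \times \R^N} \phi_j^2\, \d x\, \d t$, finite thanks to Lemma~\ref{estimates:boundary_terms}(iii) and the global integrability built into Definition~\ref{def:Lp-integrable-sol}.

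Plugging $\eta \equiv 1$ and a temporal cutoff $\varphi_j$ with $\varphi_j = 1$ on $[\theta_{j+1}, T]$ and $|\varphi_j'| \leq C 2^j /\theta$ into \eqref{est:energy} yields
\[
\sup_{\tau \in [\theta_{j+1}, T]} \int_{\R^N} \phi_{j+1}^2(\cdot, \tau)\, \d x + \sum_{i=1}^N \iint_{[\theta_{j+1}, T] \times \R^N} |\partial_i (u-k_{j+1})_+|^{p_i}\, \d x\, \d t \leq \tfrac{C 2^j}{\theta} Y_j.
\]
Following the algebra of the proof of Theorem~\ref{theo:local_bdd_large_bar_p} (one-slice H\"older with conjugate exponents $N/\bar p$ and $N/(N-\bar p)$, the pointwise bound $\phi_{j+1}^{2 \bar p^*/(\alpha+1)} \leq (u-k_{j+1})_+^{\bar p^*}$, and the Sobolev--Troisi inequality \eqref{est:Troisi-application} applied on $\R^N$ via a density argument justified by the global $L^{p_i}$-integrability) produces
\[
\iint_{[\theta_{j+1}, T] \times \R^N} \phi_{j+1}^{2 \bar p_{\alpha+1}/(\alpha+1)}\, \d x\, \d t \leq C (2^j/\theta)^{1 + \bar p/N} Y_j^{1 + \bar p/N}.
\]
A Chebyshev bound $|\{u > k_{j+1}\} \cap ([\theta_j, T] \times \R^N)| \leq C 4^j k^{-(\alpha+1)} Y_j$ combined with a H\"older split with exponents $\bar p_{\alpha+1}/(\alpha+1)$ and its conjugate then produces the recursion $Y_{j+1} \leq C b^j \theta^{-A} k^{-B} Y_j^{1+\delta}$, and Lemma~\ref{lem:fastconvg} forces $Y_j \to 0$ precisely when $k \geq c\, \theta^{-A/\delta} Y_0^{\delta/B \cdot 1/\delta}$. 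The algebraic simplifications $\bar p_{\alpha+1} - (\alpha+1) = \lambda_{\alpha+1}/N$ produce $A/B = (N+\bar p)/\lambda_{\alpha+1}$ and $\delta/B = \bar p/\lambda_{\alpha+1}$, so the admissible threshold reduces to $k \geq c\,\theta^{-(N+\bar p)/\lambda_{\alpha+1}} Y_0^{\bar p/\lambda_{\alpha+1}}$, which is \eqref{est:sup-Lq} for $q = \alpha+1$ since $Y_0 \leq \int_{\theta/2}^T \int_{\R^N} u_+^{\alpha+1}\, \d x\, \d t$. For general $q \in (\alpha+1, \bar p_{\alpha+1}]$ the same scheme runs with Theorem~\ref{PAS} invoked at $\sigma = q$ in place of $\sigma = \alpha+1$, and the bookkeeping produces $\lambda_q$ in place of $\lambda_{\alpha+1}$.

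For part~(2), the plan is to bootstrap the anchor smoothing estimate from $L^{\alpha+1}$-data to $L^1$-data by a Moser-type iteration fed by the mass non-increase $\|u(\cdot, t)\|_{L^1(\R^N)} \leq \|u_0\|_{L^1(\R^N)}$ established separately in Section~\ref{sec: Cauchy}. Setting $M(\tau) := \|u(\cdot, \tau)\|_{L^\infty(\R^N)}$ and applying part~(1) with $q = \alpha+1$ on the slab $[\tau/2, \tau]$, together with the non-increase of $t \mapsto \|u(\cdot, t)\|_{L^{\alpha+1}}$ (likewise proved in the same section) to replace the integrated norm by a pointwise one, yields $M(\tau) \leq C\, \tau^{-N/\lambda_{\alpha+1}} \|u(\cdot, \tau/4)\|_{L^{\alpha+1}}^{(\alpha+1) \bar p/\lambda_{\alpha+1}}$. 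Interpolating $\|u\|_{L^{\alpha+1}}^{\alpha+1} \leq \|u\|_{L^\infty}^\alpha \|u\|_{L^1}$ and inserting mass non-increase produces the self-improving recursion $M(\tau) \leq C\, \tau^{-N/\lambda_{\alpha+1}} M(\tau/4)^{\theta_*} \|u_0\|_{L^1}^{\bar p/\lambda_{\alpha+1}}$ with $\theta_* := \alpha \bar p/\lambda_{\alpha+1}$. The hypothesis $\bar p_1 > \alpha+1$ is exactly $\lambda_1 > 0$, which in turn is exactly $\theta_* \in (0,1)$; the change of variable $\Phi(\tau) := \tau^{N/\lambda_1} M(\tau)$ (with $\gamma = N/\lambda_1$ chosen so that $\gamma(1-\theta_*) = N/\lambda_{\alpha+1}$) converts the recursion into $\Phi(\tau) \leq C'' \|u_0\|_{L^1}^{\bar p/\lambda_{\alpha+1}} \Phi(\tau/4)^{\theta_*}$, and iterating, using the crude part~(1) bound to verify $\Phi(\tau/4^n)^{\theta_*^n} \to 1$, yields \eqref{est:LinfL1} after summing the geometric series $\sum \theta_*^n = 1/(1-\theta_*)$ and using $\lambda_{\alpha+1}(1-\theta_*) = \lambda_1$.

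The main obstacle throughout is the $(\alpha+1)$-inhomogeneity between the two natural quantities in the doubly nonlinear energy: $\phi^2 = (u^{(\alpha+1)/2}-k^{(\alpha+1)/2})_+^2$ from the time derivative and $(u-k)_+^{\alpha+1}$ from the Sobolev--Troisi side. These are equivalent up to universal constants only on $\{u > 2k\}$; closer to $k$, a factor depending on $k$ enters, and this is precisely why the level sequence $k_j = k(1-2^{-j})^{2/(\alpha+1)}$ is chosen as it is, and why the Chebyshev super-level-set bound must absorb the residual $k^{-(\alpha+1)}$ factor in the recursion. A secondary technical matter is justifying the Sobolev--Troisi inequality on all of $\R^N$ rather than on a bounded rectangle: this relies on approximating the truncation $(u-k_{j+1})_+$, whose support has finite measure for $k_{j+1} > 0$ by Chebyshev, by compactly supported functions and passing to the limit using the global $L^{p_i}$-integrability of the gradients.
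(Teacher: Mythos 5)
Your anchor case $q=\alpha+1$ in part (1) and your Moser-type bootstrap for part (2) both match the paper's route closely: the paper also runs a De~Giorgi iteration in time with $\eta$ sent to $1$, then a dyadic iteration in $\theta$ (your change of variable $\Phi(\tau)=\tau^{N/\lambda_1}M(\tau)$ is just a cleaner way of summing the same geometric series they sum). But there is a genuine gap in your treatment of part (1) for general $q\in(\alpha+1,\bar p_{\alpha+1}]$. You propose to ``run the same scheme with Theorem~\ref{PAS} invoked at $\sigma=q$,'' but $\sigma$ in Theorem~\ref{PAS} is the exponent governing the sup-in-time term, and the energy estimate \eqref{est:energy} only controls $\sup_t\int\big(u^{(\alpha+1)/2}-k^{(\alpha+1)/2}\big)_+^2\d x$, which pins $\sigma$ to $\alpha+1$. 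There is no uniform-in-time control of $\int (u-k)_+^q\d x$ for $q>\alpha+1$ available from the energy estimate, so the embedding cannot be reindexed by $q$ in the way you suggest. The paper's actual mechanism is different: it keeps the slice-level H\"older with $\sigma=\alpha+1$ fixed (always producing the exponent $2\bar p_{\alpha+1}/(\alpha+1)$ on $\phi$), defines $Y_j:=\iint_{S_j}\phi_j^{2q/(\alpha+1)}$, and inserts a second, global H\"older with conjugate exponents $\bar p_{\alpha+1}/q$ and its dual, using the Chebyshev bound on $|S_j\cap\{u>k_{j+1}\}|$ to close. This route also requires a separate verification that $Y_0<\infty$ for $q>\alpha+1$, which the paper gets by first extracting $u_+\in L^{\bar p_{\alpha+1}}(S_0)$ from the $k=0$ iteration and interpolating against the $L^{\alpha+1}$ bound of Lemma~\ref{lem:Lalpha+1_Lalpha+1}.

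A smaller omission: in part (2) you set $M(\tau):=\|u(\cdot,\tau)\|_{L^\infty(\R^N)}$ as a slice-in-time quantity, but what your iteration (and part (1)) actually controls is the essential supremum over a slab $\R^N\times[\tfrac34\tau,\tau]$, which does not immediately dominate the sup on the null set $\{t=\tau\}$. The paper closes this by a contradiction argument using the established time-continuity of $t\mapsto|u|^{\alpha-1}u(\cdot,t)$ into $L^{1+1/\alpha}_{\mathrm{loc}}$, together with Chebyshev on a set of positive measure where $|u(\cdot,\tau)|$ would allegedly exceed the bound. You should include this step, since without it the representative being used at time $\tau$ is not pinned down. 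Note also that since part (2) only needs $q=\alpha+1$, your bootstrap itself is complete once you have the anchor case; the gap in part (1) for $q>\alpha+1$ is a gap in proving that part of the theorem statement, not in proving part (2).
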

Note that in order to obtain \eqref{item:global_bddd_1} in the previous theorem we are forced to assume global $L^{\alpha+1}$-integrability of the initial data $u_0$, although only local $L^{\alpha+1}$-integrability was needed to formulate the definition. We also assume that $u$ is integrable to all exponents $p_j$ on all of $S_T$ rather than just locally. By examining the proof below we will see that this assumption could be somewhat relaxed.

\skip0.5cm 

\noindent In order to proceed we first prove the following lemma which shows that the integrability properties of $u_0$ imply that also $u$ has stronger integrability than which is apparent from the definition. 

\begin{lem}\label{lem:Lalpha+1_Lalpha+1}
 Let $u$ be an $L^{\bf p}$-integrable weak solution to the Cauchy problem in the sense of Definition \ref{def:Lp-integrable-sol} with $u_0\in L^{\alpha+1}(\R^N)$. Then $u\in L^\infty(0,T; L^{\alpha+1}(\R^N))$ and 
 \begin{align}\label{est:Lalfaplusone-global}
  \norm{u(\cdot, t)}_{L^{\alpha+1}(\R^N)}\leq \norm{u_0}_{L^{\alpha+1}(\R^N)}, \hspace{7mm} t \in [0,T].
 \end{align}
\end{lem}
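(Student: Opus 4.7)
The plan is to recover \eqref{est:Lalfaplusone-global} by running the energy identity of Lemma \ref{lem:general-formula} with the natural choice $f(s) = s$, which amounts to testing the equation with $u$ itself. Since $f'\equiv 1$, condition \eqref{Assunzione} holds vacuously; the function $g$ determined by $f(s)=g(|s|^{\alpha-1}s)$ is $g(\sigma) = |\sigma|^{1/\alpha - 1}\sigma$, and a convenient primitive is $G(\sigma) = \frac{\alpha}{\alpha+1}|\sigma|^{(\alpha+1)/\alpha}$, so that $G(|u|^{\alpha-1}u) = \frac{\alpha}{\alpha+1}|u|^{\alpha+1}$. This packages the parabolic terms on both sides of \eqref{general-formula} into exactly the quantity we want to control.

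Next I would exhaust $\R^N$ by product cutoffs adapted to \eqref{expr:eta}. Fix a standard $\rho \in C^\infty_o(\R;[0,1])$ with $\rho\equiv 1$ on $[-1,1]$, $\rho\equiv 0$ outside $[-2,2]$, and set $\eta_R(x) := \prod_{i=1}^N \rho(x_i/R)^{p_i}$. Taking $\varphi \equiv 1$, $\tau_1 = 0$ (so that $G(|u|^{\alpha-1}u)(\cdot, 0) = \frac{\alpha}{\alpha+1}|u_0|^{\alpha+1}$ by Definition \ref{def:Lp-integrable-sol} combined with Theorem \ref{cont_into_Lbetaplusone}) and $\tau_2 = t$, the inequality \eqref{general-formula}, after discarding the nonnegative gradient term on the left, reduces to
\begin{equation*}
\int_{\R^N}\eta_R\,|u(x,t)|^{\alpha+1}\,dx \;\leq\; c\sum_{i=1}^N R^{-p_i}\iint_{S_T}|u|^{p_i}\,dx\,dt \;+\; \int_{\R^N}\eta_R\,|u_0|^{\alpha+1}\,dx,
\end{equation*}
where the first term arises from the routine bound $|\partial_i \eta_R^{1/p_i}|^{p_i} \leq c R^{-p_i}$, supported where some $|x_i| \in [R, 2R]$.

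The crux is the passage $R \to \infty$, and this is where the global hypothesis $u \in \bigcap_i L^{p_i}(S_T)$ from Definition \ref{def:Lp-integrable-sol} is used decisively: each summand in the cutoff error behaves like $R^{-p_i}\|u\|_{L^{p_i}(S_T)}^{p_i} \to 0$ since $p_i > 1$. Dominated convergence converts the initial-datum integral into $\|u_0\|_{L^{\alpha+1}(\R^N)}^{\alpha+1}$, and Fatou's lemma applied on the left delivers $\|u(\cdot,t)\|_{L^{\alpha+1}(\R^N)}^{\alpha+1} \leq \|u_0\|_{L^{\alpha+1}(\R^N)}^{\alpha+1}$ for every $t \in [0,T]$, which both proves \eqref{est:Lalfaplusone-global} and yields $u \in L^\infty(0,T; L^{\alpha+1}(\R^N))$. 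The only subtle point is this vanishing of the cutoff error; it is what forces the assumption of global (rather than merely local) $L^{p_i}$-integrability built into Definition \ref{def:Lp-integrable-sol}.
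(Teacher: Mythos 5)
Your argument is correct and follows essentially the same route as the paper: both apply Lemma \ref{lem:general-formula} with $\varphi\equiv 1$, $\tau_1=0$, exhaust $\R^N$ by product cutoffs of the form \eqref{expr:eta}, and let the cutoff error vanish using the global $L^{p_i}(S_T)$-integrability. The only (harmless) deviations are that you take the odd function $f(s)=s$ in one stroke where the paper treats $u_+$ and $u_-$ separately via $f(s)=s_+$ and its negative counterpart, and that you scale the cutoff isotropically (giving errors $R^{-p_i}\|u\|_{L^{p_i}}^{p_i}$) while the paper uses the anisotropic boxes $K_r$ with sides $r^{1/p_i}$ (giving $r^{-1}\sum_i\|u\|_{L^{p_i}}^{p_i}$).
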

\begin{proof}[Proof]
Let $\xi \in C^\infty_o(\R;[0,1])$ be such that $\xi=1$ on $[-1,1]$ and $\xi = 0$ on $\R\setminus (-2,2)$. Define $\eta_j(s) = \xi(s/r^\frac{1}{p_j})$ and let $\eta\in C^\infty_o(\R^N)$ be defied according to \eqref{expr:eta}. Then $\eta=1$ on $K_r$ and $\eta=0$ outside of $K_{2r}$. Let $\varphi \equiv 1$. Then  \eqref{general-formula} with $\tau_1 = 0$, $\tau_2 = \tau$, $f(s) = s_+$ and 
\begin{align*}
 G(\tau) = \int^\tau_0 g(s) \d s,
\end{align*}
imply that 
\begin{align}\label{est:u_to_alpha_plusone_positive}
 \int_{\R^N \times\{\tau\}} u_+^{\alpha+1}\eta\d x \leq c \sum^N_{i=1} \iint_{S_T} u_+^{p_i}|\partial_i \eta^\frac{1}{p_i}|^{p_i} \d x \d t + \int_{\R^N} (u_0)_+^{\alpha+1} \eta \d x,
\end{align}
for all $\tau \in [0,T]$.  Note that we were able to omit the first term from the left-hand side of \eqref{general-formula} since it is nonnegative. Also, the third term on the right-hand side of \eqref{general-formula} vanishes since $\varphi$ is constant.

 Using the definition of $\eta$ we get an upper bound for the magnitude of the partial derivatives on the right-hand side of \eqref{est:u_to_alpha_plusone_positive}. Taking into account also the fact that $\eta = 1$ on $K_r$ we end up with
\begin{align*}
 \int_{K_r \times\{\tau\}} u_+^{\alpha+1} \d x \leq c r^{-1}\sum^N_{i=1} \iint_{S_T} u_+^{p_i} \d x \d t + \int_{\R^N} (u_0)_+^{\alpha+1} \d x,
\end{align*}
for all $\tau \in [0,T]$. Finally, taking the limit $r\to \infty$, the sum on the right-hand side vanishes since $u\in L^{p_i}(S_T)$ for every $i\in \{1,\dots,N\}$, and we obtain an estimate like \eqref{est:Lalfaplusone-global} with $u_+$ replacing $u$ and $(u_0)_+$ instead of $u_0$. The negative part $u_-$ can be treated in an analogous way and together these estimates imply \eqref{est:Lalfaplusone-global}.
\end{proof}

\vskip0.1cm \noindent In order to prove \eqref{est:LinfL1} in Theorem \ref{thm:global_bddness} we need the following lemma.
\begin{lem}\label{lem:L1L1} Let $u$ be an $L^{\bf p}$-integrable weak solution to the Cauchy problem \eqref{prob:cauchy} in the sense of Definition \ref{def:Lp-integrable-sol} with initial value $u_0$. Then 
\begin{align*}
 \norm{u(\cdot,t)}_{L^1(\R^N)}\leq \norm{u_0}_{L^1(\R^N)}
\end{align*}
for all $t\in [0,T)$. 

\end{lem}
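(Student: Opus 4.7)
The plan is to apply the general energy identity of Lemma~\ref{lem:general-formula} to a Lipschitz approximation of the function $f(u)=\alpha^{-1}|u|^{1-\alpha}\sgn(u)$, whose associated primitive satisfies $G(|u|^{\alpha-1}u)=|u|$. Heuristically, this test would yield $\partial_t\int|u|\eta\le(\text{boundary terms in }\nabla\eta)$, the interior dissipation being nonnegative by the structure condition \eqref{cond:structure1}, and the claim would follow by sending $\eta\nearrow 1$. Since $f$ is only H\"older continuous at the origin, the rigorous argument requires a family of Lipschitz regularizations $f_\delta$ coupled with the spatial cutoff scale $r$.

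For $\delta>0$ I set
$$f_\delta(u):=\begin{cases}\alpha^{-1}|u|^{1-\alpha}\sgn(u)&|u|\ge\delta,\\ \alpha^{-1}\delta^{-\alpha}\,u&|u|<\delta,\end{cases}$$
which is odd, strictly increasing, Lipschitz and piecewise $C^1$; since $f_\delta'>0$ everywhere, the admissibility condition \eqref{Assunzione} holds vacuously. A direct computation gives
$$G_\delta(|u|^{\alpha-1}u)=\begin{cases}|u|^{\alpha+1}/((\alpha+1)\delta^\alpha)&|u|<\delta,\\ |u|-\alpha\delta/(\alpha+1)&|u|\ge\delta,\end{cases}$$
increasing monotonically to $|u|$ as $\delta\searrow 0$. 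Splitting into the two regimes and using $|u|^{-\alpha}\le\delta^{-\alpha}$ in the upper branch yields the uniform bound
$$|f_\delta(u)|^{p_i}\,f_\delta'(u)^{1-p_i}\le c_{\alpha,p_i}\,\delta^{-\alpha}\,|u|^{p_i}.$$

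Fixing $\tau\in(0,T)$ and choosing $\eta$ of the form \eqref{expr:eta} with $\eta\equiv 1$ on $K_r$, $\supp\eta\subset K_{2r}$ and $|\partial_i\eta^{1/p_i}|^{p_i}\le c/r$, I apply Lemma~\ref{lem:general-formula} with $\varphi\equiv 1$, $\tau_1=0$, $\tau_2=\tau$, drop the nonnegative diffusion term on the left, and insert the uniform bound on the right to obtain
$$\int_{\R^N}\eta\,G_\delta(|u|^{\alpha-1}u(\cdot,\tau))\,\d x\le\frac{c}{\delta^{\alpha}r}\,\omega(r)+\int_{\R^N}\eta\,G_\delta(|u_0|^{\alpha-1}u_0)\,\d x,$$
where $\omega(r):=\sum_{i=1}^N\iint_{(K_{2r}\setminus K_r)\times[0,\tau]}|u|^{p_i}\,\d x\,\d t\to 0$ as $r\to\infty$ by the global integrability $u\in L^{p_i}(S_T)$.

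To conclude I couple $\delta$ to $r$, e.g.\ $\delta(r):=\max\{\omega(r)^{1/(2\alpha)},r^{-1/(2\alpha)}\}$, so that $\delta(r)\to 0$ and $\delta(r)^{-\alpha}\omega(r)/r\to 0$ simultaneously. The gradient term then vanishes, while the initial-datum term tends to $\|u_0\|_{L^1(\R^N)}$ by dominated convergence via the bound $G_\delta(|u_0|^{\alpha-1}u_0)\le|u_0|\in L^1(\R^N)$. On the left, $\eta\,G_{\delta(r)}(|u|^{\alpha-1}u(\cdot,\tau))\to|u(\cdot,\tau)|$ pointwise and is nonnegative, so Fatou's lemma delivers $\|u(\cdot,\tau)\|_{L^1(\R^N)}\le\|u_0\|_{L^1(\R^N)}$. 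The main obstacle is precisely this coupling: the uniform bound on the Lipschitz approximation carries an unfavorable $\delta^{-\alpha}$ prefactor, so the two limits cannot be taken independently, and one must balance the decay of the tail $\omega(r)$ against the blow-up of $\delta^{-\alpha}$ to extract the cancellation.
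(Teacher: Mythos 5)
Your proof is correct and follows essentially the same strategy as the paper: apply Lemma~\ref{lem:general-formula} with a Lipschitz approximation $f_\delta$ of the odd H\"older function $s\mapsto|s|^{1-\alpha}\sgn(s)$ (whose primitive $G_\delta(|u|^{\alpha-1}u)$ increases to a multiple of $|u|$), let the spatial cutoff $r\to\infty$ using the global $L^{p_i}(S_T)$-integrability of $u$, and then send the regularization parameter to zero. The paper uses the smooth approximation $f_\varepsilon(s)=(s^2+\varepsilon)^{-\alpha/2}s$ in place of your piecewise truncation, but this is a cosmetic difference. One small misstatement: the coupling $\delta=\delta(r)$ is not actually needed. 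For any \emph{fixed} $\delta>0$ the prefactor $\delta^{-\alpha}$ is just a constant, and the gradient term is bounded by $c\,\delta^{-\alpha}r^{-1}\sum_i\|u\|_{L^{p_i}(S_T)}^{p_i}\to 0$ as $r\to\infty$; once that limit has been taken, $\delta\to0$ poses no difficulty. This sequential argument (first $r\to\infty$, then the regularization parameter to zero) is exactly what the paper does, and there is no "unfavorable cancellation" to extract.
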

\begin{proof}[Proof]
Choose in Lemma \ref{lem:general-formula} with $\Omega=\R^N$ the functions 
\[f_{\varepsilon}(s)= (s^2+\varepsilon)^{-\alpha/2}s, \qquad g_{\varepsilon}(s)= (|s|^{2/\alpha}+\varepsilon)^{-\alpha/2}|s|^{1/\alpha-1}s,\]
and define
\[G(s)=\int_0^s g(\tau)\, \d \tau.\]

\noindent The functions $f$ clearly satisfies the assumptions of Lemma \ref{lem:general-formula} for any fixed $\varepsilon>0$.
We take $\eta$ and $\varphi$ as in Lemma \ref{lem:Lalpha+1_Lalpha+1}. Then, as in Lemma \ref{lem:Lalpha+1_Lalpha+1} we have 
\begin{align}\label{est:almost_L1}
 \int_{\R^N} \eta G(|u|^{\alpha -1}u)(x,\tau) \d x  &\leq  \gamma \sum^N_{i=1} \iint_{S_T} |f(u)|^{p_i} f'(u)^{1-p_i} |\partial_i \eta^\frac{1}{p_i}|^{p_i} \d x \d t
 \\
 \notag & \quad +  \int_{\R^N} \eta G(|u_0|^{\alpha -1}u_0) \d x.
\end{align}
By a direct calculation we see that 
\begin{align*}
 f'(u) = [(1-\alpha)u^2 + \varepsilon ] (u^2 + \varepsilon)^{-\frac\alpha2 -1} \geq (1-\alpha)(u^2 + \varepsilon)^{-\frac\alpha2},
\end{align*}
and furthermore that
\begin{equation}
\begin{aligned}\label{est:f-mess}
 |f(u)|^{p_i}f'(u)^{1-p_i} &\leq (1-\alpha)^{1-p_i} (u^2-\varepsilon)^{-\frac{\alpha}{2}}|u|^{p_i} \leq c\varepsilon^{-\frac\alpha2} |u|^{p_i}.
\end{aligned}
\end{equation}
Using \eqref{est:f-mess} and the definition of $\eta$ we obtain
\begin{align*}
 \iint_{S_T} |f(u)|^{p_k} f'(u)^{1-p_k} |\partial_k \eta^\frac{1}{p_k}|^{p_k} \d x \d t \leq c \varepsilon^{-\frac\alpha2} r^{-1}\int^T_0 \int_{K_{2r}} |u|^{p_k}\d x \d t \xrightarrow[r\to\infty]{} 0.
\end{align*}
Thus, passing to the limit $r\to \infty$ in \eqref{est:almost_L1} we end up with
\begin{align}\label{est:G-G}
 \int_{\R^N} G(|u|^{\alpha -1}u)(x,\tau_2) \d x  &\leq  \int_{\R^N} G(|u_0|^{\alpha -1}u_0) \d x.
\end{align}
Using the definition of $G$ we see that for any $\tau \in [0,T]$,
\begin{align*}
 \int_{\R^N} G(|u|^{\alpha -1}u)(x,\tau) \d x &= \int_{\R^N} \int^{|u|^{\alpha -1}u(x,\tau)}_0 (|s|^\frac{2}{\alpha} + \varepsilon)^{-\frac{\alpha}{2}}|s|^{\frac1\alpha - 1}s \d s \d x 
 \\
 &= \int_{\R^N} \int^{|u(x,\tau)|^{\alpha}}_0 (s^\frac{2}{\alpha} + \varepsilon)^{-\frac{\alpha}{2}}s^{\frac1\alpha} \d s \d x 
 \\
 \xrightarrow[\varepsilon\to 0]{} & \int_{\R^N} \int^{|u(x,\tau)|^{\alpha}}_0 s^{\frac1\alpha - 1} \d s \d x 
 \\
 &= \alpha \int_{\R^N} |u(x,\tau)|\d x.
\end{align*}
where we make use of the monotone convergence theorem in order to pass to the limit $\varepsilon\to 0$. Thus passing to the limit $\varepsilon\to 0$ in \eqref{est:G-G} and recalling that $\tau_2$ is arbitrary gives the claim.
\end{proof}

\vskip0.1cm \noindent Now we can prove the main theorem of this section.
\begin{proof}[Proof of Theorem \ref{thm:global_bddness}] We first prove \eqref{item:global_bddd_1}. 
 Let $q\in [\alpha+1, \bar p_{\alpha+1}]$. Then $\lambda_q>0$ due to the condition $\bar p_{\alpha+1} > \alpha+1$. For $j\in \N_0$ and $k>0$ we define
 \begin{align*}
  \theta_j &:= \theta(1 - 2^{-(j+1)}), \hspace{7mm} \psi_j(t) := \min \{1, 2^{j+2}\theta^{-1}(t-\theta_j)_+\}, 
  \\
  S_j &:= \R^N \times (\theta_j,T), \hspace{7mm} k_j:= k(1-2^{-j})^\frac{2}{\alpha+1}.
 \end{align*}
We define the sequence 
\begin{align*}
 Y_j : = \iint_{S_j} \big( u^\frac{\alpha+1}{2} - k_j^\frac{\alpha+1}{2}\big)_+^\frac{2 q}{\alpha + 1}\d x \d t.
\end{align*}
Letting $r>0$ and using the function $\eta$ defined in the proof of Lemma \ref{lem:Lalpha+1_Lalpha+1} we can reason as in \eqref{lcalc} and obtain
\begin{align*}
 &\int^T_{\theta_{j+1}} \int_{K_r}  \big( u^\frac{\alpha+1}{2} - k_{j+1}^\frac{\alpha+1}{2}\big)_+^{2\bar p(\frac{1}{\alpha+1}+\frac{1}{N})}\d x \d t 
 \\
 &\leq \Big[\sup_{[\theta_{j+1},T]} \int_{K_r}\big( u^\frac{\alpha+1}{2} - k_{j+1}^\frac{\alpha+1}{2}\big)_+^2\d x\Big]^\frac{\bar p}{N} \int^T_{\theta_{j+1}} \Big( \int_{K_r} (u-k_{j+1})_+^{\bar p^*}\d x\Big)^\frac{\bar p}{\bar p^*}\d t
 \\
 &\leq \Big[\sup_{[\theta_{j},T]} \int_{K_{2r}}\big( u^\frac{\alpha+1}{2} - k_{j+1}^\frac{\alpha+1}{2}\big)_+^2\psi_j\eta \d x\Big]^\frac{\bar p}{N} \int^T_{\theta_j}\Big( \int_{K_{2r}} [(u-k_{j+1})_+\eta]^{\bar p^*}\d x\Big)^\frac{\bar p}{\bar p^*}\d t
 \\
 &\leq c\Big[ \sum^N_{s=1} \int^T_{\theta_j} \int_{K_{2r}}(u-k_{j+1})_+^{p_s} |\partial_s \eta^\frac{1}{p_s}|^{p_s} \d x \d t + 2^j\theta^{-1}\int^T_{\theta_j}  \int_{K_{2r}} \big( u^\frac{\alpha+1}{2} - k_{j+1}^\frac{\alpha+1}{2}\big)_+^2 \d x \d t  \Big]^{1+\frac{\bar p}{N}}.
\end{align*}
Passing to the limit $r\to \infty$ as in the proof of Lemma \ref{lem:Lalpha+1_Lalpha+1} we are left with 
\begin{align}\label{est:high-exp}
 \iint_{S_{j+1}} \big( u^\frac{\alpha+1}{2} - k_{j+1}^\frac{\alpha+1}{2}\big)_+^{2\bar p(\frac{1}{\alpha+1}+\frac{1}{N})}\d x \d t  \leq c\Big( 2^j\theta^{-1} \iint_{S_j} \big( u^\frac{\alpha+1}{2} - k_{j+1}^\frac{\alpha+1}{2}\big)_+^2 \d x \d t  \Big)^{1+\frac{\bar p}{N}}.
\end{align}
The right-hand side is finite due to Lemma \ref{lem:Lalpha+1_Lalpha+1}. From the previous argument we can also conclude that $Y_0$ and hence every $Y_j$ is finite. Namely, it is possible to replace $k_{j+1}$ by zero, and by initially considering a smaller $\theta$ we see that \eqref{est:high-exp} implies that $u_+ \in L^{\bar p_{\alpha+1}}(S_0)$. Since also $u_+\in L^{\alpha+1}(S_0)$ by Lemma \ref{lem:Lalpha+1_Lalpha+1}, we have by interpolation that $u_+ \in L^q(S_0)$ which shows that each $Y_j$ is finite.
Since $\gamma:= \bar p_{\alpha+1}/q \geq 1$ we can use H\"older's inequality combined with \eqref{est:high-exp} to show that 
\begin{align*}
 Y_{j+1} &= \iint_{S_{j+1}} \big( u^\frac{\alpha+1}{2} - k_{j+1}^\frac{\alpha+1}{2}\big)_+^\frac{2 q}{\alpha + 1}\d x \d t 
 \\
 &\leq \Big[\iint_{S_{j+1}} \big( u^\frac{\alpha+1}{2} - k_{j+1}^\frac{\alpha+1}{2}\big)_+^{2\bar p(\frac{1}{\alpha+1}+\frac{1}{N})}\d x \d t\Big]^\frac{1}{\gamma}|S_{j+1}\cap \{u> k_{j+1}\}|^{1-\frac{1}{\gamma}}
 \\
 &\leq c\Big[2^j\theta^{-1} \iint_{S_j} \big( u^\frac{\alpha+1}{2} - k_{j+1}^\frac{\alpha+1}{2}\big)_+^2 \d x \d t \Big]^{\frac{(N+\bar p)}{\gamma N} }  |S_j\cap \{u> k_{j+1}\}|^{1-\frac{1}{\gamma}}
 \\
 &\leq c\theta^{-\frac{(N+\bar p)}{\gamma N}} 2^{j\frac{(N+\bar p)}{\gamma N}} Y_j^{\frac{(\alpha+1)}{q} \frac{(N+\bar p)}{\gamma N} } |S_j\cap \{u> k_{j+1}\}|^{(1-\frac{\alpha+1}{q})\frac{(N+\bar p)}{\gamma N} + 1-\frac{1}{\gamma}}, 
\end{align*}
where in the last step we use H\"older's inequality with the exponent $q/(\alpha+1)$. The measure of the set $S_j\cap \{u> k_{j+1}\}$ can be estimated as in \eqref{est:super_level_set}:
\begin{align*}
 |S_j\cap \{u> k_{j+1}\}| \leq c 2^{j\frac{2q}{\alpha+1}}k^{-q}Y_j.
\end{align*}
Combining the last two estimates we have
\begin{align}\label{est:recursive}
 Y_{j+1} \leq c \theta^{-\frac{(N+\bar p)}{\gamma N}}  k^{-H} b^j Y_j^{1+\delta},
\end{align}
where
\begin{align*}
 H &= q\big[ (1-\tfrac{\alpha+1}{q})\tfrac{(N+\bar p)}{\gamma N} + 1-\tfrac{1}{\gamma}\big] = \tfrac{q}{\bar p} \big( \tfrac{N(\bar p - (\alpha+1)) + q \bar p}{N+\alpha+1} \big), 
 \\
 \delta &= \tfrac{(\alpha+1)}{q} \tfrac{(N+\bar p)}{\gamma N} + (1-\tfrac{\alpha+1}{q})\tfrac{(N+\bar p)}{\gamma N} - \tfrac{1}{\gamma} = \tfrac{q}{N+\alpha+1}.
\end{align*}
In principle the constants $c,b$ appearing in \eqref{est:recursive} depend on $q$, but due to the range of $q$ we are able to pick $c,b$ that ultimately only depend on the data. By Lemma \ref{lem:fastconvg}, the sequence $(Y_j)$ converges to zero provided that 
\begin{align*}
 Y_0 \leq c \theta^{\frac{(N+\bar p)}{\delta \gamma N}}k^\frac{H}{\delta},
\end{align*}
which is true iff
\begin{align}\label{est:k-lower}
 k \geq c \theta^{-\frac{(N+\bar p)}{N(\bar p - (\alpha+1)) + q \bar p}}\Big( \int^T_{\theta/2}\int_{\R^N} u_+^q\d x \d t \Big)^\frac{\bar p}{N(\bar p - (\alpha+1)) + q \bar p}.
\end{align}
Thus, if \eqref{est:k-lower} holds, we have
\begin{align*}
 \int^T_\theta\int_{\R^N} \big( u^\frac{\alpha+1}{2} - k^\frac{\alpha+1}{2}\big)_+^\frac{2 q}{\alpha + 1}\d x \d t \leq Y_j \to 0.
\end{align*}
which means that $u\leq k$ on $\R^N\times(\theta,T)$. In particular, this is true if \eqref{est:k-lower} holds with equality, which confirms \eqref{est:sup-Lq}. 

\noindent To prove \eqref{item:global_bddd_2} we note that the condition $\bar p_1 \geq \alpha+1$ is stronger than $\bar p_{\alpha+1} \geq \alpha +1$, so we may use the conclusion of \eqref{item:global_bddd_2}. Take $q\in [\alpha+1, \bar p_1)$. For $\theta \in (0,T)$ the function $v(x,t) = u(x,t+\theta)$ solves the equation on $S_{T-\theta}$ and $v\in \cap^N_{j=1} L^{p_j}(S_{T-\theta})$ so by \eqref{item:global_bddd_1} we have 
\begin{align*}
  \esssup_{\R^N\times[\tilde \theta,T-\theta]} v \leq c {\tilde \theta}^{-\frac{(N+\bar p)}{\lambda_q}}\Big( \int^{T-\theta}_{\tilde \theta/2}\int_{\R^N} v_+^q\d x \d t \Big)^\frac{\bar p}{\lambda_q}, \hspace{7mm} \tilde \theta \in (0,T-\theta).
 \end{align*}
In the previous inequality we set 
\begin{align*}
 \tilde \theta = \tau_j = (T-\theta) 2^{-j-1}, \hspace{7mm} M_j = \esssup_{[\R^N\times[\tau_j, T-\theta]} v_+,
\end{align*}
to obtain
\begin{align*}
 M_j &\leq c \tau_j^{-\frac{N+\bar p}{\lambda_q}} \Big( \int^{T-\theta}_{\tau_{j+1}}\int_{\R^N} v_+^q\d x \d t \Big)^\frac{\bar p}{\lambda_q} 
 \\
 &\leq  c (T-\theta)^{-\frac{N+\bar p}{\lambda_q}} 2^{j\frac{(N+\bar p)}{\lambda_q}} M_{j+1}^{(q-1)\frac{\bar p}{\lambda_q}}\Big( \int^{T-\theta}_0\int_{\R^N} v_+ \d x \d t \Big)^\frac{\bar p}{\lambda_q}
 \\
 &\leq \varepsilon M_{j+1} + c(\varepsilon)(T-\theta)^{-\frac{N+\bar p}{\lambda_1}}2^{j\frac{(N+\bar p)}{\lambda_1}}\Big( \int^{T-\theta}_0\int_{\R^N} v_+ \d x \d t \Big)^\frac{\bar p}{\lambda_1},
\end{align*}
where in the last step we use Young's inequality and the fact that the exponent of $M_{j+1}$ in on the second last line lies in the interval $(0,1)$ since $\bar p_1 > \alpha+1$. Iterating the last estimate we end up with
\begin{align*}
 M_0 \leq \varepsilon^n M_n + c(\varepsilon) (T-\theta)^{-\frac{N+\bar p}{\lambda_1}} \Big(\sum^n_{j=1} (2^{\frac{N+\bar p}{\lambda_1}}\varepsilon)^j \Big)\Big( \int^{T-\theta}_0\int_{\R^N} v_+ \d x \d t \Big)^\frac{\bar p}{\lambda_1}.
\end{align*}
Taking $\varepsilon$ sufficiently small and passing to the limit $n\to \infty$ we see that 
\begin{align*}
 M_0 \leq c (T-\theta)^{-\frac{N+\bar p}{\lambda_1}} \Big( \int^{T-\theta}_0\int_{\R^N} v_+ \d x \d t \Big)^\frac{\bar p}{\lambda_1}.
\end{align*}
Rephrasing the last estimate in terms of $u$ we have
\begin{align*}
  \esssup_{\R^N\times[(T-\theta)/2+\theta, T]} u_+ \leq c (T-\theta)^{-\frac{N+\bar p}{\lambda_1}} \Big( \int^{T}_\theta\int_{\R^N} u_+ \d x \d t \Big)^\frac{\bar p}{\lambda_1}.
\end{align*}
The previous argument works also if we consider $u$ on the smaller time interval $(0,\tau)$ where $\tau \in (0,T]$ and in this case the choice $\theta= \tau/2$ is valid, leading to 
\begin{align*}
  \esssup_{\R^N\times[\frac34\tau, \tau]} u_+ \leq c \tau^{-\frac{N+\bar p}{\lambda_1}} \Big( \int^{\tau}_{\tau/2}\int_{\R^N} u_+ \d x \d t \Big)^\frac{\bar p}{\lambda_1}.
\end{align*}
Similarly, we may estimate the essential supremum of $u_-$ and combining these estimates we have
\begin{align*}
  \esssup_{\R^N\times[\frac34\tau, \tau]} |u| \leq c \tau^{-\frac{N+\bar p}{\lambda_1}} \Big( \int^{\tau}_{\tau/2}\int_{\R^N} |u| \d x \d t \Big)^\frac{\bar p}{\lambda_1}.
\end{align*}
The integral over $\R^N$ can now be estimated using Lemma \ref{lem:L1L1} leading to 
\begin{align}\label{est:almost_LinfL1}
  \esssup_{\R^N\times[\frac34\tau, \tau]} |u| \leq c \tau^{-\frac{N}{\lambda_1}} \Big( \int_{\R^N} |u_0| \d x \Big)^\frac{\bar p}{\lambda_1}.
\end{align}
We show that \eqref{est:LinfL1} follows from the last estimate. If the right-hand side of \eqref{est:almost_LinfL1} is infinite, there is nothing to prove. Otherwise, denote the right-hand side as $\kappa$ and suppose on the contrary that $\esssup_{\R^N}|u(\cdot, \tau)| > \kappa$. Then we find a set $E$ of positive measure and some $\varepsilon>0$ such that $|u(\cdot, \tau)| \geq \kappa + \varepsilon$ on $E$. We may assume that $E$ is contained in a compact set and that $u(\cdot, \tau) \geq \kappa + \varepsilon$ on $E$. Then we have for $\tau' \in (\tfrac34 \tau, \tau)$
\begin{align*}
 |E|\big((\kappa+\varepsilon)^\alpha - \kappa^\alpha\big)^{1+\frac{1}{\alpha}} &= \frac{1}{\tau-\tau'}\int^\tau_{\tau'} \int_E \big((\kappa+\varepsilon)^\alpha - \kappa^\alpha \big)^{1+\frac{1}{\alpha}} \d x \d t 
 \\
 &\leq \frac{1}{\tau-\tau'}\int^\tau_{\tau'} \int_E | |u(x,\tau)|^{\alpha-1}u(x,\tau) - |u(x,t)|^{\alpha-1}u(x,t)|^{1+\frac{1}{\alpha}} \d x \d t.
\end{align*}
The last expression vanishes in the limit $\tau'\to \tau$ due to the time-condinuity of $|u|^{\alpha-1}u$ but the left-hand side is a positive constant so we have reached a contradiction.
\end{proof}

\begin{rem}
 By examining the proofs of Lemma \ref{lem:Lalpha+1_Lalpha+1}, Lemma \ref{lem:L1L1}, and Theorem \ref{thm:global_bddness} we see that one could obtain the same results with somewhat weaker assumptions on $u$. In fact, it would suffice that 
 \begin{align*}
  \lim_{r\to\infty} r^{-1}\int^T_0 \int_{K_r} |u|^{p_j} \d x \d t = 0,
 \end{align*}
for every $j \in \{1, \dots , N\}$.
\end{rem}
We mention that it is possible to adapt the method of Theorem \ref{thm:global_bddness} to obtain an equivalent of Lemma 3.1 in \cite{DiBeHer}, where the balls are replaced by rectangles in space which use the scaling of \cite{TedDeg}. Utilizing this result and the strategy of \cite{DiBeHer} might provide an alternative although technical way for proving the estimates (19) and (20) in \cite{TedDeg}. In fact, also in \cite{TedDeg} the approach of \cite{DiBeHer} is mentioned.

\section{Support of solutions}

\label{sec: supp}

\noindent In the following we show that solutions to the Cauchy problem with compactly supported initial data have compact support, with a qualitative description involving the $L_{\textnormal{loc}}^{p_i}(S_T)$-norm of the solution. Next we use the fact that solutions are compactly supported to refine the qualitative information to a quantitative one. The support is well defined, since solutions are semi-continuous. 

We obtain our estimates on the support for $L^{\bf p}$-integrable solutions, but we remark that it should be possible to extend the results even to solutions which are $L^{p_i}$-integrable in the $i$th coordinate direction only for some of the coordinates $i\in \{1,\dots,N\}$, provided that an appropriate branch of the solution is considered. See \cite{DuMoVe} for a discussion of this topic in the case $\alpha=1$. The reason why the more general case is more involved is the non-uniqueness of solutions.  A reasonable theory of existence is available (see \cite{Raviart2}, \cite{Sango}, \cite{Raviart1} for the prototype and \cite{TedDeg2} for more general initial data and \cite{Laptev} for more general operators), but in general, uniqueness fails for $L^2_{\textnormal{loc}}$ initial data, already for the $p$-Laplace equation. 
\begin{lem}\label{lem:supp-rough}
Let $u$ be an $L^{\bf p}$-integrable weak solution to the Cauchy problem \ref{prob:cauchy} in the sense of Definition \ref{def:Lp-integrable-sol}, with $u_0 \in L^{\alpha+1}(S_T)$ and $\supp (u_0) \subset \K_{R_0}$ for some $R_0>0$. Assume that for all $j =1,\dots,N$ 
\begin{equation} \label{supp-rough}
    \alpha+1<p_j<\bar{p} (1+(\alpha+1)/N)< N+\alpha+1.
\end{equation} \noindent Then there exist $d,\chi, \tilde \chi$ depending only on the data $\{N,p_i, \alpha, \Lambda\}$ specified in \eqref{rough-exp} and $\gamma>0$ depending on the data and $T$ such that for $R\ge \max\{2R_0, 1\}$ \vskip0.2cm \noindent 
\[u(\cdot, t) \equiv 0, \quad \text{in} \quad \K_{2R} \setminus \K_R \quad \forall t \in [0,\max\{\tau^*,T\}], \quad \text{where}\]
\[ \tau^*=\tau^*(u,R,T)= \gamma R^{\frac{p_1}{d}(1+\chi)} \min \Big\{    \Big[\int_0^T \int_{\K_{3R}} \sum_{i=1}^N |u|^{p_i} \, \d x \d t\Big]^{-\chi}, \, \Big[\int_0^T \int_{\K_{3R}} \sum_{i=1}^N |u|^{p_i} \, \d x \d t\Big]^{-\tilde \chi} \Big\}.\]
\end{lem}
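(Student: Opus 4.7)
The plan is to run a De Giorgi iteration localised to the exterior annulus $\K_{2R}\setminus\K_R$, exploiting the fact that $u_0$ is supported in $\K_{R_0}\subset\K_R$ (since $R\geq 2R_0$): a spatial cutoff supported in $\K_{3R}\setminus\K_R$ vanishes on $\supp u_0$ and, combined with a time cutoff that vanishes at $t=0$, kills every initial-data term in Lemma \ref{lem:Energy_Est}. The only surviving boundary contributions live on the inner shell $\K_{R+\varepsilon}\setminus\K_R$ and the outer shell $\K_{3R}\setminus\K_{2R}$, and will ultimately be absorbed into $\int_0^T\!\int_{\K_{3R}}\sum_i|u|^{p_i}\,\d x\d t$.

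Concretely, for $n\geq 0$ I would choose
\begin{align*}
 R_n=2R+R\, 2^{-n},\quad r_n=R+R\, 2^{-n-1},\quad k_n=k(1-2^{-n})^{2/(\alpha+1)},
\end{align*}
with $k>0$ eventually sent to zero; a product cutoff $\eta_n$ as in \eqref{expr:eta}, equal to one on $\K_{R_{n+1}}\setminus\K_{r_n}$ and vanishing outside $\K_{R_n}\setminus\K_{r_{n+1}}$, with $|\partial_j\eta_n^{1/p_j}|^{p_j}\leq cR^{-1}2^{np_j}$; and a time cutoff $\varphi_n$ with $\varphi_n(0)=0$, $\varphi_n\equiv 1$ on $[2^{-n}\tau^*,\tau^*]$ and $|\varphi_n'|\leq c 2^n/\tau^*$. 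Plugging $k_n$, $\eta_n$, $\varphi_n$ into Lemma \ref{lem:Energy_Est} yields a bound on the gradient and sup-in-time terms in terms of the two cutoff contributions $\sum_j\iint(u-k_n)_+^{p_j}|\partial_j\eta_n^{1/p_j}|^{p_j}\varphi_n$ and $\iint(u^{(\alpha+1)/2}-k_n^{(\alpha+1)/2})_+^2\eta_n\varphi_n'$.

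The next step is to apply the anisotropic parabolic Sobolev-Troisi inequality (Theorem \ref{PAS}) on the smaller cylinder $Q_{n+1}=(\K_{R_{n+1}}\setminus\K_{r_n})\times(2^{-n-1}\tau^*,\tau^*)$ in order to convert the gradient/sup-in-time information into an $L^\ell$-norm $Y_{n+1}$ of $(u^{(\alpha+1)/2}-k_{n+1}^{(\alpha+1)/2})_+$, where $\ell$ is chosen to match the doubly nonlinear parabolic scaling. The slow-diffusion condition $p_j>\alpha+1$ in \eqref{supp-rough} makes the level-set estimate $|Q_n\cap\{u>k_{n+1}\}|\leq ck^{-(\alpha+1)}2^{2n}Y_n$ dominate each truncated $L^{p_j}$ boundary term, while the subcritical bound $p_j<\bar p(1+(\alpha+1)/N)$ in \eqref{supp-rough} guarantees that the resulting self-improving recursion takes the form
\begin{equation*}
 Y_{n+1}\leq Cb^n\frac{1}{N}\sum_{i=1}^N Y_n^{1+\chi_i}, \qquad \chi_i>0,
\end{equation*}
with each $\chi_i$ depending explicitly on $N,\alpha,{\bf p}$, and with $C$ and $b$ depending polynomially on $R$ and $\tau^*$ through the cutoff bounds above (the smallest exponent $p_1$ governing the worst direction, which is the source of the factor $R^{-p_1/d}$ inside $C$).

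Finally, I would invoke Lemma \ref{lem:fastconvg-general} on $(Y_n)$: it forces $Y_n\to 0$ as soon as $Y_0\leq\min\{C^{-1/\chi_{\min}},C^{-1/\chi_{\max}}\}b^{-1/\chi_{\min}^2}$. Bounding $Y_0$ from above by $k^{-\mathrm{const}}\int_0^T\!\int_{\K_{3R}}\sum_i|u|^{p_i}\,\d x\d t$ (via the support of $\eta_0$) and solving the previous inequality for $\tau^*$ produces exactly $\tau^*=\gamma R^{p_1(1+\chi)/d}\min\{I^{-\chi},I^{-\tilde\chi}\}$ as in the statement: the minimum of two powers is the signature of the two distinct bounds supplied by Lemma \ref{lem:fastconvg-general} when $\chi_{\min}\neq\chi_{\max}$, and the exponent $R^{p_1/d}$ encodes the worst spatial scaling. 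Letting $k\downarrow 0$ gives $u_+\equiv 0$ on $(\K_{2R}\setminus\K_R)\times[0,\tau^*]$, and the symmetric argument on $u_-$ finishes the proof. The hard part is the simultaneous coordination of the levels $k_n$, the radii $R_n,r_n$, the time weight $\varphi_n$, and the exponent $\ell$, so that the initial-data term cancels, all Sobolev-Troisi exponents $1+\chi_i$ exceed one, and the $(R,\tau^*)$-dependence of $C$ matches exactly the scaling in the statement; the full parameter window \eqref{supp-rough} is precisely what makes this balancing feasible.
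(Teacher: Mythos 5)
Your overall architecture (shrinking annuli away from $\supp(u_0)$, an energy estimate whose initial-data term is annihilated by the spatial cutoff, the anisotropic parabolic Sobolev embedding of Theorem \ref{PAS}, and Lemma \ref{lem:fastconvg-general} with two exponents $\chi,\tilde\chi$ producing the minimum of two powers of the integral $I(R)$) matches the paper's. But two of your choices break the quantitative part of the argument. First, the truncation levels $k_n=k(1-2^{-n})^{2/(\alpha+1)}$ with $k\downarrow 0$ at the end are incompatible with the level-set estimate you invoke: the Chebyshev bound $|Q_n\cap\{u>k_{n+1}\}|\leq ck^{-(\alpha+1)}2^{2n}Y_n$ degenerates as $k\to 0$, so the constant $C$ in the recursion blows up and the smallness condition of Lemma \ref{lem:fastconvg-general} becomes vacuous exactly in the limit you need. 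The paper works at level $k=0$ throughout: it takes $f(s)=s_\pm$ (as in Lemma \ref{lem:Lalpha+1_Lalpha+1}), sets $Y_n=\int_0^t\int_{E_n}\sum_i|u|^{p_i}$ on the shrinking annuli $E_n$, and iterates only in space; no levels and no time cutoff are needed, since $\eta_n u_0\equiv 0$ already kills the initial term and $\varphi\equiv 1$ is admissible.

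Second, and more importantly, you misidentify the source of the smallness in $\tau^*$. You propose to extract the $\tau^*$-dependence from the time-cutoff derivative $|\varphi_n'|\lesssim 2^n/\tau^*$; but that makes the recursion constant $C$ proportional to a \emph{negative} power of $\tau^*$, so the condition $Y_0\leq \min\{C^{-1/\chi},C^{-1/\tilde\chi}\}b^{-1/\chi^2}$ becomes harder, not easier, to satisfy as $\tau^*\to 0$ --- the inequality you would ``solve for $\tau^*$'' points the wrong way. The correct mechanism is the factor $t^{1-\theta_i\bar p^*/\bar p}$ supplied by Theorem \ref{PAS}: choosing $\alpha_i\equiv 1$, $\sigma=\alpha+1$, $q=p_i$ forces $\theta_i=\tfrac{(N-\bar p)(p_i-\alpha-1)}{N(\bar p-\alpha-1)+\bar p(\alpha+1)}$, and the two strict inequalities in \eqref{supp-rough} are exactly $\theta_i>0$ and $\theta_i<\bar p/\bar p^*$, i.e.\ they guarantee both $\chi_i>0$ and a strictly positive exponent $d=\min_i(1-\theta_i\bar p^*/\bar p)$ on $t$ in front of the recursion. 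It is this $t^d$, together with the factor $R^{-p_1(1+\chi)}$ from the spatial cutoffs, that yields $\tau^*=\gamma R^{p_1(1+\chi)/d}\min\{I^{-\chi},I^{-\tilde\chi}\}$. (A further minor point: your time cylinders $(2^{-n-1}\tau^*,\tau^*)$ grow with $n$, so $Q_{n+1}\not\subset Q_n$ and the recursion $Y_{n+1}\lesssim Y_n^{1+\chi_i}$ cannot even be set up as written.)
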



\begin{proof}[Proof]
Let $R\geq 2R_0$ and define the domains
\[E_n= \K_{r_n}\setminus \K_{s_n}, \quad \text{for} \quad r_n=2R(1+2^{-(n+1)}), \, s_n=R(1-2^{-(n+1)}),\]
so that $E_n \subset E_{n+1}$ is an increasing sequence of sets starting from $E_0= \K_{3R}\setminus \K_{R/2}$ and ending up with $E_{\infty}= \K_{2R}\setminus \K_{R}$. Choose $\eta_n\in C_o^{\infty}(E_n; [0,1])$ as in \eqref{expr:eta} such that
\[\eta_n\equiv 1\quad \text{in} \quad E_{n+1}, \quad \text{and} \quad |\partial_i \eta_n^{1/p_i}|\leq c 2^n/R, \quad \forall i=1, \dots, N.\]
Reasoning as in the proof of Lemma \ref{lem:Lalpha+1_Lalpha+1} we obtain an equivalent of \eqref{est:u_to_alpha_plusone_positive}, where we retain the terms involving the derivatives of $u$ on the left-hand side, namely
\begin{align}\label{deashibarai}
 \int_{\R^N \times\{\tau\}} u_+^{\alpha+1}\eta_n \d x + \sum_{i=1}^N \iint_{S_\tau}  |\partial_i u_+ |^{p_i} \eta_n \varphi \, \d x \d t \leq c \sum^N_{i=1} \iint_{S_\tau} u_+^{p_i}|\partial_i \eta_n^\frac{1}{p_i}|^{p_i} \d x \d t.
\end{align}
Note also that the term involving $u_0$ which appears on the right-hand side of \eqref{est:u_to_alpha_plusone_positive} vanishes in the current case since $\eta_n=0$ on the support of $u_0$. By adding \eqref{deashibarai} and its corresponding estimate for $u_-$ we end up with
\begin{align}\label{ogoshi}
 \int_{\R^N \times\{\tau\}} |u|^{\alpha+1}\eta_n \d x + \sum_{i=1}^N \iint_{S_\tau}  |\partial_i u |^{p_i} \eta_n \varphi \, \d x \d t \leq c \sum^N_{i=1} \iint_{S_\tau} |u|^{p_i}|\partial_i \eta_n^\frac{1}{p_i}|^{p_i} \d x \d t.
\end{align}
For $i=1,\dots, N$ we apply Theorem \ref{PAS} to the compactly supported function $u\eta_n$ with 
\[\alpha_i\equiv 1, \quad \sigma = \alpha+1, \quad q_i= p_i, \quad \theta_i= \frac{(N-\bar{p})(p_i-\alpha-1)}{N(\bar{p}-\alpha-1)+\bar{p}(\alpha+1)}.\]
These parameter choices are valid since
\[\theta_i >0 \iff p_i>\alpha+1, \quad \text{and} \quad \theta_i \leq \bar{p}/\bar{p}^* \iff p_i< \bar{p} (1+(\alpha+1)/N).\]
By summing up the resulting estimates for $i=1,\dots,N$ combined with \eqref{ogoshi} and the bound for the derivatives of $\eta_n$ we get 
\begin{align}\label{piccio2}
 \notag   Y_{n+1}:=\int_0^t \int_{E_{n+1}} \sum_{i=1}^N |u|^{p_i} \, \d x \d s &\leq c \sum_{i=1}^N  t^{1-\theta_i \bar{p}^*/\bar{p}} \bigg( \sum_{j=1}^N\frac{2^{np_j}}{R^{p_j}} \int_0^t \int_{E_n} |u|^{p_j} \d x \d s      \bigg)^{(1-\theta_i)+ \theta_i \bar{p}^*/\bar{p}}
 \\
    &\leq c_T t^{d}\bigg( \frac{2^{np_N(1+\chi_{\max})}}{R^{p_1 (1+\chi)}}\bigg) \sum_{i=1}^N Y_n^{1+\chi_i},
    \end{align}
where we emphasize that the constant $c_T$ depends on $T$ and
\begin{align} \notag d=\frac{N(\bar{p}-p_N)+\bar{p}(\alpha+1)}{N(\bar{p}-\alpha-1)+\bar{p}(\alpha+1)}, \quad \chi_i &= \theta_i(\bar{p}^*/\bar{p})-1)= \frac{\bar{p} (p_i-\alpha-1)}{N(\bar{p}-\alpha-1)+\bar{p}(\alpha+1)}, 
 \\
\label{rough-exp}
\quad \tilde \chi &=\max \{\chi_i\}_i, \quad\quad \chi=\min\{\chi_i\}_i.
\end{align}
In the last step of \eqref{piccio2} we used the fact that $t/T <1$ and to conclude that
\begin{align*}
 t^{1-\theta_i \bar{p}^*/\bar{p}} = T ^{\theta_i \bar{p}^*/\bar{p}-1}\Big(\frac{t}{T}\Big)^{1-\theta_i \bar{p}^*/\bar{p}}\leq \tilde c(T) \Big(\frac{t}{T}\Big)^{\min\{1-\theta_i \bar{p}^*/\bar{p}\}} = \tilde c(T) \Big(\frac{t}{T}\Big)^d = c(T) t^d.
\end{align*}

\noindent In light of \eqref{piccio2}, we can apply Lemma \ref{lem:fastconvg-general} with the current choice of $\chi_i$s to conclude that if
\begin{equation}\label{kondizionen}
    Y_0 \leq c \min\Big\{ \Big( \frac{t^d}{R^{p_1(1+\chi)}} \Big)^{-1/\chi},\, \Big(\frac{t^d}{R^{p_1(1+\chi)}} \Big)^{-1/\tilde \chi}\Big\}
\end{equation} 
\noindent then $\lim_{n\to \infty} Y_n=0$. Since
\begin{align*}
 Y_0 \leq \int_0^T \int_{\K_{3R}} \sum_{i=1}^N |u|^{p_i}\, \d x \d s =:I(R), 
\end{align*}
the condition \eqref{kondizionen} is satisfied if 
\begin{align*}
 t \leq c R^{\frac{p_1}{d}(1+\chi)} \min \big\{I(R)^{-\chi}, I(R)^{-\tilde \chi}\big\}.
\end{align*}
Since furthermore,
\begin{align*}
 \int^t_0\int_{E_\infty} \sum_{i=1}^N |u|^{p_i}\, \d x \d s \leq Y_n,
\end{align*}
we have that  
\[u=0, \quad \text{a.e. in} \quad (0,t) \times E_{\infty}= (0,t) \times \{\K_{2R}\setminus  \K_{R}\}, \quad \forall t \, \,  \text{satisfying} \, \,  \eqref{kondizionen}.\]
\end{proof}

Now we are ready to prove the main theorem of this section.
\begin{theo} \label{thm: support}
Suppose that for all $i = 1,\dots, N$ the condition 
\begin{equation}\label{slow diffusion}
\alpha+1<p_i \leq p_N < \bar{p} ( 1+ \alpha/N)< N + \alpha
\end{equation} is satisfied. Let $u$ be an $L^{\bf p}$-integrable weak solution to the Cauchy problem \eqref{prob:cauchy} in the sense of Definition \ref{def:Lp-integrable-sol} with 
\[ u_0 \in L^{1+\alpha}(\R^N)\cap L^1(\R^N), \quad \quad \emptyset \ne \text{supp}(u_0) \subset [-R_0,R_0]^N=: \K_{R_0}.\] Then the support of $u$ evolves with the law
\begin{equation} \label{supporto}
\text{supp}(u(\cdot, t)) \subset \prod_{i=1}^N [-R_i(t), R_i(t)], \quad R_i(t)= 2R_0+ \gamma \|u_0\|_{L^1(\R^N)}^{\frac{\bar{p}(p_i-\alpha-1)}{\lambda_1 p_i}} t^{\frac{N(\bar{p}-p_i) + \bar{p}}{\lambda_1p_i}},
\end{equation}
where $\gamma$ depends only on the data.
\end{theo}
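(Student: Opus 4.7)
The strategy combines Lemma \ref{lem:supp-rough}, the ultracontractive bound \eqref{est:LinfL1}, the $L^{\alpha+1}$-conservation of Lemma \ref{lem:Lalpha+1_Lalpha+1}, and an anisotropic refinement of the De Giorgi iteration that underlies Lemma \ref{lem:supp-rough}. Since \eqref{slow diffusion} is stronger than both \eqref{bar-p_lower_bnd} and the hypothesis of Lemma \ref{lem:supp-rough}, Theorem \ref{thm:global_bddness}(2) and Lemma \ref{lem:supp-rough} are simultaneously applicable: in particular, Lemma \ref{lem:supp-rough} supplies a finite $R_\infty=R_\infty(T,\|u_0\|_{L^1})$ such that $\supp u(\cdot,t)\subset \K_{R_\infty}$ for every $t\in[0,T]$.

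For each fixed $i\in\{1,\dots,N\}$ and $R>R_0$ I would redo the iteration in the proof of Lemma \ref{lem:supp-rough} using a cut-off localized only in the $i$-th coordinate,
\[
\eta_n(x)=\zeta_n(x_i)^{p_i}\,\prod_{j\ne i}\Theta(x_j)^{p_j},
\]
where $\zeta_n\in C^\infty_o(\R;[0,1])$ vanishes for $|x_i|<s_n:=R(1-2^{-n-1})$, equals $1$ for $s_n\le|x_i|\le r_n:=2R(1+2^{-n-1})$, with $|\zeta_n'|\le c\,2^{n}R^{-1/p_i}$, and $\Theta\in C^\infty_o(\R;[0,1])$ is identically one on a neighbourhood of $[-R_\infty,R_\infty]$, so that $\Theta'$ never meets $\supp u$. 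Running the energy estimate \eqref{ogoshi} and the parabolic embedding of Theorem \ref{PAS} exactly as in the proof of Lemma \ref{lem:supp-rough}, only the derivative in direction $i$ contributes to the right-hand side, and tracking the $R$-dependence leads to an iteration of the form
\[
\widetilde Y_{n+1}\le c_T\,t^{d}\,\frac{2^{n p_i(1+\tilde\chi)}}{R^{p_i(1+\chi)}}\sum_{k=1}^{N}\widetilde Y_n^{1+\chi_k},\qquad \widetilde Y_n:=\int_0^t\!\int_{E_n^{(i)}}|u|^{p_i}\,\d x\,\d s,
\]
on the slab $E_n^{(i)}:=\{s_n<|x_i|<r_n\}\cap\{|x_j|<R_\infty:\ j\ne i\}$, with $d,\chi_k,\chi,\tilde\chi$ as in \eqref{rough-exp}. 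Lemma \ref{lem:fastconvg-general} then yields $\widetilde Y_n\to 0$ provided
\[
\widetilde Y_0\le c\,\min\Bigl\{\bigl(t^{d}R^{-p_i(1+\chi)}\bigr)^{-1/\chi},\ \bigl(t^{d}R^{-p_i(1+\chi)}\bigr)^{-1/\tilde\chi}\Bigr\}.
\]

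To close the bootstrap I would estimate $\widetilde Y_0$ by interpolating between $L^\infty$ and $L^{\alpha+1}$: combining \eqref{est:LinfL1} with Lemma \ref{lem:Lalpha+1_Lalpha+1} gives, for every $s\in(0,t)$,
\[
\int_{\R^N}|u(\cdot,s)|^{p_i}\,\d x\le \|u(\cdot,s)\|_{\infty}^{p_i-(\alpha+1)}\|u(\cdot,s)\|_{\alpha+1}^{\alpha+1}\le c\,s^{-\frac{N(p_i-\alpha-1)}{\lambda_1}}\|u_0\|_{L^1}^{\frac{\bar p(p_i-\alpha-1)}{\lambda_1}}\|u_0\|_{L^{\alpha+1}}^{\alpha+1}.
\]
Condition \eqref{slow diffusion} implies $p_i<\bar p(1+1/N)$ and hence $N(p_i-\alpha-1)<\lambda_1$, so integration in $s$ on $(0,t)$ is admissible and produces
\[
\widetilde Y_0\le c\,\|u_0\|_{L^{\alpha+1}}^{\alpha+1}\,\|u_0\|_{L^1}^{\bar p(p_i-\alpha-1)/\lambda_1}\,t^{[N(\bar p-p_i)+\bar p]/\lambda_1}.
\]
Inserting this into the smallness condition and solving for the threshold $R$ yields precisely $R_i(t)=2R_0+\gamma\|u_0\|_{L^1}^{\bar p(p_i-\alpha-1)/(\lambda_1 p_i)}t^{[N(\bar p-p_i)+\bar p]/(\lambda_1 p_i)}$. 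Applying the argument for each $i\in\{1,\dots,N\}$ produces \eqref{supporto}.

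\textbf{Main obstacle.} The delicate point is the anisotropic refinement of Lemma \ref{lem:supp-rough}: in the original proof the exponent $p_1$ of $R$ arises because every coordinate cut-off contributes a derivative, whereas here only the derivative in direction $i$ must survive, the contributions from the remaining directions being nullified by $\Theta$ through the a priori compact support of the first step. The careful bookkeeping of the exponents produced by Theorem \ref{PAS} and Lemma \ref{lem:fastconvg-general}, matching the scaling dictated by \eqref{est:LinfL1} and the $L^{\alpha+1}$-conservation, is what makes the final formula align with \eqref{supporto}. Once this anisotropic iteration is set up, the De Giorgi type fast convergence concludes.
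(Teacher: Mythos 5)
Your plan captures the right broad strategy (localize in a single coordinate, use the ultracontractive bound plus a conservation law to control the base step of a De Giorgi iteration), but it misses the single decisive algebraic ingredient of the paper's proof, and this is not a cosmetic omission: without it the exponents in \eqref{supporto} do not come out, and the resulting bound depends on a quantity not allowed in the theorem.

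The obstacle is the bound for $\widetilde Y_0$. You run the iteration with the quantity $\int_0^t\int |u|^{p_i}$, i.e.\ with the basic energy estimate corresponding to the test function $f(s)=s$. To bound $\int_{\R^N}|u(\cdot,s)|^{p_i}\d x$ in terms of a conserved quantity you are then forced to write it as $\|u\|_\infty^{p_i-(\alpha+1)}\|u\|_{\alpha+1}^{\alpha+1}$, because the naive split $\|u\|_\infty^{p_i-1}\|u\|_{L^1}$ leads to a time integral $\int_0^t s^{-N(p_i-1)/\lambda_1}\d s$ whose convergence requires $p_i < \bar p(1+1/N)-\alpha$, a condition that is \emph{not} implied by \eqref{slow diffusion} unless $\alpha\le \bar p/(N+\bar p)$. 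The consequence is that your final bound contains a factor $\|u_0\|_{L^{\alpha+1}}^{\alpha+1}$ that cannot be absorbed into a constant "depending only on the data.'' Moreover, if you push the algebra through, the exponent on $\|u_0\|_{L^1}$ turns out to be $\bar p^2(p_i-\alpha-1)^2/\bigl[\lambda_1(N(\bar p -\alpha-1)+\bar p p_i)\bigr]$ rather than $\bar p(p_i-\alpha-1)/(\lambda_1 p_i)$, and the $t$-exponent is likewise off. So the formula you claim to have "precisely'' recovered is not actually what comes out of your computation.

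What the paper does differently, and what you need, is to reduce the integrability exponent in the iterated quantity by testing with $f(s)=(s^2+\varepsilon^2)^{(\mu-1)/2}s$ for some $\mu\in[1-\alpha,1)$, so that the iteration runs on $Y_n=\int_0^T\int_{E_n}|u|^{p_j+\mu-1}\d x\d t$. Since $p_j+\mu-1\ge 1$, one can then split $\int |u|^{p_j+\mu-1}\le \|u\|_{L^1}\|u\|_\infty^{p_j+\mu-2}$, and the condition ensuring a convergent time integral becomes $\mu < \bar p(1+1/N)-p_j+1-\alpha$, which by \eqref{slow diffusion} is strictly bigger than $1-\alpha$, so an admissible $\mu$ exists. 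Most importantly, when one carries out the algebra of the smallness condition, the $\mu$-dependence cancels exactly and the resulting threshold on $\rho^{p_j}$ involves only $\|u_0\|_{L^1}^{\bar p(p_j-\alpha-1)/\lambda_1}T^{(N(\bar p -p_j)+\bar p)/\lambda_1}$, reproducing \eqref{supporto}. In parallel, the anisotropy parameters $\alpha_i=(p_i+\mu-1)/p_i<1$ in Theorem \ref{PAS} require the fractional chain rule justification around \eqref{eeesnow}, another point that does not arise in the $\mu=1$ setting you propose.

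A secondary, smaller slip: since only the derivative in direction $i$ survives your $\Theta$-cutoff, the energy term fed into Theorem \ref{PAS} is a single term, so the iteration inequality should be of the form $\widetilde Y_{n+1}\le C b^n\widetilde Y_n^{1+\chi_i}$ with a single exponent $\chi_i$, rather than the $\sum_{k=1}^N\widetilde Y_n^{1+\chi_k}$ that you copy from Lemma \ref{lem:supp-rough}. This does not create a problem by itself (it even simplifies the use of the fast-convergence lemma), but it indicates that the structure of the anisotropic iteration wasn't tracked precisely. Finally, the factor $\Theta$ supported near $[-R_\infty,R_\infty]$ is an acceptable substitute for the paper's mechanism (where the $i\ne j$ cutoffs are sent off to infinity and their derivative terms vanish by $L^{p_i}(S_T)$-integrability, after which the compact support in the remaining directions provided by Lemma \ref{lem:supp-rough} is used to make Theorem \ref{PAS} applicable), so that part of your plan is sound.
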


\begin{proof}[Proof] \hskip0.1cm
Let $\varepsilon>0$. We want to use Lemma \ref{lem:general-formula} with $f(s)= (s^2+\varepsilon^2)^{\frac{\mu-1}{2}}s$, and consequently $g(s)= (|s|^{2/\alpha} + \varepsilon^2)^{\frac{\mu-1}{2}}|s|^{\frac{1-\alpha}{\alpha}}s$, where $\mu\in (0,1)$ is to be chosen later. We also take
\begin{align*}
 G(\tau) := \int^\tau_0 g(s) \d s,
\end{align*}
and $\varphi \equiv 1$. Since  
\[
f'(u) \ge \mu (s^2+\varepsilon^2)^{\frac{\mu-1}{2}}, \quad \text{and} \quad |f(u)|^{p_i} f'(u)^{1-p_i} \leq \mu^{1-p_i} |u|^{p_i} [u^2+\varepsilon^2]^{\frac{\mu-1}{2}},
\] 
we see that \eqref{general-formula} takes the form (for $0<T^*\leq T$)
\begin{equation} \label{ciccio}
    \begin{aligned}
        \int_{\R^N}\eta(x) G(|u|^{\alpha-1}u(x, t)) & \d x \bigg|_{t=0}^{T^*} + \frac{\mu}{\gamma} \sum_{i=1}^N \iint_{S_{T^*}} (u^2+\varepsilon^2)^{\frac{\mu-1}{2}} \eta(x) |\partial_i u |^{p_i}\, \d x \d t \\
        & \leq \gamma \mu^{1-p_N} \iint_{S_{T^*}} (u^2+\varepsilon^2)^{\frac{\mu-1}{2}} \sum_{i=1}^N |u|^{p_i}\, |\partial_i \eta^{1/p_i}|^{p_i}\, \d x \d t.
    \end{aligned}
\end{equation} 
By the monotone convergence theorem we see that 
\begin{align*}
 \int_{\R^N}\eta(x) G(|u|^{\alpha-1}u(x, t))  &\d x = \int_{\R^N}\eta(x) \int^{|u(x,t)|^\alpha}_0 (s^{2/\alpha} + \varepsilon^2)^{\frac{\mu-1}{2}} s^{\frac{1}{\alpha}} \d s \d x 
 \\
 &\xrightarrow[\varepsilon\to 0]{} \int_{\R^N}\eta(x) \int^{|u(x,t)|^\alpha}_0 s^{\frac{\mu}{\alpha}} \d s \d x = \tfrac{\alpha}{\alpha+\mu} \int_{\R^N}\eta(x) |u(x,t)|^{\mu+\alpha} \d x.
\end{align*}
Similarly, we can also pass to the limit $\varepsilon\to 0$ in the other terms of \eqref{ciccio} which leads to
    \begin{align}\label{ciccio2}
        \notag \frac{\alpha}{\alpha+\mu}\int_{\R^N}\eta(x) |u(x,t)|^{\mu+\alpha} \d x \bigg|_{t=0}^{T^*} +  \frac{\mu}{\gamma} \sum_{i=1}^N &\iint_{S_{T^*}}  \eta(x) |u|^{\mu-1} |\partial_i u |^{p_i} \d x \d t  \\
        & \leq \gamma \mu^{1-p_N} \iint_{S_{T^*}}  \sum_{i=1}^N |u|^{p_i+\mu-1}\, |\partial_i \eta^{1/p_i}|^{p_i} \d x \d t.
    \end{align}
Now we use a trick of \cite{DuMoVe} by wise choice of test functions. Choose an index $j \in \{1,\dots, N\}$ and let $\eta(x)=\eta_j^{p_j}(x_j) \prod_{i\ne j} \eta_i^{p_i}(x_i)  \in C_o^{\infty}(\R^N, [0,1])$ in the following way:
\[
\eta_j \equiv 0\quad \text{on} \quad (-R_0,R_0), \qquad \text{while} \qquad \eta_i \equiv 1 \quad \text{on} \quad (-R,R), \quad \text{and} \quad |\eta_i'|\leq c/R, \text{ for } i\neq j.
\] 
Note that by the properties of $u_0$ and $\eta_j$ we have that the term on the left-hand side of \eqref{ciccio2} corresponding to $t=0$ vanishes.
We let $R\rightarrow \infty$ so that all except the $j$-th term in the sum on the right-hand side of \eqref{ciccio2} vanish:
\begin{equation} \begin{aligned}
\iint_{S_{T^*}}  &\sum_{i=1}^N |u|^{p_i+\mu-1}\, |\partial_i \eta^{1/p_i}|^{p_i} \d x \d t 
\\
& \leq \sum_{i=1, i\ne j}^N \frac{C}{R^{p_i}} \iint_{S_{T^*}}   |u|^{p_i+\mu-1} \d x \d t + \iint_{S_{T^*}} |u|^{p_j+\mu-1} |\eta_j'(x_j)|^{p_j} \d x \d t
\\
&\quad \xrightarrow[R\to\infty]{} \quad \iint_{S_{T^*}} |u|^{p_j + \mu-1} |\eta_j'(x_j)|^{p_j} \d x \d t,
\end{aligned} \end{equation}
where we use the dominated convergence theorem to pass to the limit. We also use that $u \in L^1(S_T)$ by Lemma \ref{lem:L1L1} joint with $u \in \cap_{i=1}^N L^{p_i}(S_T)$ and interpolation to say that $u \in L^{p_i+\mu-1}(S_T)$. Note that this argument requires that $1\leq p_i +\mu -1 \leq p_i$. The upper bound is obvious since $\mu<1$. The lower bound follows from the first inequality of \eqref{slow diffusion} if we require $\mu \geq 1-\alpha$. This is the first bound imposed on $\mu$. 
This leads us, by generality of $T^*$, to the estimate 
\begin{equation} \label{ciccio3}
    \begin{aligned}
        \frac{\alpha}{\alpha+\mu}\sup_{t \in [0,T]}\bigg(\int_{\R^N} \eta_j^{p_j}(x_j) |u|^{\mu+\alpha}(x,t) & \d x\bigg) + \frac{\mu}{\gamma}\sum_{i=1}^N
 \iint_{S_T} \eta_j(x_j)^{p_j} |u|^{\mu-1} |\partial_i u|^{p_i} \d x \d t    
 \\
 & \leq \gamma \mu^{1-p_N}\iint_{S_T} |u|^{p_j+\mu-1} |\eta_j'(x_j) |^{p_j} \d x \d t.
 \end{aligned}
\end{equation}
\noindent  
Now we set the iterative geometry that will permit an argument \'a la De Giorgi: Define for $\rho > 2 R_0$ and $n \in \mathbb{N}$ 
\[
\rho_n= \rho (2+ 2^{-n}), \qquad s_n= \rho(1-2^{-(n+1)}), \quad E_n= \{ x \in \R^N: \, \,  s_n \leq |x_j| \leq \rho_n \},
\] and choose test functions $\eta_{j,n}\in C^\infty_o(\R;[0,1])$ with the following behavior:
\[
\eta_{j,n} \equiv 1 \quad \text{on} \quad [s_{n+1},\rho_{n+1}], \qquad |\eta_{j,n}'| \leq C2^n/\rho, \qquad  \eta_{j,n} \in C_o^{\infty} ((s_n,\rho_n)). 
\] 
Due to the definition of $s_n$ we see that $\eta_{j,n}$ vanishes on $[-R_0,R_0]$ so it is a valid test function in \eqref{ciccio3}. The sequence of sets $E_n$ is shrinking monotonically $E_{n+1} \subset E_n$ 
\[ \text{from} \quad  E_0= \{x \in \R^N: \, \,  \rho/2 \leq |x_j| \leq 3 \rho\} \quad \text{to the set} \quad E_{\infty}=\{x \in \R^N: \, \,  \rho \leq |x_j| \leq 2\rho  \}. \]  
In order to perform a De Giorgi type iteration, we want to combine \eqref{ciccio3} with Theorem \ref{PAS}. This is possible although the functions $x\mapsto \eta_j(x_j)$ are not compactly supported since $u$ is compactly supported by Lemma \ref{lem:supp-rough}.
We apply Theorem \ref{PAS} with the choices
\[
\sigma= \mu+\alpha, \qquad q= p_j+\mu-1, \quad \text{and so} \quad  \theta=(N-\bar{p}) (p_j-\alpha-1)/ [N(\bar{p}-\alpha-1)+\bar{p}(\mu+\alpha)],
\] 
with anisotropies
\[
\alpha_i= \frac{p_i+\mu-1}{p_i}, 
\] 
In order to verify that these parameter choices are valid, we note that $\theta \in (0, \bar p/\bar p^*)$ where the lower bound follows from the first inequality in \eqref{slow diffusion} and the upper bound follows from the third inequality. In order to apply Theorem \ref{PAS} we also need that $1\leq \sigma \leq p^*_\alpha$. The lower bound follows from the condition $\mu \geq 1-\alpha$ which we have already imposed. With our choice of the numbers $\alpha_i$ the upper bound can be written in the form
\begin{align}\label{popopark}
 \mu + \alpha \leq \bar p^*\Big(1 + \frac{\mu -1}{\bar p}\Big) = \frac{N}{N-\bar p}(\bar p -1 + \mu )
\end{align}
Since $\bar p > 1 + \alpha$ by \eqref{slow diffusion}, we see that \eqref{popopark} holds for all $\mu \in (0,1)$.
Thus the parameter choices are valid if we take  $\mu\geq 1-\alpha$. When applying Theorem \ref{PAS} we will use the fact that 
\begin{align}\label{eeesnow}
 \partial_i |\eta_{j,n}^{p_j} (x_j) u|^{\alpha_i} &= \alpha_i |\eta_{j,n}^{p_j}(x_j) u|^{\alpha_i -1} \partial_i|\eta_{j,n}^{p_j}(x_j) u| 
 \\
 \notag &= \alpha_i |\eta_{j,n}^{p_j}(x_j) u|^{\alpha_i -1}\Big( \delta_{ij} p_j\eta_{j,n}^{p_j-1}(x_j)\eta_{j,n}'(x_j)|u| +  \eta_{j,n}^{p_j}(x_j) \sgn(u) \partial_i u\Big).
\end{align}
Note that since $\alpha_i<1$ it is not obvious that the derivative can be calculated in this way in accordance with the chain rule. However, \eqref{eeesnow} can be verified by applying the chain rule for Sobolev functions to $(\varepsilon + |\eta_{j,n}^{p_j}(x_j) u|)^{\alpha_i} $ and passing to the limit $\varepsilon \to 0$, using the dominated convergence theorem combined with \eqref{ciccio3}. From \eqref{eeesnow} we obtain the estimate
\begin{align}\label{btut}
 |\partial_i |\eta_{j,n}^{p_j}(x_j) u|^{\alpha_i}|^{p_i} \leq c( \delta_{ij}|\eta_{j,n}'(x_j)|^{p_j} |u|^{p_j+\mu-1} + \eta_{j,n}^{p_j}(x_j)|u|^{\mu-1}|\partial_i u|^{p_i}),
\end{align}
where we also make use of the fact that $\alpha_j p_j = p_j+\mu-1 \geq 1$ and $0\leq \eta_{j,n}\leq 1$. Finally, applying Theorem \ref{PAS} to the function $\eta_{j,n}^{p_j}(x_j) u$, making use of \eqref{btut} and combining the resulting estimates with \eqref{ciccio3} and the upper bound for the derivatives $\eta_{j,n}'$ we obtain
\begin{equation*}
\begin{aligned}
    &Y_{n+1}:=\int_0^T \int_{E_{n+1}} |u|^{p_j+\mu-1} \d x \d t 
    \\
    &\leq \int^T_0 \int_{\R^N}|\eta_{j,n}^{p_j}(x_j)u|^{p_j+\mu-1} \d x \d t
    \\
    &\leq cT^{1-\theta \bar{p}^*/\bar{p}} \Big[\sup_{t \in [0,T]}\int_{\R^N} \eta_{j,n}^{p_j(\mu+\alpha)}(x_j) |u|^{\mu+\alpha} \d x\Big]^{1-\theta}\prod_{i=1}^{N}\Big[\iint_{S_{T}}|\partial_i|\eta_{j,n}^{p_j}(x_j)u|^{\alpha_{i}}|^{p_{i}}  \d x \d t\Big]^{\frac{\theta\,  \bar p^{*}}{N \, p_{i}}}
    \\
    &\leq cT^{1-\theta \bar{p}^*/\bar{p}} \prod_{i=1}^{N}\Big[\iint_{S_{T}}\delta_{ij}|\eta_{j,n}'(x_j)|^{p_j} |u|^{p_j+\mu-1} + \eta_{j,n}^{p_j}(x_j)|u|^{\mu-1}|\partial_i u|^{p_i}  \d x \d t\Big]^{\frac{\theta\,  \bar p^{*}}{N \, p_{i}}}
    \\
    &\quad \times \Big[\sup_{t \in [0,T]}\int_{\R^N} \eta_{j,n}^{p_j}(x_j) |u|^{\mu+\alpha} \d x\Big]^{1-\theta}  
    \\
    &\leq c T^{1-\theta \bar{p}^*/\bar{p}} \bigg(\frac{2^{n p_j}}{\rho^{p_j}} \int_0^T \int_{E_n}
    |u|^{p_j+\mu-1} \d x \d t\bigg)^{(1-\theta)+ \theta \bar{p}^*/\bar{p}}
    \\
    &= c T^{1-\theta \bar{p}^*/\bar{p}} \big(
    2^{n p_j}/\rho^{p_j}\big)^{(1-\theta)+ \theta \bar{p}^*/\bar{p}} Y_n^{1+\delta}.
\end{aligned}
\end{equation*} 
In the third step we also used the fact that $\mu + \alpha \geq 1$ and that $0\leq \eta_{j,n}\leq 1$ to decrease the exponent in the first integral. In principle there is a dependence on $\mu$ in \eqref{ciccio3} but it is stable as $\mu \in [1-\alpha,1)$ so ultimately we get no direct $\mu$-dependence in the constant.
The previous estimate combined with Lemma \ref{lem:fastconvg} with $\delta= \theta(\bar{p}^*/\bar{p}-1)\in (0,1)$ shows that the sequence $Y_n$ converges to zero if 
\begin{align}\label{Y_0-desired-bound}
Y_0= \int_0^T \int_{E_0}  |u|^{p_j+\mu-1} \d x \d t \leq c T^{\frac{\theta \bar{p}^*/ \bar{p}-1}{\theta(\bar{p}^*/\bar{p}-1)}} 
(\rho^{p_j})^{\frac{1+\theta(\bar{p}^*/\bar{p}-1)}{\theta(\bar{p}^*/\bar{p}-1)}}.
\end{align}
We obtain an upper bound for $Y_0$ with the help of  Lemma \ref{lem:L1L1} and Theorem \ref{thm:global_bddness}:
\begin{equation} \label{ciccio5}
\begin{aligned}
Y_0 &\leq \iint_{S_T} |u|^{p_j+\mu-1}  \d x \d t
\\
& \leq  \int_0^T \|u(\cdot, t)\|_{L^1(\R^N)}\, \|u (\cdot, t)\|_{L^{\infty}(\R^N)}^{p_j+\mu-2} \d t 
\\
&\leq c \|u_0\|_{L^1(\R^N)} \int_0^T \|u_0\|_{L^1(\R^N)}^{(\bar{p}/\lambda_1)(p_j+\mu-2)} t^{-N(p_j+\mu-2)/\lambda_1} \, \d t
\\
&= c \|u_0\|_{L^1(\R^N)}^{1+\frac{\bar{p}(p_j+\mu-2)}{\lambda_1}} T^{1-\frac{N(p_j+\mu-2)}{\lambda_1}}, \quad \quad \lambda_1= N(\bar{p}-\alpha-1)+\bar{p}.
\end{aligned}
\end{equation} \noindent 
Here above we must request that $\mu \in (0,1)$ is so small that
\begin{align}\label{mu-upperbound}
 N(p_j+\mu-2)/\lambda_1 <1 \quad \iff \quad 
 0 <\mu < \bar{p}(1+1/N) -p_j +1-\alpha. 
\end{align}
Note that due to \eqref{slow diffusion}, the upper bound for $\mu$ in \eqref{mu-upperbound} is larger than $1-\alpha$, so this new condition is compatible with our previous requirement $\mu \geq 1-\alpha$, and a $\mu$ which satisfies both conditions can thus be chosen. From \eqref{ciccio5} it follows that \eqref{Y_0-desired-bound} is true provided that
\[
\|u_0\|_{L^1(\R^N)}^{1+\frac{\bar{p}(p_j+\mu-2)}{\lambda_1}} T^{1-\frac{N(p_j+\mu-2)}{\lambda_1}}\leq c T^{\frac{\theta \bar{p}^*/ \bar{p}-1}{\theta(\bar{p}^*/\bar{p}-1)}} 
(\rho^{p_j})^{\frac{1+\theta(\bar{p}^*/\bar{p}-1)}{\theta(\bar{p}^*/\bar{p}-1)}},
\] 
which after some algebraic manipulations can be expressed as
\begin{equation} \label{raggio}
\rho^{p_j} \ge c \|u_0\|_{L^{1}(\R^N)}^{\frac{\bar{p}(p_j-\alpha-1)}{[N(\bar{p}-\alpha-1)+\bar{p}]}} T^{\frac{N(\bar{p}-p_j) + \bar{p}}{[N(\bar{p}-\alpha-1)+\bar{p}]}},  
\end{equation} so that finally 
\[
0=Y_{\infty}= \int_0^T \int_{E_\infty}  |u|^{p_j+\mu-1} \d x \d t \quad \Rightarrow \quad u=0 \quad \text{a.e. in} \quad \{x \in \R^N \,|\,   \rho\leq |x_j| \leq 2\rho \},
\] and we are done. It is noteworthy that the final condition \eqref{raggio} does not depend on $\mu$. Repeating the same procedure for $j = 1, \dots, N$ we obtain \eqref{supporto}. 

\end{proof}

\end{document}